\theoremstyle{plain}
\newtheorem{theorem}{Theorem}[section]
\newtheorem{proposition}[theorem]{Proposition}
\theoremstyle{definition}
\newtheorem{definition}[theorem]{Definition}
\newtheorem{remark}[theorem]{Remark}
\title{An integral-representation result for continuum limits of discrete energies with multi-body interactions}
\author{Andrea Braides\\Dipartimento di Matematica, Universit\`a di Roma Tor Vergata
\\ via della ricerca scientifica 1, 00133 Roma, Italy\\ \small
{\tt e-mail:~braides@mat.uniroma2.it}\\
\\ Leonard Kreutz\\ 
Gran Sasso Science Institute, Viale F.~Crispi 7, 67100 L'Aquila, Italy \\ 
\small{\tt e-mail:~leonard.kreutz@gmail.com}
}
\date{}                                           
\begin{document}

\def\e{\varepsilon}
\def\dx{\mathrm{d}x}
\def\ZZ{{\mathbb Z}}
\def\RR{{\mathbb R}}
\maketitle

\begin{abstract}
We prove a compactness and integral-representation theorem for sequences of families of lattice energies
describing atomistic interactions defined on lattices with vanishing lattice spacing.
The densities of these energies may depend on interactions between all points of the corresponding lattice contained in a reference set. We give conditions that ensure that the limit is an integral defined on a Sobolev space. A homogenization theorem is also proved. The result is applied to multibody interactions corresponding 
to discrete Jacobian determinants and to linearizations of Lennard-Jones energies with mixtures of convex and concave quadratic pair-potentials.
\end{abstract}

{\bf Keywords:} lattice energies, discrete-to-continuum, multibody interactions, homogenization, Lennard-Jones energies

\section{Introduction}
This paper focuses on the passage from lattice theories to continuum ones in 
the framework of variational problems, such as for atomistic systems in Computational Materials Science (see e.g.~\cite{BLBL2}). For notational convenience we will state our results for energies 
defined on functions $u$ parameterized on a portion of $\mathbb{Z}^N$ (with values in $\mathbb{R}^n$),
but our assumptions may be immediately extended to more general lattices. For central interactions
such energies may be written as 
\begin{equation}\label{Enn}
E(u)= \sum_{i,j} \psi_{ij} (u_i-u_j),
\end{equation}
where $i,j$ are points in the domain under consideration. We are interested in the behaviour
of such an energy when the dimensions of the domain are much larger than the lattice spacing.
In the discrete-to-continuum approach this can be done by approximation with a continuum energy
obtained as a limit after a scaling argument. To that end, we introduce a small parameter $\e$ (which, for the
unscaled energy $E$ is the inverse of the linear dimension of the domain) and scale the energies as
\begin{equation}\label{Enne}
E_\e(u)= \sum_{i,j}\e^N \psi^\e_{ij} \Bigl({u_i-u_j\over\e}\Bigr),
\end{equation}
where now $i,j$ belong to a domain $\Omega$ independent of $\e$, and the domain of $u$ is
$\Omega\cap\e\mathbb{Z}^N$; accordingly, we set $\psi^\e_{ij}=\psi_{i/\e\,j/\e}$.
Both scalings, $\e^N$ of the energy, and $u_i/\e$ of the function, 
are important in this process and highlight that in this case we are regarding the energy as a volume 
integral ($\e^N$ being the volume element of a lattice cell) depending on a gradient (${(u_i-u_j)/\e}$
being interpreted as a scaled difference quotient or discrete gradient). Other scalings are possible 
and give rise to different types of energies, depending on the form of $\psi^\e_{ij}$, highlighting
the multiscale nature of the problem. In the present context we focus on this particular ``bulk''
scaling (for an account of other scaling limits see \cite{Hand,B-Seoul}).

The continuum approximation of $E_\e$ is obtained by taking a limit as $\e\to 0$. This has been done in different ways, using a pointwise limit in \cite{BLBL} (where lattice functions are considered as restrictions of a smooth function to $\ZZ^N$) or a $\Gamma$-limit in \cite{AC} (in this case lattice functions
are extended as piecewise-constant functions and embedded in some common Lebesgue space) to obtain an energy of the form
\begin{equation}\label{Fnn}
F(u)= \int_\Omega f(x,\nabla u)\dx
\end{equation}
with domain a Sobolev space.
We focus on the result of \cite{AC}, which relies on the localization methods of $\Gamma$-convergence
(see \cite{GCB} Chapter 12) envisaged by De Giorgi to deduce the integral form of the $\Gamma$-limit from its behaviour both as a function of $u$ and $\Omega$.
Conditions that allow to apply those methods are

(i) ({\em coerciveness}) growth conditions from below that allow to deduce that the limit is defined on some Sobolev space; e.g.~that $\psi^\e_{ij}(w)\ge c(|w|^p-1)$ for nearest-neighbours and $\psi^\e_{ij}\ge0$ for all $i,j$;

(ii) ({\em finiteness})  growth conditions from above that allow to deduce that the limit is finite on the same Sobolev space;
e.g.~that $\psi^\e_{ij}(w)\le c^\e_{ij}(|w|^p+1)$ for all $ij$, with some summability conditions on $c^\e_{ij}$
uniformly in $\e$;

(iii) ({\em vanishing non-locality}) conditions that allow to deduce that the $\Gamma$-limit is a measure in its dependence on $\Omega$. This is again obtained from some uniform decay conditions on the coefficients $c^\e_{ij}$.

Hypotheses (i)--(iii) are sharp, in the sense that failure of any of these conditions may result in a $\Gamma$-limit that cannot be represented as in (\ref{Fnn}). The result in \cite{AC} has been successful in many applications, 
among which the computation of optimal bounds for conducting networks \cite{BF}, the derivation of nonlinear elastic energies from atomistic systems \cite{AC,KLR}, of their linear counterpart \cite{BSV}, and of $Q$-tensor theories from spin interactions \cite{BCS}, numerical homogenization \cite{Gloria}, the analysis of the pile-up of dislocations \cite{Geers}, and others. Moreover, it has been extended to cover stochastic lattices \cite{ACG} and dimension-reduction problems \cite{ABC}. However, its range of applicability is restricted to pairwise interactions, which implies constraints on the possible energy densities. The main motivation of the present work is to overcome some of those limitations. More precisely, we focus on two issues:

$\bullet$ {\em the extension to the result to many-body interactions}. In principle, a point in the lattice may interact with all other points in the domain $\Omega$. As a particular case, we may think of $k$-body interactions corresponding to the minors of the lattice transformation (which is affine at the lattice level), such as the discrete determinant in two dimensions, which can be viewed as a three-point interaction. Some works in this direction are already present in the literature for particular cases \cite{BS,LR,MP};

$\bullet$ {\em the use of averaged growth conditions on the energy densities}. Some lattice energies are obtained as an approximation of non-convex long-range interactions. As such, even when considering pair interactions, they may fail to satisfy coerciveness conditions for some $\psi_{ij}$. As an example we can think of the linearization of Lennard-Jones interactions, which gives concave quadratic energies for distant $i$ and $j$.
The coerciveness of the energy can nevertheless be recovered using the fast decay of the potential so that
short-range convex interactions dominate long-range concave ones. In general, coerciveness can be obtained by substituting a growth conditions on each of the interactions with an averaged  growth condition.

\smallskip
In order to achieve the greatest generality, we assume that energy densities may indeed depend on all points in $\Omega\cap\e\ZZ^N$. An energy density $\phi^\e_i$ will describe the interaction of a point $i\in \Omega\cap\e\ZZ^N$ with all other points in the domain. This standpoint, already used in \cite{BC} for surface energies
in a simpler setting (see also \cite{BLO} in a one-dimensional setting), brings some notational complications (except for the case $\Omega=\RR^N$) since it is convenient to regard each such function as defined on a different set $(\Omega-i)\cap\e\ZZ^N$. This complication is anyhow present each time that we consider more-than-two-body interactions. The energies are then defined as
\begin{align}\label{Feppa}
F_\varepsilon(u) = \sum_{i \in \Omega\cap\e\ZZ^N} \varepsilon^N \phi_i^\varepsilon(\{u_{j+i}\}_{j \in (\Omega-i)\cap\e\ZZ^N}).
\end{align}
An important remark to make is that there are many ways to define energy densities giving the same $F_\e$.
Note for example that for central interactions as above $\phi_i^\varepsilon$ may be simply given by
\begin{equation}\label{cetra}
\phi_i^\varepsilon(\{z_j\})=\sum_{j\in  (\Omega-i)\cap\e\ZZ^N} \psi^\e_{ij}\Bigl({z_j-z_0\over \e}\Bigr)
=\sum_{j\in  (\Omega-i)\cap\e\ZZ^N} \psi_{i/\e\,j/\e}\Bigl({z_j-z_0\over \e}\Bigr),
\end{equation}
but the interactions may also be regrouped differently and in principle $\phi_i^\varepsilon$ may 
include some  $\psi^\e_{kj}$ with $k\neq i$. This is important in order to allow that some $\psi^\e_{ij}$ be unbounded from below, up to satisfying a lower bound when considered together with the other interactions.

The set of hypotheses we are going to list for $\phi^\e_{ij}$ will allow to treat a larger class of energies than those of the form (\ref{Enne}), but they must be stated with some care. The precise statements are given in Section \ref{MT}. Here we give a simplified description as follows:

(o) ({\em translational invariance in the codomain}) $\phi_i^\varepsilon(\{z_j+w\})= \phi_i^\varepsilon(\{z_j\})$
for all $i$, $\{z_j\}$ and vector $w$. This condition is automatically satisfied for interactions depending on differences $z_i-z_j$;

(i) ({\em coerciveness}) the energy must be estimated from below by a nearest-neighbour pair energy
and $\phi^\e_{i}\ge0$ for all $i$. This condition is less restrictive than the corresponding one for pair interactions since it refers to an already averaged energy density;

(ii) ({\em Cauchy-Born hypothesis}) we assume a polynomial upper bound for $F_\e(u)$ only when $u$ is linear.
For energy densities as in (\ref{cetra}) this in general rewritten in terms of $\psi_{ij}$ as 
\begin{equation}
\Psi(M):=\sum_{j} \psi_{i\,i+j}(Mj)\le C(1+|M|^p),
\end{equation}
for all $i\in \ZZ^N$, and all $n\times N$ matrices $M$.
This condition is in principle weaker than the finiteness property (ii) for pair interactions. Examining this condition separately goes in the direction of analyzing first pointwise convergence (as in \cite{BLBL}) and then $\Gamma$-convergence;

(iii) ({\em vanishing non-locality}) we assume that if $u=v$ on a square of centre $i$ and side-length $\delta$ then 
$$
\phi_i^\varepsilon(\{u_{j+i}\}_{j \in (\Omega-i)\cap\e\ZZ^N})\leq
\phi_i^\varepsilon(\{v_{j+i}\}_{j \in (\Omega-i)\cap\e\ZZ^N})+r(\e,\delta,\|\nabla u\|_p)
$$
($u$ is identified with a piecewise-affine interpolation), where the rest $r$ is negligible as $\e\to0$ for $\|\nabla u\|_p$ bounded.
Note that this condition is automatically satisfied with $r=0$ if the range of the interactions is finite,
and can be deduced from the corresponding condition (iii) for central interactions;

(iv) ({\em controlled non-convexity}) a final condition must be added to ensure that the limit be a measure as a function of $\Omega$.  For central interactions, this condition is hidden in the previous (i) and (ii), which imply a 
convex growth condition on $\Psi$; more precisely a polynomial growth of the form
$$
c(|M|^p-1)\le \Psi(M)\le C(1+|M|^p).
$$
This double inequality allows to use classical convex-combination arguments with cut-off functions even though $\Psi$ may not be convex. In our case this compatibility with convex arguments must be required separately, and is formalized in condition (H5) in Section \ref{HED}.

Under the conditions above we again deduce that $\Gamma$-limits of energies $F_\e$ are integral functionals $F$ as in (\ref{Fnn}) defined on a Sobolev space. The integrand $f$ can be described by a derivation formula, which is allowed by the study of suitably defined boundary-value problems. This derivation formula can also be used to prove a periodic-homogenization result. In the generality of energies possibly depending on the interaction of all points in $\Omega$ some care must be used to define periodicity for the energy densities. In the case of finite-range interactions we require that in the interior of $\Omega$ we have $\phi^\e_i=\phi_{\e/i}$, where $\phi_k$ 
is periodic in $k$. For infinite-range interactions the definition is given by approximation with periodic energy densities with  finite-range interactions.

\smallskip
The paper is organized as follows. After some notation, in Section \ref{MT} we rigorously state the hypotheses outlined above and prove the main compactness and integral-representation theorem. Section \ref{DBT} is devoted to formalizing and proving the convergence of Dirichlet boundary-value problems, which is used in the following Section \ref{HOM} to state and derive a homogenization formula. Finally, Section \ref{EXA} is devoted to examples. More precisely, we show how our hypotheses are satisfied by functions depending on discrete determinants and by a linearization of Lennard-Jones energies mixing convex and concave quadratic pair energy densities.
Finally, in the same section we recover the result in \cite{AC} as a particular case of our main theorem.

\section{Notation and preliminaries}
We denote by $\Omega$ an open and bounded subset of $\mathbb{R}^N$ with Lipschitz boundary. We set $Q$ to be the unit cube with sides orthogonal to the canonical orthonormal basis $\{e_1,\ldots,e_N\}$, $Q=\{x \in \mathbb{R}^N : |\langle x, e_i \rangle| \leq \frac{1}{2}, \text{ for all } i =1,\ldots,N\}$ and for $\delta>0$ we define $Q_\delta=\delta Q$. Moreover, for $x \in \mathbb{R}^N$ we set $Q(x) =Q +x$ and $Q_\delta(x) = Q_\delta + x$.
 We set $\mathcal{A}(\Omega) =\{A \subset \Omega : A \text{ open}\}$, $\mathcal{A}^{reg}(\Omega) = \{ A \in \mathcal{A}(\Omega) : \partial A \text{ Lipschitz}\}$, and for $\delta >0$ set
$ A_\delta = \{x \in \Omega : \mathrm{dist}_\infty(x,A) < \delta\} $ and $A^\delta = \{x \in A : \mathrm{dist}_\infty(x,A^c) > \delta\} $. For $B \subset \mathbb{R}^N$ we write $|B| $ for the $N$-dimensional Lebesgue measure of $B$. For a vector $x \in \mathbb{R}^N$ we set
\begin{align*}
\lfloor x\rfloor = (\lfloor x_1\rfloor, \ldots, \lfloor x_N \rfloor).
\end{align*}
We define for $u : \mathbb{R}^N \to \mathbb{R}^n, \xi \in \mathbb{Z}^N, x \in \mathbb{R}^N$ and $\varepsilon >0$
\begin{align*}
D^\xi_\varepsilon u(x) := \frac{u(x+\varepsilon\xi)-u(x)}{\varepsilon|\xi|}
\end{align*}
the discrete difference quotient of $u$ at $x$ in direction $\xi$. 

For a function $u$ we set $C(u)$ to be a constant depending on $u$, the dimension and its domain of definition and which may vary from line to line.

\smallskip\noindent
{\bf Slicing.}
We recall the standard notation for slicing arguments (see \cite{AFP}). Let $\xi \in S^{N-1}$, and let $\Pi_\xi = \{y \in \mathbb{R}^N : \langle y, \xi \rangle = 0\}$ be the linear hyperplane orthogonal to $\xi$. If $y \in \Pi_\xi$ and $E \subset \mathbb{R}^N$ we define $E_\xi =\{y \in \Pi_\xi \text{ such that }\exists t \in \mathbb{R} : y +t\xi\in E\}$ and $E_y^\xi =\{t\in \mathbb{R} : y+t\xi \in E\}$. Moreover, if $u:E \to \mathbb{R}^n$ we set $u_{\xi,y} : E_y^\xi  \to \mathbb{R}^n$ to $u_{\xi,y}(t) = u(y + t\xi)$.

\smallskip\noindent{\bf $\Gamma$-convergence.}
A sequence of functionals $F_n : L^p(\Omega;\mathbb{R}^n) \to [0,+\infty]$ is said to $\Gamma$-converge to a functional $F : L^p(\Omega;\mathbb{R}^n) \to [0,+\infty]$ at $u \in L^p(\Omega;\mathbb{R}^n)$ as $n \to \infty$ and we write $\displaystyle F(u) = \Gamma\text{-}\lim_{n \to \infty} F_n(u)$ if the following two conditions are satisfied:
\begin{itemize}
\item[(i)] For every $u_n $ converging to $ u$ in $L^p(\Omega;\mathbb{R}^n)$ we have
$\displaystyle
\liminf_{n \to \infty} F_n(u_n) \geq F(u).
$
\item[(ii)] There exists a sequence $\{u_n\}_n \subset L^p(\Omega;\mathbb{R}^n)$ converging to $u$ in $L^p(\Omega;\mathbb{R}^n)$ such that $\displaystyle
\limsup_{n \to \infty} F_n(u_n) \leq F(u).
$
\end{itemize}
We say that $F_n$ $\Gamma$-converges to $F$ if $\displaystyle F(u) = \Gamma\text{-}\lim_{n\to \infty}F_n(u)$ for all $u \in L^p(\Omega;\mathbb{R}^n)$. 

If $\{F_\varepsilon\}_{\varepsilon >0}$ is a family of functionals indexed by a continuous parameter $\varepsilon >0$ we say that $F_\varepsilon$ $\Gamma$-converges to $F$ as $\varepsilon \to 0^+$ if for all $\varepsilon_n \to 0$ we have that $F_{\varepsilon_n}$ $\Gamma$-converges to $F$. We define the $\Gamma$-$\liminf$ $F' : L^p(\Omega;\mathbb{R}^n) \to [0,\infty]$ and the $\Gamma$-$\limsup$ $F'' : L^p(\Omega;\mathbb{R}^n) \to [0,\infty]$ respectively by 
\begin{align*}
&F'(u)= \Gamma\text{-}\liminf_{\varepsilon \to 0} F_\varepsilon(u) = \inf \Big\{\liminf_{\varepsilon \to 0} F_\varepsilon(u_\varepsilon) : u_\varepsilon \to u\Big\}, \\&F''(u)= \Gamma\text{-}\limsup_{\varepsilon \to 0} F_\varepsilon(u) = \inf \Big\{\limsup_{\varepsilon \to 0} F_\varepsilon(u_\varepsilon) : u_\varepsilon \to u\Big\}.
\end{align*}
Note that the functionals $F'$,$F''$ are lower semicontinuous and $F_\varepsilon$ $\Gamma$-converges to $F$ as $\varepsilon \to 0^+$ if and only if $F =F'=F''$.

\smallskip\noindent
{\bf Lattice functions.} 
For  $A \in \mathcal{A}(\Omega), $ we set  $Z_\varepsilon(A) = \varepsilon\mathbb{Z}^N \cap A$
We set $\mathcal{A}_\varepsilon(\Omega,\mathbb{R}^n) :=\left\{ u :Z_\varepsilon(\Omega) \to \mathbb{R}^n\right\} $. 
\begin{definition}(Convergence of discrete functions) Functions $u \in \mathcal{A}_\varepsilon(\Omega;\mathbb{R}^n)$ can be interpreted by functions belonging to the space $L^p(\Omega;\mathbb{R}^n)$ by setting (with slight abuse of notation) $u(z)=0 $ for all $z \in Z_\varepsilon(\Omega^c)$ and
\begin{align*}
u(x) = u(z^\varepsilon_x)
\end{align*}
where $z^\varepsilon_x$ is the closest point of $Z_\varepsilon(\mathbb{R}^N)$ to $x$ (which is uniquely defined up to a set of measure $0$). We then say that $u_\varepsilon \to u$ in $L^p(\Omega;\mathbb{R}^n)$ if the interpolations of $u_\varepsilon$ converge to $u$ in $L^p(\Omega;\mathbb{R}^n)$.
\end{definition}

\smallskip\noindent
{\bf Integral representation.} We will use the following integral representation result (see \cite{BDF}).
\begin{theorem}\label{Theorem1} Let $F : W^{1,p}(\Omega;\mathbb{R}^n) \times \mathcal{A}(\Omega) \to [0,+\infty]$ satisfy the following properties
\begin{itemize}
\item[i)](measure property) For every $u \in W^{1,p}(\Omega;\mathbb{R}^n)$ we have that $F(u,\cdot)$ is the restriction of a Radon measure to the open sets. 
\item[ii)] (lower semicontinuity) For every $A \in \mathcal{A}(\Omega)$ we have that $F(\cdot,A)$ is weakly-$W^{1,1}(\Omega;\mathbb{R}^n)$ lower semicontinuous.
\item[iii)] (bounds) For every $(u,A) \in W^{1,p}(\Omega;\mathbb{R}^n) \times \mathcal{A}(\Omega)$ it holds that
\begin{equation*}
0 \leq F(u,A) \leq C\left(\int_{A} \left|\nabla u\right|^p\mathrm{d}x + |A| \right)
\end{equation*}
\item[iv)](translational invariance) For every $(u,A) \in W^{1,p}(\Omega;\mathbb{R}^n)\times \mathcal{A}(\Omega)$ and for every $c \in \mathbb{R}^n$ it holds $F(u,A) = F(u + c,A)$.
\item[v)](locality) For every $A \in \mathcal{A}(\Omega)$ and every $u,v \in W^{1,p}(\Omega;\mathbb{R}^n)$ such that $u=v$ a.e. in $A$, we have that $F(u,A)=F(v,A)$.
\end{itemize}
Then there exists a Carath\` eodory function $f: \Omega \times \mathbb{R}^{n \times N} \to [0,+\infty]$ such that 
\begin{equation*}
F(u,A) = \int_A f(x,\nabla u) \mathrm{d}x
\end{equation*}
for every $(u,A) \in W^{1,p}(\Omega;\mathbb{R}^n) \times \mathcal{A}(\Omega)$. 
\begin{itemize}
\item[vi)] (translational invariance in $x$) if for every $M \in \mathbb{R}^{n\times N},$ $z,y \in \Omega$ and for every $\rho >0$ such that $Q_\rho(z) \cup Q_\rho(y) \subset \Omega$ we have that
\begin{align*}
F(Mx,Q_\rho(y)) = F(Mx,Q_\rho(z)),
\end{align*}
then $f$ does not depend on $x$.
\end{itemize}

\end{theorem}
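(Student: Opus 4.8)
The plan is to follow the localization/blow-up method of De Giorgi and Buttazzo--Dal Maso: reconstruct $f$ by differentiating the measures $A\mapsto F(Mx,A)$ with respect to $\mathcal{L}^N$, and then extend the resulting representation from affine to piecewise-affine to all Sobolev functions. Fix a countable dense $\mathbb{Q}$-subspace $D\subset\mathbb{R}^{n\times N}$ and for $M\in D$ set $\ell_M(x)=Mx$. By (i) the set function $\mu_M:=F(\ell_M,\cdot)$ is (the restriction to $\mathcal{A}(\Omega)$ of) a Radon measure, and by (iii) $\mu_M(A)\le C(1+|M|^p)|A|$, so $\mu_M\ll\mathcal{L}^N$; by the Besicovitch differentiation theorem the limit $f(x,M):=\lim_{\rho\to 0^+}\mu_M(Q_\rho(x))/|Q_\rho(x)|$ exists for $\mathcal{L}^N$-a.e.\ $x$, defines there a Borel function of $x$ with $0\le f(x,M)\le C(1+|M|^p)$, and satisfies $F(\ell_M,A)=\int_A f(x,M)\,\mathrm{d}x$ for all $A\in\mathcal{A}(\Omega)$. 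Let $\Omega_0$ be the full-measure set of points $x_0$ that are simultaneously Besicovitch points for all $\mu_M$, $M\in D$, and Lebesgue points of all the (countably many) functions entering the next step.

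The crucial step is to upgrade $f(x_0,\cdot)$ from a function defined on $D$ to a continuous --- in fact quasiconvex --- function of $M$, for a.e.\ $x_0$; this is where hypothesis (ii) is used, the remaining hypotheses carrying no information on how $F$ depends on the gradient. Note first that for any piecewise-affine $w$ with gradient values in $D$ one has $F(w,A)=\int_A f(x,\nabla w)\,\mathrm{d}x$, by splitting $A$ along the polyhedra on which $\nabla w$ is constant, applying (v) and (iv) on each polyhedron to reduce to an affine function, and discarding the skeleton, which is $F(w,\cdot)$-null by (iii). Now fix $M\in D$ and a $Q$-periodic piecewise-affine $\varphi$ of mean zero with $\nabla\varphi$ taking finitely many values in $D$, and let $u_j(x)=Mx+\tfrac1j\varphi(jx)$: then $u_j\to\ell_M$ weakly in $W^{1,1}$ and boundedly in $W^{1,\infty}$, so (ii) gives $F(\ell_M,Q_\rho(x_0))\le\liminf_j F(u_j,Q_\rho(x_0))$, while the previous observation and the equidistribution of $\nabla\varphi(j\cdot)$ (together with the boundedness of $f(\cdot,M')$) give $F(u_j,Q_\rho(x_0))=\int_{Q_\rho(x_0)}f(x,M+\nabla\varphi(jx))\,\mathrm{d}x\to\int_{Q_\rho(x_0)}\big(\int_Q f(x,M+\nabla\varphi(y))\,\mathrm{d}y\big)\,\mathrm{d}x$. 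Dividing by $|Q_\rho(x_0)|$ and letting $\rho\to 0$ yields, for a.e.\ $x_0$, the quasiconvexity inequality $f(x_0,M)\le\int_Q f(x_0,M+\nabla\varphi(y))\,\mathrm{d}y$. Taking $\varphi$ among simple laminates, these inequalities force $f(x_0,\cdot)|_D$ to be separately convex; being also bounded above by $C(1+|M|^p)$ it extends uniquely to a locally Lipschitz, separately convex and quasiconvex function on $\mathbb{R}^{n\times N}$ --- still written $f$ --- with $|f(x_0,M)-f(x_0,M')|\le C(1+|M|^{p-1}+|M'|^{p-1})|M-M'|$. Hence $f$ is Carath\'eodory.

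With $f$ Carath\'eodory the representation follows by the standard density argument. For $u$ affine with arbitrary gradient $M$, pick $D\ni M_k\to M$; then $\ell_{M_k}\to\ell_M$ strongly in $W^{1,p}$, so by (ii) and dominated convergence $F(\ell_M,A)\le\liminf_k F(\ell_{M_k},A)=\int_A f(x,M)\,\mathrm{d}x$. For $u$ piecewise affine this gives, splitting along the polyhedra and using (v), (iv), $F(u,A)\le\int_A f(x,\nabla u)\,\mathrm{d}x$; for general $u\in W^{1,p}$, approximating by piecewise-affine $u_j\to u$ in $W^{1,p}$ and invoking (ii) once more yields $F(u,A)\le\int_A f(x,\nabla u)\,\mathrm{d}x$. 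The opposite inequality holds for every $u$ by blow-up: $F(u,\cdot)$ is a measure (i), absolutely continuous with density $g_u\in L^1$ (iii), and at a.e.\ $x_0$ --- a Lebesgue point of $\nabla u$ and of $|\nabla u|^p$, an $L^p$-differentiability point of $u$, and a point where $g_u(x_0)=\lim_\rho F(u,Q_\rho(x_0))/|Q_\rho(x_0)|$ --- one replaces $u$ on $Q_\rho(x_0)$ by its linearization $x\mapsto u(x_0)+\nabla u(x_0)(x-x_0)$ through a cut-off, estimates the transition annulus by (iii), compares with $\ell_{M_k}$ for $M_k\in D$, $M_k\to\nabla u(x_0)$, and lets in order $\rho\to 0$, $k\to\infty$ (using continuity of $f$) and the cut-off width to $0$; this gives $g_u(x_0)\ge f(x_0,\nabla u(x_0))$, whence $F(u,A)=\int_A g_u\,\mathrm{d}x\ge\int_A f(x,\nabla u)\,\mathrm{d}x$. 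Combining the two inequalities proves $F(u,A)=\int_A f(x,\nabla u)\,\mathrm{d}x$.

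Finally, (vi): if $F(\ell_M,Q_\rho(y))=F(\ell_M,Q_\rho(z))$ for all cubes contained in $\Omega$, then $\mu_M(Q_\rho(y))$ does not depend on $y$, so its density $f(\cdot,M)$ equals a constant $c_M$ a.e.\ for each $M\in D$; by the continuity in $M$ just established, $f(\cdot,M)$ is a.e.\ constant for every $M$, and after modification on a null set $f=f(M)$. I expect the real obstacle to be the second step: since all the quantitative information in (iii)--(v) concerns $F$ at a fixed function, any regularity of $M\mapsto f(x_0,M)$ must be manufactured from lower semicontinuity alone --- equivalently, one must recognize that (ii) forces the (a priori merely Borel) integrand to be quasiconvex, and then use the classical fact that quasiconvex functions of $p$-growth are locally Lipschitz; the care needed there is in choosing the good points $x_0$ so that the homogenization limit in $j$ and the blow-up limit in $\rho$ can both be taken.
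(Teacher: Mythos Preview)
The paper does not prove this theorem: it is quoted as a known integral-representation result with a reference to \cite{BDF} (Braides--Defranceschi), and is then used as a black box in the proof of Theorem~\ref{Compactness}. So there is no ``paper's own proof'' to compare against.

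Your outline is the classical Buttazzo--Dal~Maso localization argument underlying the cited reference: recover $f(\cdot,M)$ as the Radon--Nikod\'ym density of $A\mapsto F(\ell_M,A)$, use weak lower semicontinuity with oscillating piecewise-affine sequences to force rank-one convexity (hence, with the $p$-growth bound, local Lipschitz continuity in $M$), and pass from affine to general $u$ by density. That is the right skeleton, and your treatment of (vi) is correct.

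The one place where your sketch is genuinely thin is the lower bound $F(u,A)\ge\int_A f(x,\nabla u)\,\mathrm{d}x$ for general $u$. As written, the ``blow-up with cut-off'' paragraph does not make clear which inequality you are extracting and from which hypothesis: you need to manufacture, from $F(u,Q_\rho(x_0))$, a competitor for $F(\ell_{M_k},Q_{(1-\delta)\rho}(x_0))$ (or vice versa), and the only tools available are locality, translation invariance, the measure property and the growth bound on the annulus --- lower semicontinuity runs the wrong way here. In the references this is handled either by working from the outset with the Dirichlet minima $m(u_M,Q_\rho(x_0))=\inf\{F(v,Q_\rho(x_0)):v-\ell_M\in W^{1,p}_0\}$ (Buttazzo--Dal~Maso), or by first extending the identity $F(\ell_M,A)=\int_A f(x,M)\,\mathrm{d}x$ from $M\in D$ to all $M$ (showing that the density itself is rank-one convex, hence continuous, in $M$) and only then running the cut-off comparison. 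Either route closes the gap; you should indicate which one you take and why the transition-annulus error is $o(\rho^N)$ at the chosen good points.
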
 

\section{The main result}\label{MT}
For all $i\in\Omega$, we denote by $\Omega_i = \Omega - i$ the translation of the set $\Omega$ with $i$ at the origin, and we consider a function $\phi_i^\varepsilon :\left(\mathbb{R}^n\right)^{Z_\varepsilon(\Omega_i)} \to [0,+\infty)$. Let $F_\varepsilon : \mathcal{A}_\varepsilon(\Omega,\mathbb{R}^n) \times\mathcal{A}(\Omega) \to [0,+\infty)$ be defined by
\begin{align}\label{DefEnergy}
F_\varepsilon(u,A) = \sum_{i \in Z_\varepsilon(A)} \varepsilon^N \phi_i^\varepsilon(\{u_{j+i}\}_{j \in Z_\varepsilon (\Omega_i)}).
\end{align}
In this section we give hypothesis on the energy densities $\phi_i^\varepsilon$ in order to ensure that the $\Gamma$-limit of the energies defined in (\ref{DefEnergy}) be finite only on $W^{1,p}(A,\mathbb{R}^n) \cap L^p(\Omega;\mathbb{R}^n)$ and there exists a Carath\'eodory function $f : \Omega \times \mathbb{R}^{n\times N} \to [0,\infty)$ such that
\begin{align}\label{Gradientlimit}
F(u,A) = \int_{A}f(x,\nabla u(x))\mathrm{d}x
\end{align}
for all $(u,A) \in W^{1,p}(A,\mathbb{R}^n) \cap L^p(\Omega;\mathbb{R}^n)\times \mathcal{A}(\Omega)$.
A corresponding problem on the continuum is one of the first formalized in the theory of $\Gamma$-convergence, when $F_\varepsilon$ themselves are integral energies. In that approach integral functionals are interpreted as depending on a pair $(u,A)$ with u a Sobolev function and $A$ a subset of $\Omega$, when the integration is performed on $A$ only. The compactness property of $\Gamma$-convergence then ensures that a $\Gamma$-converging subsequence exits on a dense family of open sets by a simple diagonal argument. Showing that the dependence of the limit on the set variable is that of a regular measure, the convergence is extended to a larger family of sets, and an integral representation result can be applied. The type of conditions singled out in that case can be adapted to the discrete setting, taking into account that discrete energies are ``nonlocal'' in nature since they depend on the interactions of points at a finite distance. The locality of the limit energy $F$ must then be assured by a requirement of ``vanishing nonlocality'' as $\varepsilon\to 0$.

\subsection{Hypotheses on the energy densities}\label{HED}
A first requirement is that $F_\varepsilon$ be invariant under addition of constants to $u$; namely

\smallskip 

(H1) ({\em translational invariance}) for all $w\in \mathbb{R}^n$ we have
\begin{equation}
 \phi^\varepsilon_i(\{z_{j}+w\}_{j \in Z_\varepsilon(\Omega_i)})= \phi^\varepsilon_i(\{z_{j}\}_{j \in Z_\varepsilon(\Omega_i)})
\end{equation}
for all $\varepsilon>0$, $i\in Z_\varepsilon(\Omega)$ and $z:Z_\varepsilon(\Omega)\to\mathbb{R}^n$.

\smallskip

A second requirement is that $F_\varepsilon(u_\varepsilon)$ be finite if $\widehat u_\varepsilon$ are a discretization of a $W^{1,p}$ function. In particular this should hold for affine functions.

\smallskip

(H2) ({\em upper bound for the Cauchy-Born approximation}) there exists $C >0$, such that for every $M \in \mathbb{R}^{n\times N}$ and $M x (i) = M i$ we have
\begin{align} \label{Cauchy-Born}
\phi_i^\varepsilon(\{(M x)_{j}\}_{j \in Z_\varepsilon(\Omega_i)}) \leq C(|M|^p+1)
\end{align}
for all $\varepsilon >0$ and all $i \in Z_\varepsilon(\Omega)$.

\smallskip

We then also require that the limit domain be exactly $W^{1,p}$ functions, with $p>1$.
To that end a coerciveness condition should be imposed. 

\smallskip
(H3)  ({\em equi-coerciveness})  there exists $c>0$ such that
\begin{equation}
c\Bigl(\sum_{n=1}^N\left|D^{e_n}_\varepsilon z(0)\right|^p -1\Bigr)\le \phi^\varepsilon_i(\{z_{j}\}_{j \in Z_\varepsilon(\Omega_i)})
\end{equation}
for all $\varepsilon$ and $i$ such that $i+e_n\in Z_\varepsilon(\Omega)$ for all $n \in \{1,\cdots, N\}$.

\smallskip

Next, we have to impose that the approximating continuum energy be local. Indeed,
in principle discrete interactions are non-local, in that they take into account nodes of the lattice at a finite distance. This condition ensures that we can always find recovery sequences for a set $A \in \mathcal{A}(\Omega)$ that will not oscillate too much a finite distance away from $A$. We expect the limit to depend on $\nabla u$ if only the interactions for small distances are relevant, or, in other words, if the decay of interactions is fast enough. This can be formulated otherwise: we may require that the overall effect of long-range interactions at a point decay sufficiently fast as follows.

\smallskip

(H4) ({\em decaying non-locality})  There exist $\{C_{\varepsilon,\delta}^{j,\xi}\}_{\varepsilon >0,\delta>0,j \in \varepsilon\mathbb{Z}^N, \xi \in \mathbb{Z}^N}$, $C^{j,\xi}_{\varepsilon,\delta} \geq 0$ satisfying 
\begin{align}\label{Assumptions on Cj}
 \limsup_{\varepsilon \to 0}  \sum_{j \in Z_\varepsilon(\mathbb{R}^N),\xi \in \mathbb{Z}^N} C^{j,\xi}_{\varepsilon,\delta} =0 \quad \forall \delta >0
\end{align}
such that for all $\delta >0$, $z,w \in \mathcal{A}_\varepsilon(\Omega,\mathbb{R}^n)$ satisfying $z(j) =w(j) $ for all $j \in Z_\varepsilon(Q_\delta(i))$ we have 
\begin{align*}
\phi_i^\varepsilon(\{z_{j}\}_{j \in Z_\varepsilon(\Omega_i)}) \leq &  \phi_i^\varepsilon(\{w_{j}\}_{j \in Z_\varepsilon(\Omega)}) + \underset{j+\varepsilon\xi \in Z_\varepsilon(\Omega_i)}{\sum_{j \in Z_\varepsilon(\Omega_i),\xi \in \mathbb{Z}^N}}C^{j,\xi}_{\varepsilon,\delta} \left(|D^\xi_\varepsilon z(j)|^p +1\right)  .
\end{align*}

The final condition is the most technical and derives from our requirement that the limit can be expressed in terms of an integral. This is the most restrictive in the vectorial case $d >1$ where convexity conditions have to be relaxed. A function $\psi : Z_\varepsilon(\Omega) \to \mathbb{R}$ is called a {\em cut-off function} if $0 \leq \psi \leq 1$.

\smallskip

(H5) ({\em controlled non-convexity}) There exist $C>0$ and $\{C_{\varepsilon}^{j,\xi}\}_{\varepsilon >0,j \in \varepsilon\mathbb{Z}^N, \xi \in \mathbb{Z}^N}$, $C^{j,\xi}_{\varepsilon} \geq 0$ satisfying 
\begin{align}\label{Assumptions on Cxi}
\limsup_{\varepsilon \to 0} \sum_{j \in Z_\varepsilon (\mathbb{R}^N),\xi \in \mathbb{Z}^N} C^{j,\xi}_{\varepsilon} <+\infty,\quad \forall \, \delta >0 \text{ we have } \limsup_{\varepsilon \to 0}  \sum_{\max\{\varepsilon|\xi|,|j|\} >\delta} C^{j,\xi}_{\varepsilon} =0
\end{align} 
such that for all $z,w \in \mathcal{A}_\varepsilon(\Omega,\mathbb{R}^n)$ 
 and $\psi$ cut-off functions we have
\begin{align*}
\phi_i^\varepsilon(\{\psi_{j}z_{j} + (1-\psi_{j})w_{j}\}_{j \in Z_\varepsilon(\Omega_i)}) \leq & C \left( \phi_i^\varepsilon( \{z_{j}\}_{j \in Z_\varepsilon(\Omega_i)}) + \phi_i^\varepsilon( \{w_{j}\}_{j \in Z_\varepsilon(\Omega_i)})\right) \\&+R^\varepsilon_i(z,w,\psi)
\end{align*}
where
\begin{align*}
R^\varepsilon_i(z,w,\psi) = &\underset{j+\varepsilon\xi \in Z_\varepsilon(\Omega_i)} {\sum_{j \in Z_\varepsilon(\Omega_i),\xi \in \mathbb{Z}^N}} C_{\varepsilon}^{j,\xi} \Big((\underset{n \in \{1,\ldots,N\}}{\sup_{k \in Z_\varepsilon(\Omega_i)}} |D^{e_n}_\varepsilon \psi(k)|^p+1) |z(j+\varepsilon\xi)-w(j+\varepsilon\xi)|^p\Big) \\+&\underset{j+\varepsilon\xi \in Z_\varepsilon(\Omega_i)} {\sum_{j \in Z_\varepsilon(\Omega_i),\xi \in \mathbb{Z}^N}} C_{\varepsilon}^{j,\xi}\Big(  |D^\xi_\varepsilon z(j)|^p + |D^\xi_\varepsilon w(j)|^p +1 \Big).
\end{align*}

\medskip

\begin{remark}{(observations on the assumptions)} If condition (H1) fails we expect the limit not to be translational invariant anymore and if a integral representation exists it is expected to be of the form
\begin{align*}
F(u,A) = \int_A f(x,u,\nabla u)\mathrm{d}x.
\end{align*}
However, integral-representation theorems for non-translation-invariant functionals in general require restrictive hypotheses that should be added to (H2)--(H5). 

If condition (H2) fails the $\Gamma$-limit may not be finite on $W^{1,p}(\Omega;\mathbb{R}^n)$. Condition (H3) allows to estimate nearest-neighbour interactions centered in $i$ in terms of $\phi_i^\varepsilon$. Note that this estimate may still be true even if there are no interactions of the type $|D^{e_n}_\varepsilon u|^p$ taken into account by $\phi_i^\varepsilon$. Indeed if $d=1$ we may take $c_2,c_3 >0$
\begin{align*}
\phi_i^\varepsilon(\{z_{j}\}_{j \in Z_\varepsilon(\Omega_i)}) = c_2 \left|\frac{z_3-z_1}{2\varepsilon}\right|^2 + c_3 \left|\frac{z_3-z_0}{3\varepsilon}\right|^2.
\end{align*}
If we assume a finite range $R$ of interactions and assume that the potential $\phi_i^\varepsilon$ is well behaved in some sense condition (H4) is always satisfied and in the definition of $R_i^\varepsilon$ the summation is only taken over $Q_R(i)$.
If condition (H4) fails the $\Gamma$-limit may be non-local. Indeed there are examples where functionals of the form
\begin{align*}
F(u) = \int_\Omega |\nabla u|^2 \mathrm{d}x + \int_{\Omega \times \Omega} k(x,y)|u(x)-u(y)|^2\mathrm{d}x
\end{align*}
can be obtained as the $\Gamma$-limit of energies of the form
\begin{align*}
F_\varepsilon(u) = \sum_{i \in Z_\varepsilon(\Omega)} \underset{i+\varepsilon\xi \in Z_\varepsilon(\Omega)}{\sum_{\xi \in \mathbb{Z}^N}} \varepsilon^N c^\varepsilon_{i,\xi} |D^\xi_\varepsilon u(i)|^2.
\end{align*}
Note that (H1) is still satisfied. Condition (H5) mimics the so-called \textit{fundamental estimate} in the continuum and ensures that the limit $F(u,\cdot)$ be subadditive as a set function. Note that this condition is satisfied for subadditive potentials with appropriate growth conditions. In particular, in Section \ref{ACt} we show how 
the hypotheses above can be deduced from those in \cite{AC} in the case of pair potentials.
\end{remark}
\subsection{Compactness and integral representation}
The goal of this section is to establish the proof of Theorem \ref{Compactness}.
\begin{theorem}{\em(\bf{Integral Representation})}\label{Compactness} Let $F_\varepsilon : L^p(\Omega;\mathbb{R}^n) \to [0,+\infty]$ be defined by {\rm(\ref{DefEnergy})}, where $\phi_i^\varepsilon : (\mathbb{R}^n)^{Z_\varepsilon(\Omega)} \to [0,+\infty)$ satisfy {\rm(H1)--(H5)}. Then for every sequence $(\varepsilon_j)$ of positive numbers converging to $0$, there exists a subsequence $\varepsilon_{j_k}$ and a Carath\'eodory function $f: \Omega \times \mathbb{R}^{n\times N} \to [0,+\infty)$, quasiconvex in the second variable satisfying
\begin{align*}
c(|\xi|^p-1) \leq f(x,\xi) \leq C(|\xi|^p+1)
\end{align*}
with $0<c < C$, such that $F_{\varepsilon_{j_k}}(\cdot)$ $\Gamma$-converges with respect to the $L^p(\Omega;\mathbb{R}^n)$-topology to the functional $F: L^p(\Omega;\mathbb{R}^n) \to [0,+\infty]$ defined by
\begin{align*}
F(u) = \begin{cases} \displaystyle\int_{\Omega} f(x,\nabla u)\mathrm{d}x &\text{if }u \in W^{1,p}(\Omega;\mathbb{R}^n)\\
+\infty &\text{otherwise.}
\end{cases}
\end{align*}
Moreover, for any $u \in W^{1,p}(\Omega;\mathbb{R}^n)$ and any $A \in \mathcal{A}(\Omega)$ we have
\begin{align*}
\Gamma\text{-}\lim_{k \to +\infty} F_{\varepsilon_{j_k}}(u,A) = \int_A f(x,\nabla u)\mathrm{d}x.
\end{align*}
\end{theorem}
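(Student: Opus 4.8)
\emph{Proof idea.} The plan is to follow the classical localization method of $\Gamma$-convergence (see \cite{GCB}, Chapter~12), adapted to the present discrete, non-local, multi-body setting, and then apply Theorem \ref{Theorem1}. We regard $F_\varepsilon$ as defined on pairs $(u,A)\in L^p(\Omega;\mathbb{R}^n)\times\mathcal{A}(\Omega)$ via (\ref{DefEnergy}). Fix a countable family $\mathcal{D}\subset\mathcal{A}^{reg}(\Omega)$ dense in $\mathcal{A}(\Omega)$ with respect to inclusion (e.g.\ finite unions of open cubes with rational vertices). By the compactness of $\Gamma$-convergence and a diagonal argument we extract a subsequence, not relabelled, such that $F_{\varepsilon_j}(\cdot,A)$ $\Gamma$-converges in $L^p$ for every $A\in\mathcal{D}$; call the limit $F(\cdot,A)$, and extend it to all of $\mathcal{A}(\Omega)$ by inner regularization, $F(u,A):=\sup\{F(u,A'):A'\in\mathcal{D},\ A'\Subset A\}$.

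Next I would pin down the domain and the growth of $F$. Using (H3) together with a standard discrete-to-continuum compactness argument on piecewise-affine interpolations, any $u_j\to u$ in $L^p$ with $\sup_j F_{\varepsilon_j}(u_j,A)<\infty$ has $u|_A\in W^{1,p}(A;\mathbb{R}^n)$ and satisfies a lower bound $F(u,A)\ge c(\int_A|\nabla u|^p\,\dx-|A|)$; hence $F(u,A)=+\infty$ if $u\notin W^{1,p}(A)$. For the upper bound one first builds recovery sequences for affine $u=Mx$ by restricting $Mx$ to the lattice and invoking (H2), getting $F(Mx,A)\le C(|M|^p+1)|A|$; a covering-and-relaxation argument — using (H4) to glue local constructions without generating spurious long-range energy — upgrades this to $F(u,A)\le C(\int_A|\nabla u|^p\,\dx+|A|)$ for all $u\in W^{1,p}$. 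At this stage (H1) gives translational invariance in the codomain and (H4) gives locality, i.e.\ $F(u,A)=F(v,A)$ whenever $u=v$ a.e.\ on $A$.

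The heart of the proof is that $F(u,\cdot)$ is the restriction to open sets of a Radon measure, which is exactly hypothesis i) of Theorem \ref{Theorem1}. Superadditivity on disjoint open sets is immediate from $\phi_i^\varepsilon\ge 0$ and additivity of the sum over $Z_\varepsilon(A)$. The delicate step is subadditivity, i.e.\ the \emph{fundamental estimate}: given $A'\Subset A\Subset\Omega$, $B\in\mathcal{A}(\Omega)$, and recovery sequences $u_j$ on $A$ and $v_j$ on $B$ for the same $u\in W^{1,p}$, one sets $w_j=\psi_j u_j+(1-\psi_j)v_j$ with $\psi$ a discrete cut-off between $A'$ and $A$, and applies (H5) to bound $F_{\varepsilon_j}(w_j,A'\cup B)$ by $C(F_{\varepsilon_j}(u_j,A)+F_{\varepsilon_j}(v_j,B))+\sum_i\varepsilon_j^N R^{\varepsilon_j}_i(u_j,v_j,\psi)$. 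The work is in killing the remainder: choose the cut-off over a thin annular layer of width $\sim 1/L$ and, by a De Giorgi-type averaging over $L$ disjoint such layers, select one on which $|D^{e_n}_\varepsilon\psi|^p|u_j-v_j|^p$ carries at most a fraction $C/L$ of the $L^p$-mass of $u_j-v_j$; the conditions (\ref{Assumptions on Cxi}) on $C^{j,\xi}_\varepsilon$ (finite total mass and vanishing tail) then allow passing to the limit in $\varepsilon_j$, after which one sends $L\to\infty$ using $u_j-v_j\to 0$ in $L^p$. Combined with the upper bound from the previous paragraph (serving as the dominating measure) and the inner regularity built into $F$, the De Giorgi--Letta criterion yields the measure property; one checks along the way that the inner-regularized $F$ still agrees with the $\Gamma$-limit on all of $\mathcal{A}(\Omega)$.

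Finally, weak-$W^{1,1}$ lower semicontinuity of $F(\cdot,A)$ (hypothesis ii)) follows from $L^p$-lower semicontinuity of $\Gamma$-limits together with the $p$-growth bounds, since bounded sequences in $W^{1,p}$ converging weakly in $W^{1,1}$ converge strongly in $L^p$ up to subsequences; hypothesis iii) is the two-sided bound already obtained, iv) is (H1), v) is (H4). Theorem \ref{Theorem1} then produces a Carath\'eodory $f:\Omega\times\mathbb{R}^{n\times N}\to[0,+\infty)$ with $F(u,A)=\int_A f(x,\nabla u)\,\dx$; quasiconvexity of $f(x,\cdot)$ is equivalent to the weak lower semicontinuity just established, and $c(|\xi|^p-1)\le f(x,\xi)\le C(|\xi|^p+1)$ is read off the bounds on $F$. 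Since for fixed $u\in W^{1,p}$ both $A\mapsto \Gamma\text{-}\lim_k F_{\varepsilon_{j_k}}(u,A)$ and $A\mapsto\int_A f(x,\nabla u)\,\dx$ are Radon measures agreeing on $\mathcal{D}$, they agree on every $A\in\mathcal{A}(\Omega)$, giving the localized statement, and $A=\Omega$ gives the global one. The main obstacle throughout is the fundamental-estimate step: the multi-body, long-range character of $\phi_i^\varepsilon$ forces the cut-off to couple points on both sides of the interface, and the resulting error can only be absorbed using precisely the averaged decay encoded in (H4) and (H5).
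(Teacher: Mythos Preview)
Your proposal is correct and mirrors the paper's approach: the paper proves the lower bound, upper bound, fundamental estimate, inner regularity, and locality as separate propositions (using (H3); (H2)+(H4)+(H5) on piecewise-affine functions plus density; (H5) with De Giorgi layer-averaging; and (H4), respectively), then applies the De Giorgi--Letta criterion and Theorem~\ref{Theorem1}. The one technical wrinkle you elide is that in the fundamental estimate the remainder $R^\varepsilon_i$ from (H5) involves discrete gradients of $u_j,v_j$ over all of $Z_\varepsilon(\Omega)$, while recovery sequences for $F''(u,A)$ and $F''(u,B)$ only carry gradient control on compact subsets of $A$ and $B$ via (H3); the paper handles this by first blending $u_j,v_j$ outside $A''',B'''$ with a globally $W^{1,p}$-controlled discretization of $u$ before running the cut-off construction.
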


We will derive the proof of Theorem \ref{Compactness} as a consequence of some propositions and lemmas, which are fundamental in order to show that our limit functionals satisfies all the assumption of Theorem \ref{Theorem1}. In the next two proposition we show with the use of (H1)--(H5) that assumptions (ii) and (iii) of Theorem \ref{Theorem1}  are satisfied. Note that property (\ref{lowerbound}) below allows to deduce weak lower-semicontinuity in $W^{1,p}$ even though we prove the $\Gamma$-convergence of the discrete energies with respect to the strong $L^p(\Omega;\mathbb{R}^n)$-topology, so that assumption (ii) is satisfied.
\medskip

Note that the proof of Proposition 3.3 is the same as the proof of Proposition 3.4 in \cite{AC}. We repeat it here only for completeness and the reader's convenience.

\begin{proposition}\label{CoercivityProp} Let $\phi^\varepsilon_i: (\mathbb{R}^n)^{Z_\varepsilon(\Omega_i)} \to [0,+\infty)$ satisfy {\rm(H3)}. If $u \in L^p(\Omega,\mathbb{R}^n)$ is such that $F'(u,A) < +\infty$, then $u \in W^{1,p}(A,\mathbb{R}^n)$ and
\begin{equation}\label{lowerbound}
F'(u,A) \geq c\left(||\nabla u||^p_{L^p(A;\mathbb{R}^{n\times N})}- |A|\right)
\end{equation}
for some positive constant $c$ independent on $u$ and $A$.
\end{proposition}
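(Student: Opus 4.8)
\emph{Plan.} Only hypothesis (H3) is needed; the argument is the classical discrete lower-semicontinuity estimate, which extracts a nearest-neighbour lower bound, uses it to control the $L^p$-norm of the gradient of the piecewise-affine interpolants, and concludes by weak lower semicontinuity in $W^{1,p}$. First I would fix $A'\subset\subset A''\subset\subset A$ with $|\partial A''|=0$ (finite unions of cubes, say), and, since $F'(u,A)<+\infty$, choose $u_\varepsilon\to u$ in $L^p(\Omega;\mathbb R^n)$ realising the $\Gamma$-$\liminf$; passing to a not-relabelled subsequence I may assume $F_\varepsilon(u_\varepsilon,A)\to F'(u,A)$ and $\sup_\varepsilon F_\varepsilon(u_\varepsilon,A)<+\infty$. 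For $\varepsilon$ small every $i\in Z_\varepsilon(A'')$ satisfies $i+\varepsilon e_n\in\Omega$ for all $n$, so applying (H3) with $z_j=(u_\varepsilon)_{j+i}$ (so that $D^{e_n}_\varepsilon z(0)=D^{e_n}_\varepsilon u_\varepsilon(i)$) and discarding the nonnegative terms over $Z_\varepsilon(A)\setminus Z_\varepsilon(A'')$ gives
\begin{equation*}
F_\varepsilon(u_\varepsilon,A)\ge c\sum_{i\in Z_\varepsilon(A'')}\varepsilon^N\Bigl(\sum_{n=1}^N|D^{e_n}_\varepsilon u_\varepsilon(i)|^p-1\Bigr)=c\,S_\varepsilon-c\,\varepsilon^N\#Z_\varepsilon(A''),
\end{equation*}
with $S_\varepsilon:=\sum_{i\in Z_\varepsilon(A'')}\varepsilon^N\sum_n|D^{e_n}_\varepsilon u_\varepsilon(i)|^p$; since $\varepsilon^N\#Z_\varepsilon(A'')\to|A''|$ and the energies are bounded, the $S_\varepsilon$ are equibounded.

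Next I would introduce the continuous piecewise-affine interpolant $\widehat u_\varepsilon$ of $u_\varepsilon$ on the standard simplicial decomposition of $\varepsilon\mathbb Z^N$. On each simplex the columns of $\nabla\widehat u_\varepsilon$ are nearest-neighbour difference quotients of $u_\varepsilon$ at the vertices of that simplex, so for $\varepsilon$ small enough that every simplex meeting $A'$ has all vertices in $A''$ there is a constant $c_2=c_2(N,p)\ge1$ with $\int_{A'}|\nabla\widehat u_\varepsilon|^p\,\dx\le c_2 S_\varepsilon$; hence $\widehat u_\varepsilon$ is bounded in $W^{1,p}(A';\mathbb R^n)$. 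Moreover $\widehat u_\varepsilon$ differs from the piecewise-constant interpolant of $u_\varepsilon$ by a function whose $L^p(A')$-norm is $\le C\varepsilon S_\varepsilon^{1/p}\to0$, so $\widehat u_\varepsilon\to u$ in $L^p(A';\mathbb R^n)$, whence $u\in W^{1,p}(A';\mathbb R^n)$ and $\widehat u_\varepsilon\rightharpoonup u$ weakly in $W^{1,p}(A';\mathbb R^n)$. By weak lower semicontinuity of $v\mapsto\int_{A'}|\nabla v|^p\,\dx$ and the estimate above,
\begin{equation*}
\int_{A'}|\nabla u|^p\,\dx\le\liminf_\varepsilon\int_{A'}|\nabla\widehat u_\varepsilon|^p\,\dx\le c_2\liminf_\varepsilon S_\varepsilon\le\frac{c_2}{c}F'(u,A)+c_2|A''|.
\end{equation*}

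Finally, rearranging this and letting $A',A''\uparrow A$ — the bound on $\|\nabla u\|_{L^p(A')}$ being uniform, so that the local weak gradients agree and glue to a single map in $L^p(A;\mathbb R^{n\times N})$, and $u\in L^p(\Omega;\mathbb R^n)$ — gives $u\in W^{1,p}(A;\mathbb R^n)$ together with $F'(u,A)\ge\frac{c}{c_2}\int_A|\nabla u|^p\,\dx-c\,|A|$, which is (\ref{lowerbound}) with a constant depending only on $N$, $p$ and the constant in (H3). I do not expect a substantial obstacle: the proof coincides with that of the corresponding statement in \cite{AC}, and the only delicate points are the bookkeeping of the boundary layers, handled by the exhaustion $A'\subset\subset A''\subset\subset A$ and the nonnegativity of the discarded terms, and the elementary norm comparison between the discrete difference quotients and $\nabla\widehat u_\varepsilon$; hypotheses (H1), (H2), (H4) and (H5) are not used.
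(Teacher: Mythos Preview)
Your argument is correct, but it is not the same as the paper's (nor, despite your claim, the one in \cite{AC}, which the paper reproduces verbatim). The paper proceeds \emph{direction by direction}: for each $k$ it builds an interpolant $v_n^k$ that is affine only in the $x_k$-variable and piecewise constant in the others, so that $\partial_{x_k} v_n^k$ equals the discrete difference quotient $D^{e_k}_{\varepsilon_n}u_n$ exactly; it then applies Fubini/Fatou and one-dimensional lower semicontinuity on slices, and invokes the characterisation of $W^{1,p}$ by slicing to conclude. Your route instead uses a \emph{global} simplicial piecewise-affine interpolant $\widehat u_\varepsilon$, controls $\|\nabla\widehat u_\varepsilon\|_{L^p}$ by the discrete energy up to a mesh constant $c_2$, and appeals to weak $W^{1,p}$-compactness and lower semicontinuity of the $p$-Dirichlet integral. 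Both are standard and both yield the statement; yours is arguably more direct and avoids the slicing machinery, at the price of the extra interpolation constant $c_2$ in the final inequality (harmless, since the proposition only asks for \emph{some} positive constant). The paper's slicing argument keeps the discrete and continuum directional derivatives identical, so no comparison constant enters in front of the gradient term.
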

\begin{proof}
Let $\varepsilon_n \to 0^+$ and let $u_n \to u $ in $L^p(\Omega;\mathbb{R}^n) $ be such that $\underset{n}{\liminf} F_{\varepsilon_n}(u_n,A) <+\infty$. By (H3) we get
\begin{align}\label{coercivitybound}
F_{\varepsilon_n}(u_n,A) \geq c \sum_{i \in Z_\varepsilon(A)}\sum_{k=1}^N \varepsilon^N |D^{e_k}_{\varepsilon_n}u_n(i)|^p - c N |A|.
\end{align}
For any $k \in \{1,\cdots,N\}$, consider the sequence of piecewise-affine functions $(v_n^k)$ defined as follows
\begin{align*}
v_n^k(x) =u_n(i) + D^{e_k}_{\varepsilon_n}u_n(i)(x_k-i_k) \quad x \in (i + [0,\varepsilon_n)^N) \cap \Omega,\  i \in Z_\varepsilon(A).
\end{align*}
Note that $v_n^k$ is a function of bounded variation and we will denote by $\frac{\partial v^k_n}{\partial x_k}$ the density of the absolutely continuos part of $D_{x_k}v^k_n$ with respect to the Lebesgue measure. Moreover, for $\mathcal{H}^{N-1}$-a.e.~$y \in (A)^{e_k}$ the slices $(v_n^k)_{e_k,y}$ belong to $W^{1,p}((A)^{e_k}_y;\mathbb{R}^n)$. 
Note that, for any fixed $\eta >0 $, $v_n^k \to u $ in $L^p(A_\eta;\mathbb{R}^n)$ for every $k \in \{1,\cdots,N\}$. Moreover, since $\frac{\partial v^k_n}{\partial x_k}(x) = D^{e_k}_{\varepsilon_n}u_n(i)$ for $x \in i +[0,\varepsilon_n)^N$, we get
\begin{align*}
F_{\varepsilon_n}(u_n,A) \geq c \sum_{k=1}^N \int_{A_\eta} \left|\frac{\partial v^k_n}{\partial x_k}(x)\right|^p \mathrm{d}x - c N|A|.
\end{align*}
We now apply a standard slicing argument. By Fubini's Theorem and Fatou's Lemma for any $k$ we get
\begin{align*}
\liminf_n \int_{A_\eta}\left|\frac{\partial v^k_n}{\partial x_k}(x)\right|^p \mathrm{d}x \geq \int_{(A_\eta)^{e_k}} \liminf_n \int_{(A)^{e_k}_y} |(v^k_n)'_{e_k,y}(t)|^p\mathrm{dt} \mathrm{d}\mathcal{H}^{N-1}(y).
\end{align*}
Since, up to passing to a subsequence, we may assume that, for $\mathcal{H}^{N-1}$-a.e.~$y \in (A_\eta)^{e_k}$ $(v^k_n)_{e_k,y} \to u_{e_k,y}$ in $L^p((A_\eta)^{e_k}_y;\mathbb{R}^n)$, we deduce that $u_{e_k,y} \in W^{1,p}((A_\eta)_y^{e_k};\mathbb{R}^n) $ for $\mathcal{H}^{N-1}$-a.e.~$y \in (A_\eta)^{e_k}$ and
\begin{align*}
\liminf_n \int_{A_\eta}\left|\frac{\partial v^k_n}{\partial x_k}(x)\right|^p \mathrm{d}x \geq \int_{(A_\eta)^{e_k}} \int_{(A)^{e_k}_y} |u'_{e_k,y}(t)|^p\mathrm{dt} \mathrm{d}\mathcal{H}^{N-1}(y).
\end{align*}
Then by (\ref{coercivitybound}), we have
\begin{align*}
\liminf_n F_{\varepsilon_n}(u_n,A) \geq  c  \sum_{k=1}^N\int_{(A_\eta)^{e_k}} \int_{(A)^{e_k}_y} |u'_{e_k,y}(t)|^p\mathrm{dt} \mathrm{d}\mathcal{H}^{N-1}(y) - c N |A|.
\end{align*}
Since, in particular, the previous inequality implies that
 \begin{align*}
 \sum_{k=1}^N\int_{(A_\eta)^{e_k}} \int_{(A)^{e_k}_y} |u'_{e_k,y}(t)|^p\mathrm{dt} \mathrm{d}\mathcal{H}^{N-1}(y) < +\infty,
 \end{align*}
 thanks to the characterization of $W^{1,p}$ by slicing we obtain that $u \in W^{1,p}(A_\eta,\mathbb{R}^n)$ and 
 \begin{align*}
 \liminf_n F_{\varepsilon_n}(u_n,A) &\geq c\sum_{k=1}^N\int_{A_\eta}  \left|\frac{\partial u}{\partial x_k}(x)\right|^p \mathrm{d}x - cN |A| \\&\geq c\left(||\nabla u||^p_{L^p(A_\eta;\mathbb{R}^{n\times N})}- |A|\right)
 \end{align*}
 Letting $\eta \to 0^+$, we get the conclusion.
\end{proof}

\begin{proposition} \label{CoercivityProp} Let $\phi^\varepsilon_i: (\mathbb{R}^n)^{Z_\varepsilon(\Omega_i)} \to [0,+\infty)$ satisfy {\rm (H2),(H4)} and {\rm(H5)}. We then have
\begin{equation}
F''(u,A) \leq C\left(||\nabla u||^p_{L^p(A;\mathbb{R}^{n\times N})}+ |A|\right)
\end{equation}
for some positive constant $C$ independent on $u$ and $A$.
\end{proposition}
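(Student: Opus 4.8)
The plan is to establish the bound in three stages: for affine functions, for continuous piecewise-affine functions, and then for arbitrary $u\in W^{1,p}(A;\RR^n)$ by density together with the $L^p(\Omega;\RR^n)$-lower semicontinuity of $F''(\cdot,A)$ (nothing is needed when $u|_A\notin W^{1,p}$, since then the right-hand side is $+\infty$). \emph{Affine case.} For $u(x)=Mx+b$ I take $u_\e$ to be the restriction of $u$ to $Z_\e(\Omega)$; at a site $i\in Z_\e(A)$ the argument of $\phi_i^\e$ is $j\mapsto M(i+j)+b=Mj+(Mi+b)$, so by (H1) followed by the Cauchy--Born bound (H2) one has $\phi_i^\e(\{u_{i+j}\}_j)\le C(|M|^p+1)$, and summing with the volume factor $\e^N$ (using $\e^N\#Z_\e(A)\to|A|$) yields $F''(Mx+b,A)\le C(|M|^p+1)|A|=C(\|\nabla u\|^p_{L^p(A)}+|A|)$.

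\emph{Piecewise-affine case.} Let $u$ be Lipschitz and equal to $v_k:=M_kx+b_k$ on the pieces $A_k$ of a finite polyhedral partition of a neighbourhood of $\overline\Omega$, and let $\Sigma$ denote the union of the interfaces. Fix a small $\rho>0$ and a Lipschitz partition of unity $\{\psi_k\}$ with $\psi_k\equiv1$ on the set of points of $A_k$ at distance $>2\rho$ from $\Sigma$, $\psi_k\equiv0$ outside the $2\rho$-neighbourhood of $A_k$, and $|\nabla\psi_k|\le C/\rho$; set $\tilde u_\rho:=\sum_k\psi_kv_k$. Since adjacent affine pieces agree on the hyperplane of their common face (continuity of $u$), in every transition layer $|\nabla\psi_k|\,|v_k-v_l|\le (C/\rho)\cdot C\rho$, so $\tilde u_\rho$ is Lipschitz with constant independent of $\rho$, while $\tilde u_\rho\to u$ in $L^p(\Omega;\RR^n)$ and $\|\nabla\tilde u_\rho\|_{L^p(A)}\to\|\nabla u\|_{L^p(A)}$ as $\rho\to0$. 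I use the restriction of $\tilde u_\rho$ to $Z_\e(\Omega)$ as a candidate recovery sequence for $\tilde u_\rho$, splitting $Z_\e(A)$ into \emph{good} sites $i$, for which a cube $Q_{c\rho}(i)$ lies inside a single $A_k$ on which $\tilde u_\rho$ coincides with the affine map $v_k$, and \emph{bad} sites, those within $O(\rho)$ of $\Sigma$; the bad sites carry total volume $\le C(u)\rho$ because $\Sigma$ is Lipschitz. On a good site, (H4) with $\delta$ a fixed multiple of $\rho$, together with (H1)--(H2) and the uniform Lipschitz bound on $\tilde u_\rho$, gives $\phi_i^\e(\{(\tilde u_\rho)_{i+j}\}_j)\le C(|M_k|^p+1)+C(u)\sum_{j,\xi}C^{j,\xi}_{\e,\delta}$, so, by the vanishing of $\sum_{j,\xi}C^{j,\xi}_{\e,\delta}$, the good sites contribute at most $C(\|\nabla u\|^p_{L^p(A)}+|A|)+o_\e(1)$.

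\emph{Bad sites.} Near a bad site $i$, $\tilde u_\rho$ is a convex combination of the finitely many affine maps $v_k$ whose pieces touch $\Sigma$ close to $i$; writing it as an iterated two-term cut-off combination (peeling off one affine piece at a time) and applying (H5) a bounded number of times, depending only on $u$, the quantity $\phi_i^\e(\{(\tilde u_\rho)_{i+j}\}_j)$ is controlled by $C(u)$ times a sum of terms $\phi_i^\e$ evaluated at the affine maps $v_k$ --- each $\le C(|M_k|^p+1)$ by (H2) --- plus remainders $R_i^\e$. In $R_i^\e$ the second sum is $\le C(u)\sum_{j,\xi}C^{j,\xi}_\e$ (from $|D^\xi_\e v_k|\le|M_k|$ and the uniform Lipschitz bound), hence bounded by (H5). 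For the first sum, which contains the large factor $\sup_k|D^{e_n}_\e\psi_k|^p\le (C/\rho)^p$, I split the $(j,\xi)$-sum according to whether $\max\{|j|,\e|\xi|\}$ exceeds $\rho$: on the far part the factor $\sum_{\max>\rho}C^{j,\xi}_\e=o_\e(1)$ absorbs the (fixed) remaining quantities, while on the near part the point $i+j+\e\xi$ stays within $O(\rho)$ of the vertex of $\Sigma$ nearest $i$, so that $|v_k-v_l|^p\le C(u)\rho^p$ there for all pieces $k,l$ relevant near $i$ (again by continuity of $u$), and the product $\sup_k|D^{e_n}_\e\psi_k|^p\,|v_k-v_l|^p$ is bounded. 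Altogether $\phi_i^\e(\{(\tilde u_\rho)_{i+j}\}_j)\le C(u)(1+\max_k|M_k|^p)+o_\e(1)$ on bad sites, whose contribution is therefore $\le C(u)\rho\,(1+\max_k|M_k|^p)+o_\e(1)$.

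\emph{Conclusion.} Letting $\e\to0$ yields $F''(\tilde u_\rho,A)\le C(\|\nabla u\|^p_{L^p(A)}+|A|)+C(u)\rho\,(1+\max_k|M_k|^p)$; letting then $\rho\to0$ and using the $L^p$-lower semicontinuity of $F''(\cdot,A)$ with $\tilde u_\rho\to u$ proves the estimate for piecewise-affine $u$, and a final density step (piecewise-affine $u_m\to u$ in $L^p(\Omega)$ with $\nabla u_m\to\nabla u$ in $L^p(A)$) gives the general case. The main obstacle is the estimate of the (H5) remainders on the bad layer: one must tune the smoothing width $\rho$ so that the unboundedly large discrete gradient of the cut-off, of order $\rho^{-p}$, is compensated by the smallness, of order $\rho^p$, of the differences $v_k-v_l$ of adjacent affine pieces near $\Sigma$ --- which relies essentially on the continuity of $u$ --- while simultaneously exploiting that the weights $C^{j,\xi}_\e$ concentrate near $j=0$ and small $\xi$; and the reduction to nested two-term cut-off combinations must be organized so that (H5) is applied only to pairs of affine maps that agree at a point close to the site under consideration.
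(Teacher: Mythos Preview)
Your proposal is correct and follows essentially the same route as the paper: the same three-stage reduction (affine, piecewise affine, density), the same smoothed interpolant with parameter $\rho$ (the paper's $\delta$), the same good/bad splitting of lattice sites with (H4)+(H2) on the good part and an iterated application of (H5) on the bad part, and the same near/far splitting of the remainder weights $C^{j,\xi}_\e$ to absorb the $\rho^{-p}$ coming from the cut-off gradients. One small terminological imprecision: in the iterated peeling, (H5) is not applied to two \emph{affine} maps but to an affine $v_k$ against the partially built Lipschitz combination $u^{k-1}_\rho$; the crucial fact (which you do identify) is that $v_k$ and this combination agree on $\partial\Omega_k$, so their difference is $O(\rho)$ within the $O(\rho)$-layer---this is exactly how the paper closes the estimate.
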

\begin{proof} We first show that the inequality holds for $u\in W^{1,p}(\Omega;\mathbb{R}^n)$ piecewise affine and then we recover the inequality for any $u \in W^{1,p}(\Omega;\mathbb{R}^n)$ by a density argument. Let $u\in W^{1,p}(\Omega;\mathbb{R}^n)$ be piecewise affine, that means 
\begin{align*}
u(x) = \sum_{k=1}^K \chi_{\Omega_k}(x) (M_k x + b_k)= \sum_{k=1}^K \chi_{\Omega_k}(x)u_k(x),
\end{align*} 
where $\Omega_k= U_k \cap \Omega$, with $U_k$ disjoint open simplices such that $|\Omega \setminus \bigcup_k \Omega_k|=0$, $M_k\in \mathbb{R}^{n\times N}, b_k \in \mathbb{R}^n, k=1,\ldots,K$. In the following, for such $u\in W^{1,p}(\Omega;\mathbb{R}^n)$  we construct $u_\delta \in W^{1,\infty}(\Omega;\mathbb{R}^n)$, which agrees with $u$ on $\Omega^\delta_k$ for all $k \in \{1,\ldots,K\}$, $u_\delta=u$ on $(\Omega_k)^\delta$ for all $k \in \{1,\ldots,K\}$ and close to $\partial \Omega_k$ we have that $u_\delta= \psi_k^\delta u_{j} + (1-\psi_k^\delta) u^\delta_{j-1}$ for some $j \in \{1,\ldots,K\}$ and $||\nabla u_j ||_\infty + ||\nabla u^\delta_{j-1} ||_\infty \leq C||\nabla u||_\infty$ independent on $\delta$. The way we construct $u^\delta_{j}$, it satisfies the same property close to the boundary so that (H5) or (H4) can be applied repeatedly. In $\bigcup_k \Omega_k^{2\delta}$ we estimate the interactions separately with (H4). 

 Let 
$$\mathrm{d}= \min_{\partial \Omega_k \cap \Omega_j = \emptyset}\mathrm{dist}_\infty(\partial \Omega_k, \partial \Omega_j)$$
and let $\delta < \frac{\mathrm{d}}{4}$. For $k=1,\ldots, K$ choose inductively $ \varphi_{j,\delta}^k \subset C^\infty(\Omega), \, j =1,\ldots, k$ such that
\begin{align*}
&0\leq \varphi^k_{j,\delta} \leq 1, \quad \sum_{j =1}^k \varphi^k_{j,\delta} =1, \quad \mathrm{supp}(\varphi^k_{j,\delta}) \subset (\Omega_{j})_\delta,\quad \varphi^k_{j,\delta} = (1-\varphi^k_{k,\delta}) \varphi^{k-1}_{j,\delta} \text{ for all } j < k \\& \varphi_{j,\delta}^k(x) =1 \text{ if } x \in (\Omega_{j})^\delta, \quad ||\nabla \varphi_{j,\delta}^k||_\infty \leq \frac{C}{\delta}
\end{align*}
and define
\begin{align*}
u^k_\delta(x)= \varphi^k_{k,\delta}(x)u_k(x) + (1-\varphi_{k,\delta}^k(x)) u^{k-1}_\delta(x), \quad u^1_\delta(x) = u_1(x).
\end{align*}
Set $u_\delta(x)=u^K_\delta(x)$. We then have $||\nabla u_\delta^k||_\infty \leq C||\nabla u||_\infty$ for all $k \in \{1,\ldots,K\}$ and $u_\delta \to u$ strongly in $W^{1,p}(\Omega;\mathbb{R}^n)$ and we claim that
\begin{align}\label{UpperBound}
\liminf_{\delta \to 0}F''(u_\delta,A) \leq C(|| \nabla u ||_{L^p(A;\mathbb{R}^{d \times N})}^p + |A|).
\end{align} 
To this end define
\begin{align*}
u_\delta^\varepsilon(i) = u_\delta(i), \quad i \in Z_\varepsilon(\Omega).
\end{align*}
We have that $u^\varepsilon_\delta \to u$ strongly in $L^p(\Omega;\mathbb{R}^n)$ and therefore
\begin{align}\label{lsc udelta}
F''(u_\delta,A) \leq \liminf_{\varepsilon \to 0} F_\varepsilon(u_\delta^\varepsilon,A).
\end{align}
We divide the energy into the energy of points which are far away from the boundary of all the $\Omega_k$ and to the points which are close to some of the boundaries of $\Omega_k$:
\begin{align*}
F_\varepsilon(u_\delta^\varepsilon,A) = \sum_{i \in Z_\varepsilon(A)} \varepsilon^N \phi_i^\varepsilon(\{(u^\varepsilon_\delta)_{j+i}\}_{j \in Z_\varepsilon(\Omega_i)}) &= \sum_{k=1}^K \sum_{i \in Z_\varepsilon(A \cap (\Omega_k)^{2\delta})} \varepsilon^N \phi_i^\varepsilon(\{(u^\varepsilon_\delta)_{j+i}\}_{j \in Z_\varepsilon(\Omega_i)}) \\&+ \sum_{i \in Z_\varepsilon\left(A \setminus \left(\bigcup_{k=1}^K (\Omega_k)^{2\delta}\right) \right)} \varepsilon^N \phi_i^\varepsilon(\{(u^\varepsilon_\delta)_{j+i}\}_{j \in Z_\varepsilon(\Omega_i)})\\&=I_{\varepsilon,\delta}^1 +I_{\varepsilon,\delta}^2.
\end{align*}
Now, note that for $M \in \mathbb{R}^{n\times N}$, $ z \in \mathbb{R}^n$ and $(Mx + z)(i)= Mi +z$  we have
\begin{align*}
|D^{e_n}_\varepsilon (Mx+z)(i)|= \left|\frac{M(i+\varepsilon e_n)+z- (Mi+z)}{\varepsilon}\right| \leq |M|, \quad \forall n \in \{1,\ldots,N\}.
\end{align*}
Moreover, note that for every $v \in W^{1,\infty}(\Omega;\mathbb{R}^n)$ we have that
$
|D^\xi_\varepsilon v|^p \leq ||\nabla v||_\infty^p.
$
Using (H4) and (H2), noting that $\nabla u(x) = \nabla u_k(x)= M_k$ for $x \in \Omega_k$ and using the fact that $u^\varepsilon_\delta(j)=u_k(j)$ for all $j \in Z_\varepsilon(Q_\delta(i)), i \in Z_\varepsilon((\Omega_k)^{2\delta})$  (with slight abuse of notation we write $u_k$ for the discrete function as well as for the function defined in the continuum)  we can estimate the first term by
\begin{align*}
I_{\varepsilon,\delta}^1&\leq \sum_{k=1}^K \sum_{i \in Z_\varepsilon(A \cap (\Omega_k)^{2\delta})} \varepsilon^N\phi_i^\varepsilon(\{(u_k)_{j+i}\}_{j \in Z_\varepsilon(\Omega_i)}) \\&+\sum_{k=1}^K \sum_{i \in Z_\varepsilon(A \cap (\Omega_k)^{2\delta})} \varepsilon^N\underset{j+\varepsilon\xi \in Z_\varepsilon(\Omega)}{\sum_{j \in Z_\varepsilon(\Omega),\xi \in \mathbb{Z}^N}}C^{j-i,\xi}_{\varepsilon,\delta} (|D^\xi_\varepsilon u^\varepsilon_\delta(j)|^p+1)  \\&\leq \sum_{k=1}^K \sum_{i \in Z_\varepsilon(A \cap \Omega_k)} \varepsilon^N C(|M_k|^p+1) + C(u)  \sum_{j \in Z_\varepsilon(\Omega),\xi \in \mathbb{Z}^N}C^{j-i,\xi}_{\varepsilon,\delta} \\& \leq C(|| \nabla u||_{L^p(A_\varepsilon;\mathbb{R}^{n\times N})} + |A_\varepsilon|) + C(u)  \sum_{j \in Z_\varepsilon(\Omega),\xi \in \mathbb{Z}^N}C^{j-i,\xi}_{\varepsilon,\delta}.
\end{align*}
Taking the $\limsup$ as $\varepsilon \to 0$ taking into account (\ref{Assumptions on Cj}) and using the dominated-convergence theorem we obtain
\begin{align}\label{Bulkbound}
\limsup_{\varepsilon \to 0}I_{\varepsilon,\delta}^1 \leq C(|| \nabla u||_{L^p(A;\mathbb{R}^{n\times N})} + |A|).
\end{align}
Now let $i \in Z_\varepsilon(A \setminus (\bigcup_{k=1}^K(\Omega_k)^{2\delta}))$, that means $\mathrm{dist}_\infty(i,\partial \Omega_k)\leq 2\delta$ for some $k \in \{1,\ldots,K\}$. We prove
\begin{align}\label{BoundPhi}
\phi_i^\varepsilon(\{(u^\varepsilon_\delta)_{j+i}\}_{j \in Z_\varepsilon(\Omega_i)}) \leq C(u) 
\end{align}
for some constant depending on $u$. Recall that $u_\delta = u^K_\delta$, take $k \in \{1,\ldots,K\}$ and assume that we have proved already that
\begin{align*}
\phi_i^\varepsilon(\{(u^\varepsilon_\delta)_{j+i}\}_{j \in Z_\varepsilon(\Omega_i)}) \leq C(u)\left(\phi_i^\varepsilon(\{((u^k_\delta)^\varepsilon)_{j+i}\}_{j \in Z_\varepsilon(\Omega_i)})+1\right);
\end{align*}
we then prove that
\begin{align} \label{inductionbound}
\phi_i^\varepsilon(\{(u^\varepsilon_\delta)_{j+i}\}_{j \in Z_\varepsilon(\Omega_i)}) \leq C(u)\left(\phi_i^\varepsilon(\{((u^{k-1}_\delta)^\varepsilon)_{j+i}\}_{j \in Z_\varepsilon(\Omega_i)})+1\right).
\end{align}
We either have $\varphi^k_{k,\delta}=0$ in $Q_\delta(i)$. Then by using $||\nabla u^{k-1}_\delta ||_\infty \leq C||\nabla u||_\infty$, $|D^\xi_\varepsilon u| \leq C ||\nabla u||_\infty $ and (\ref{Assumptions on Cj}) we obtain
\begin{align*}
\phi_i^\varepsilon(\{((u^k_\delta)^\varepsilon)_{j+i}\}_{j \in Z_\varepsilon(\Omega_i)}) &\leq\phi_i^\varepsilon(\{((u^{k-1}_\delta)^\varepsilon)_{j+i}\}_{j \in Z_\varepsilon(\Omega_i)})+  \underset{j+\varepsilon\xi \in Z_\varepsilon(\Omega)}{\sum_{j \in Z_\varepsilon(\Omega),\xi \in \mathbb{Z}^N}} C^{j-i,\xi}_{\varepsilon,\delta}\Big(|D^\xi_\varepsilon(u^k_\delta)^\varepsilon(j)|^p +1\Big) \\&\leq C(u)(\phi_i^\varepsilon(\{((u^{k-1}_\delta)^\varepsilon)_{j+i}\}_{j \in Z_\varepsilon(\Omega_i)}) + 1)
\end{align*}
and we obtain (\ref{inductionbound}). Now in the case that $\varphi^k_{k,\delta}(x) >0$ for some $x \in Q_\delta(j)$ we use (H5) with $\varphi^k_{k,\delta}$ as a cutoff function, $u_k,u^{k-1}_\delta $ as $z,w$  and the assumptions on $\varphi_{k,\delta}^k$ we obtain
\begin{align*}
\phi_i^\varepsilon(\{((u^{k}_\delta)^\varepsilon)_{j+i}\}_{j \in Z_\varepsilon(\Omega_i)})\leq &C\left(\phi_i^\varepsilon(\{(u_k^\varepsilon)_{j+i}\}_{j \in Z_\varepsilon(\Omega_i)})+\phi_i^\varepsilon(\{((u^{k-1}_\delta)^\varepsilon)_{j+i}\}_{j \in Z_\varepsilon(\Omega_i)})+1\right)\\& +R^\varepsilon_i(u_k,u^{k-1}_\delta,\varphi^k_{k,\delta}) 
\end{align*}
with
\begin{align*}
R^\varepsilon_i(u_k,u^{k-1}_\delta,\varphi^k_{k,\delta}) =\left(\frac{1}{\delta^p}+1\right) &\underset{j+\varepsilon\xi \in Z_\varepsilon(\Omega)}{\sum_{j \in Z_\varepsilon(\Omega),\xi \in \mathbb{Z}^N}}C^{j-i,\xi}_{\varepsilon} |(u_k)(j+\varepsilon\xi)- (u^{k-1}_\delta)(j+\varepsilon\xi)|^p \\+ &\underset{j+\varepsilon\xi \in Z_\varepsilon(\Omega)}{\sum_{j \in Z_\varepsilon(\Omega),\xi \in \mathbb{Z}^N}} C_{\varepsilon}^{j-i,\xi} \left(|D^\xi_\varepsilon(u_\delta^{k-1})^\varepsilon(j)|^p + |D^\xi_\varepsilon u_{k}^\varepsilon(j)|^p  \right).
\end{align*}
First, note that by (H2) and by $ ||\nabla u_k||_\infty \leq C ||\nabla u||_\infty $ we have
\begin{align*}
\phi_i^\varepsilon(\{(u_k^\varepsilon)_{j+i}\}_{j \in Z_\varepsilon(\Omega_i)}) \leq C(|M_k|^p+1) \leq C(u)
\end{align*}
and 
\begin{align}\label{Gradientbound}
\underset{j+\varepsilon\xi \in Z_\varepsilon(\Omega)}{\sum_{j \in Z_\varepsilon(\Omega),\xi \in \mathbb{Z}^N}} C_{\varepsilon}^{j-i,\xi} \left(|D^\xi_\varepsilon(u_\delta^{k-1})^\varepsilon(j)|^p + |D^\xi_\varepsilon u_{k}^\varepsilon(j)|^p  \right)\leq C(u)
\end{align}
since  $||\nabla u_k ||_\infty, ||\nabla u^{k-1}_\delta||_\infty \leq C||\nabla u||_\infty$.
Now since $\varphi^k_{k,\delta}(x) >0$ for some $x \in Q_\delta(i)$  we have that $\mathrm{dist}_\infty(i,\partial \Omega_k) < 2\delta$, $u_k(x) = u^{k-1}_\delta(x) $ on $\partial \Omega_k$ and $||\nabla u_k ||_\infty, ||\nabla u^{k-1}_\delta||_\infty \leq C||\nabla u||_\infty$. We therefore have
\begin{align*}
|(u_k)(j) - (u^{k-1}_\delta)(j)| &\leq |(u_k)(j) - (u_k)(x)| + |(u_k)(x) - (u^{k-1}_\delta)(x)|  + | (u^{k-1}_\delta)(x) - (u^{k-1}_\delta)(j)|\\& \leq C(u)\delta
\end{align*}
for all $j \in Z_\varepsilon(Q_{2\delta}(i))$ and hence we have, splitting the sum into the summation over $j,\xi$ such that $\max\{\varepsilon|\xi|,|j-i|\} > \delta\} $ and the complement and using (\ref{Gradientbound}) we obtain
\begin{align*}
 R^\varepsilon_i(u^{k-1}_\delta,u_k,\varphi^k_{k,\delta})&\leq C(u)\Big((1+\delta^p) \underset{\max\{\varepsilon|\xi|,|j-i|\}\leq \delta}{\underset{j+\varepsilon\xi \in Z_\varepsilon(\Omega)}{\sum_{j \in Z_\varepsilon(\Omega),\xi \in \mathbb{Z}^N}}} C^{j-i,\xi}_{\varepsilon}+ \Bigl(\frac{1}{\delta^p}+1\Bigr)\underset{\max\{\varepsilon|\xi|,|j-i|\}>\delta}{\underset{j+\varepsilon\xi \in Z_\varepsilon(\Omega)}{\sum_{j \in Z_\varepsilon(\Omega),\xi \in \mathbb{Z}^N}}}C^{j-i,\xi}_\varepsilon +1\Big)  \\&\leq C(u).
\end{align*}
for $\varepsilon$ small enough, using (\ref{Assumptions on Cj}). By summing over $j$ and, taking the maximum over $j \in Z_\varepsilon(\Omega) $ in the inner sum and using (\ref{Assumptions on Cj}) we obtain (\ref{inductionbound}).
If $k=1$ by (H2) and the definition of $u^1_\delta$ we have that
\begin{align*}
\phi_i^\varepsilon(\{((u^{1}_\delta)^\varepsilon)_{j+i}\}_{j \in Z_\varepsilon(\Omega_i)}) = \phi_i^\varepsilon(\{(u_1)_{j+i}\}_{j \in Z_\varepsilon(\Omega_i)}) \leq C(u)
\end{align*}
and (\ref{BoundPhi}) follows. Now for $A \in \mathcal{A}(\Omega)$ we have that
\begin{align*}
I_{\varepsilon,\delta}^2 \leq C(u) \varepsilon^N\#  Z_\varepsilon\left(A\setminus(\bigcup_{k=1}^K(\Omega_k)^{2\delta})\right) &\leq C(u) \varepsilon^N\#  Z_\varepsilon\left(\Omega\setminus(\bigcup_{k=1}^K(\Omega_k)^{2\delta})\right) \\&\leq C(u) \left| \Omega\setminus(\bigcup_{k=1}^K(\Omega_k)^{2\delta})\right|.
\end{align*}
Therefore, using that $|\Omega \setminus \bigcup_{k=1}^K \Omega_k| = 0$ and the dominated-convergence theorem, we have that
\begin{align} \label{surfacezero}
\limsup_{\delta \to 0}\limsup_{\varepsilon \to 0} I^2_{\varepsilon,\delta} = 0
\end{align}
By (\ref{lsc udelta}), (\ref{Bulkbound}) and (\ref{surfacezero}) we obtain (\ref{UpperBound}) and the claim follows.
Now by the lower semicontinuity of $F''(\cdot,A)$ we have
\begin{align}\label{Fdoubleprimeu}
F''(u,A) \leq \liminf_{\delta \to 0} F''(u_\delta,A) &\leq C(|| \nabla u ||_{L^p(A;\mathbb{R}^{n\times N})}^p + |A|).
\end{align}
Now for general $u \in W^{1,p}(\Omega;\mathbb{R}^n)$ we take $\{u_n\} \subset W^{1,p}(\Omega;\mathbb{R}^n)$ piecewise affine such that $u_n \to u$ strongly in $W^{1,p}(\Omega;\mathbb{R}^n)$ and again by the lower semicontinuity of $F''(\cdot,A)$ we have
\begin{align*}
F''(u,A) \leq \liminf_{n \to \infty} F''(u_n,A) \leq \lim_{n \to \infty} C(|| \nabla u_n ||_{L^p(A;\mathbb{R}^{d \times N})}^p + |A|) = C(|| \nabla u ||_{L^p(A;\mathbb{R}^{n\times N})}^p + |A|)
\end{align*}
and the statement is proven.
\end{proof}

\begin{proposition}\label{Subad} Let $\phi_i^\varepsilon : (\mathbb{R}^n)^{Z_\varepsilon(\Omega)} \to [0,+\infty)$ satisfy {\rm (H2)--(H5)}. Let $A,B \in \mathcal{A}(\Omega)$ and let $A',B' \in \mathcal{A}(\Omega)$ be such that $A' \subset\subset A$ and $B' \subset\subset B$. Then for any $u \in W^{1,p}(\Omega;\mathbb{R}^n)$ we have
\begin{equation*}
F''(u, A' \cup B' ) \leq F''(u,A) + F''(u,B)
\end{equation*}
\end{proposition}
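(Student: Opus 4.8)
\emph{Plan.} This is the discrete \emph{fundamental estimate}: the idea is to glue a near-optimal sequence for $A$ to one for $B$ by means of a cut-off function, estimating the ``interface'' contribution by (H5) and the bulk by (H4), and then to average over many nested cut-offs so that the interface contribution becomes negligible. By the upper bound just proved we may assume $F''(u,A),F''(u,B)<+\infty$. Pick $v_\varepsilon\to u$ and $w_\varepsilon\to u$ in $L^p(\Omega;\mathbb R^n)$ with $\limsup_{\varepsilon}F_\varepsilon(v_\varepsilon,A)=F''(u,A)$ and $\limsup_{\varepsilon}F_\varepsilon(w_\varepsilon,B)=F''(u,B)$; by (H3) the quantities $\sum_{i\in Z_\varepsilon(A)}\varepsilon^N\sum_{n}|D^{e_n}_\varepsilon v_\varepsilon(i)|^p$ and its analogue for $w_\varepsilon$ on $B$ are then bounded uniformly in $\varepsilon$. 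Fix $\nu\in\mathbb N$, insert a buffer $A'\subset\subset A_0\subset\subset A$, and choose nested open sets $A_0=\tilde A_0\subset\subset\tilde A_1\subset\subset\cdots\subset\subset\tilde A_\nu\subset\subset A$ (e.g.\ with $\mathrm{dist}_\infty(\tilde A_{j-1},\Omega\setminus\tilde A_j)$ comparable to $1/\nu$) together with $\varphi_k\in C_c^\infty(\tilde A_k)$, $0\le\varphi_k\le1$, $\varphi_k\equiv1$ on $\tilde A_{k-1}$, $\|\nabla\varphi_k\|_\infty\le C\nu$, where $C$ depends only on $A_0,A$. Put $z_k^\varepsilon:=w_\varepsilon+\varphi_k(v_\varepsilon-w_\varepsilon)$ on $Z_\varepsilon(\Omega)$; since $v_\varepsilon-w_\varepsilon\to0$ in $L^p$, $z_k^\varepsilon\to u$ in $L^p(\Omega;\mathbb R^n)$ uniformly in $k$. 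We abbreviate $\phi^\varepsilon_i(z):=\phi^\varepsilon_i(\{z_{j+i}\}_{j\in Z_\varepsilon(\Omega_i)})$.

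Fix $\delta>0$ smaller than a fixed fraction of the gaps between consecutive $\tilde A_j$, of $\mathrm{dist}_\infty(A',\partial A_0)$ and of $\mathrm{dist}_\infty(\tilde A_\nu,\partial A)$. For $i\in Z_\varepsilon(A'\cup B')$ split according to $\varphi_k$ on $Z_\varepsilon(Q_\delta(i))$: if $\varphi_k\equiv1$ there, then $z_k^\varepsilon=v_\varepsilon$ on $Z_\varepsilon(Q_\delta(i))$ and $i\in\mathrm{supp}\,\varphi_k\subset A$, so (H4) gives $\phi^\varepsilon_i(z_k^\varepsilon)\le\phi^\varepsilon_i(v_\varepsilon)+(\mathrm{rest})$; if $\varphi_k\equiv0$ there, then $z_k^\varepsilon=w_\varepsilon$ on $Z_\varepsilon(Q_\delta(i))$ and, since $A'\subset\tilde A_{k-1}$, $i\in B'\subset B$, so (H4) gives $\phi^\varepsilon_i(z_k^\varepsilon)\le\phi^\varepsilon_i(w_\varepsilon)+(\mathrm{rest})$; otherwise $i$ lies in the collar $T_k:=Z_\varepsilon\big((A'\cup B')\cap(S_k)_\delta\big)$ of the annulus $S_k:=\tilde A_k\setminus\overline{\tilde A_{k-1}}$, and the buffer $A_0$ forces $T_k\subset Z_\varepsilon(A\cap B)$ while the spacing of the $\tilde A_j$ forces $T_k\cap T_{k'}=\emptyset$ whenever $|k-k'|\ge2$ — here we invoke (H5) with cut-off $\varphi_k$ and $z=v_\varepsilon$, $w=w_\varepsilon$. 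The three index sets being disjoint and $\phi^\varepsilon_i\ge0$, summation yields
\begin{align*}
F_\varepsilon(z_k^\varepsilon,A'\cup B')\le F_\varepsilon(v_\varepsilon,A)+F_\varepsilon(w_\varepsilon,B)+\mathcal E_{k,\delta}^\varepsilon+\sum_{i\in T_k}\varepsilon^N\Big(C\big(\phi^\varepsilon_i(v_\varepsilon)+\phi^\varepsilon_i(w_\varepsilon)\big)+R^\varepsilon_i(v_\varepsilon,w_\varepsilon,\varphi_k)\Big),
\end{align*}
where $\mathcal E_{k,\delta}^\varepsilon$ collects the (H4) remainders.

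It remains to see that the last two terms are negligible after averaging in $k$. For fixed $\nu,\delta$ one has $\mathcal E_{k,\delta}^\varepsilon\to0$ as $\varepsilon\to0$: exchanging sums, it is at most $\big(\sum_{j,\xi}C_{\varepsilon,\delta}^{j,\xi}\big)$ times a quantity bounded uniformly in $\varepsilon$, obtained from $|D^\xi_\varepsilon z_k^\varepsilon|^p\le C(|D^\xi_\varepsilon v_\varepsilon|^p+|D^\xi_\varepsilon w_\varepsilon|^p+\nu^p|v_\varepsilon-w_\varepsilon|^p)$, by bounding long-range differences ($\varepsilon|\xi|>\delta$) crudely via $|D^\xi_\varepsilon v_\varepsilon(j)|^p\le\delta^{-p}(|v_\varepsilon(j+\varepsilon\xi)|^p+|v_\varepsilon(j)|^p)$ and short-range ones ($\varepsilon|\xi|\le\delta$) by writing $D^\xi_\varepsilon v_\varepsilon$ as an average of shifted nearest-neighbour difference quotients, whose $\varepsilon^N$-sums over a set compactly contained in $A$ (resp.\ $B$) are controlled by (H3) and the energy bound. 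The same estimates — now using the two properties of $C^{j,\xi}_\varepsilon$ in (H5) (boundedness of the full sum and $\sum_{\max\{\varepsilon|\xi|,|j|\}>\delta}C^{j,\xi}_\varepsilon\to0$) — bound $\sum_{i\in T_k}\varepsilon^N R^\varepsilon_i(v_\varepsilon,w_\varepsilon,\varphi_k)$ uniformly in $\varepsilon$ and $k$, the summand carrying $(\sup|D\varphi_k|^p+1)|v_\varepsilon-w_\varepsilon|^p$ in fact vanishing (it is $\le\nu^p\big(\sum_{j,\xi}C^{j,\xi}_\varepsilon\big)\|v_\varepsilon-w_\varepsilon\|_{L^p}^p$, $\nu$ fixed). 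Since every $i$ lies in at most two collars $T_k$ and $T_k\subset Z_\varepsilon(A\cap B)$, summing the collar term over $k=1,\dots,\nu$ gives at most $2C\big(F_\varepsilon(v_\varepsilon,A)+F_\varepsilon(w_\varepsilon,B)\big)+o_\varepsilon(1)$, hence for some $k(\varepsilon)$ it is $\le\frac{2C}{\nu}\big(F_\varepsilon(v_\varepsilon,A)+F_\varepsilon(w_\varepsilon,B)\big)+o_\varepsilon(1)$. As $z_{k(\varepsilon)}^\varepsilon\to u$ in $L^p$, the definition of $F''$ gives $F''(u,A'\cup B')\le\limsup_\varepsilon F_\varepsilon(z_{k(\varepsilon)}^\varepsilon,A'\cup B')\le(1+\tfrac{2C}{\nu})\big(F''(u,A)+F''(u,B)\big)$, and letting $\nu\to\infty$ yields the claim.

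\emph{Main obstacle.} The delicate point is the uniform-in-$\varepsilon$ control of the discrete-gradient remainders produced by (H4) and (H5): a near-optimal sequence carries an a-priori bound on its nearest-neighbour differences only on the set where it is near-optimal, whereas those remainders involve long-range difference quotients $D^\xi_\varepsilon v_\varepsilon$ at lattice points that may escape that set. The resolution is to separate the scales according to whether $\varepsilon|\xi|>\delta$ or $\varepsilon|\xi|\le\delta$ — the far part being absorbed by the small total mass of the coefficients, the near part being reduced to nearest-neighbour differences along a path of length $\lesssim\delta$ that stays inside a set compactly contained in $A$ or $B$ — which is precisely what forces the careful choice of the buffer $A_0$ and of the auxiliary sets $\tilde A_j$, and dictates the order of limits ($\varepsilon\to0$ first, $\delta$ fixed small, $\nu\to\infty$ last).
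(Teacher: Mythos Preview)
Your overall scheme---nested cut-offs between $A'$ and $A$, (H4) for the bulk, (H5) for the collar, then averaging in $k$---is exactly the paper's. The gap is in your control of the remainder terms.

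Both the (H4) remainder and the gradient part of $R^\varepsilon_i(v_\varepsilon,w_\varepsilon,\varphi_k)$ in (H5) involve difference quotients $|D^\xi_\varepsilon v_\varepsilon(j)|^p$, $|D^\xi_\varepsilon w_\varepsilon(j)|^p$ (or $|D^\xi_\varepsilon z^k_\varepsilon(j)|^p$) with $j$ ranging over \emph{all} of $Z_\varepsilon(\Omega)$, not just over points near $i$. Your scale-splitting handles the case $\varepsilon|\xi|>\delta$ (via the $L^p$ bound on the values) and the case where both $\varepsilon|\xi|\le\delta$ \emph{and the offset $j$ is small}, since then the nearest-neighbour path stays inside a set compactly contained in $A$ or $B$. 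It does \emph{not} cover the case $\varepsilon|\xi|\le\delta$ with $|j|$ large: then the evaluation point $i+j$ may lie in $\Omega\setminus(A\cup B)$, where neither recovery sequence carries any a-priori bound on its nearest-neighbour differences (the energy bound gives control only on $A$, resp.\ $B$). The smallness of $\sum_{j,\xi}C^{j,\xi}_{\varepsilon,\delta}$ (or of $\sum_{\max\{\varepsilon|\xi|,|j|\}>\delta}C^{j,\xi}_\varepsilon$) is of no help here: after exchanging sums you need the inner quantity $\sum_{i}\varepsilon^N|D^\xi_\varepsilon v_\varepsilon(i+j)|^p$ bounded \emph{uniformly in $j,\xi,\varepsilon$}, and without global control on $v_\varepsilon$ this fails.

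The paper closes this gap by a preliminary modification that you have skipped. Using $u\in W^{1,p}(\Omega;\mathbb R^n)$, one takes auxiliary sequences $\tilde u_\varepsilon,\tilde v_\varepsilon\to u$ in $L^p$ with $\sup_\varepsilon\sum_{j\in Z_\varepsilon(\Omega)}\varepsilon^N|D^{e_n}_\varepsilon\tilde u_\varepsilon(j)|^p<\infty$ on \emph{all} of $\Omega$ (e.g.\ discretizations of a smooth approximant of $u$), and glues each recovery sequence to the corresponding auxiliary one outside a compact subset of $A$ (resp.\ $B$) by a fixed cut-off. By (H4) this changes $F_\varepsilon(\cdot,A)$ and $F_\varepsilon(\cdot,B)$ only by $o(1)$, while the resulting sequences $u'_\varepsilon,v'_\varepsilon$ now satisfy the global bound $\sup_\varepsilon\sum_{j\in Z_\varepsilon(\Omega)}\varepsilon^N|D^{e_n}_\varepsilon u'_\varepsilon(j)|^p<\infty$. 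With this in hand every $|D^\xi_\varepsilon|^p$ can be reduced to nearest-neighbour differences along a path and summed over all of $\Omega$, so the inner sums in the remainders are uniformly bounded and the small coefficients kill them. Inserting this modification step repairs your argument; the rest of your proof then goes through essentially as written.
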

\begin{proof}
Without loss of generality, we may suppose $F''(u,A)$ and $F''(u,B)$ finite. Let $(u_\varepsilon)_\varepsilon$ and $(v_\varepsilon)_\varepsilon$ converge to $u $ in $L^p(\Omega;\mathbb{R}^n)$ and be such that
\begin{equation*}
\limsup_{\varepsilon \to 0^+} F_\varepsilon(u_\varepsilon,A) = F''(u,A), \quad \limsup_{\varepsilon \to 0^+} F_\varepsilon(v_\varepsilon,B) = F''(u,B),
\end{equation*}
and therefore
\begin{align}\label{Energybound1}
&\sup_{\varepsilon >0} \sum_{i \in Z_\varepsilon(A)} \varepsilon^N \phi_i^\varepsilon(\{(u_\varepsilon)_{j+i}\}_{j \in Z_\varepsilon(\Omega_i)}) < \infty, \\&\label{Energybound2}\sup_{\varepsilon >0} \sum_{i \in Z_\varepsilon(B)} \varepsilon^N \phi_i^\varepsilon(\{(v_\varepsilon)_{j+i}\}_{j \in Z_\varepsilon(\Omega_i)}) < \infty.
\end{align}
By (H3) we have that
\begin{align} \label{Coercivitybounds1}
&\sup_{n\in \{1,\ldots,N\}} \sup_{\varepsilon >0} \sum_{i \in Z_\varepsilon(A'')} \varepsilon^N |D^{e_n}_\varepsilon u_\varepsilon(i)|^p < +\infty \\&\label{Coercivitybounds2} \sup_{n\in \{1,\ldots,N\}} \sup_{\varepsilon >0} \sum_{i \in Z_\varepsilon(B'')} \varepsilon^N |D^{e_n}_\varepsilon v_\varepsilon(i)|^p < +\infty
\end{align}
for all $A'' \subset\subset A, B'' \subset\subset B$.
Since $u_\varepsilon$ and $v_\varepsilon$ converge to $u$ in $L^p(\Omega;\mathbb{R}^n)$, we have that
\begin{align} \label{convergencebounds1}
\sum_{i \in Z_\varepsilon(\Omega)}\varepsilon^N \left(|u_\varepsilon(i)|^p + |v_\varepsilon(i)|^p \right) & \leq ||u_\varepsilon||^p_{L^p(\Omega;\mathbb{R}^n)} +||v_\varepsilon||^p_{L^p(\Omega;\mathbb{R}^n)} \leq C <\infty \\ \label{convergencebounds2} \sum_{i \in Z_\varepsilon(\Omega)}\varepsilon^N \left(|u_\varepsilon(i)-v_\varepsilon(i)|^p \right) &\leq ||u_\varepsilon-v_\varepsilon||_{L^p(\Omega;\mathbb{R}^n)} \to 0.
\end{align}
Since $u \in W^{1,p}(\Omega;\mathbb{R}^n)$ there exists $\tilde{u}_\varepsilon, \tilde{v}_\varepsilon$ such that $\tilde{u}_\varepsilon$ and $\tilde{v}_\varepsilon$ converge to $u$ in $L^p(\Omega;\mathbb{R}^n)$ and 
\begin{align}\label{smoothapproxbound}
\sup_{n \in \{1,\ldots,N\}} \sup_{\varepsilon >0} \sum_{i \in Z_\varepsilon(\Omega)} \varepsilon^N\left( |D_\varepsilon^{e_n} \tilde{u}_\varepsilon(i)|^p + |D_\varepsilon^{e_n} \tilde{v}_\varepsilon(i)|^p\right) < \infty.
\end{align}
Take $A'',A''',B'',B''' \in \mathcal{A}(\Omega)$,$\varphi_A , \varphi_B \in C^\infty(\Omega)$  such that $A' \subset\subset A'' \subset \subset A''' \subset\subset A$, $B' \subset\subset B'' \subset \subset B''' \subset\subset B$, $0\leq \varphi_A,\varphi_B\leq 1$, $A'''\subset\{\varphi_A=0\} $, $B'''\subset\{\varphi_B=0\} $, $A''\subset\{\varphi_A=1\}$, $B'' \subset\{\varphi_B=1\}  $ and $||\nabla \varphi_A||_\infty, ||\nabla \varphi_B||_\infty \leq C$, and define $u'_\varepsilon = \varphi_A u_\varepsilon + (1-\varphi_A) \tilde{u}_\varepsilon$,$v'_\varepsilon = \varphi_B v_\varepsilon + (1-\varphi_B) \tilde{v}_\varepsilon$.
Now for $j \in Z_\varepsilon(\Omega)$, $\psi $ cut-off function $z,w \in \mathcal{A}_\varepsilon(\Omega;\mathbb{R}^n)$ $v = \psi z + (1-\psi)w$ we have
\begin{align}\label{convexcomb}
D^{e_n}_\varepsilon v(j) = \psi(j)D^{e_n}_\varepsilon z(j) + (1-\psi(j))D^{e_n}_\varepsilon w(j) + D^{e_n}_\varepsilon \psi(j)(z(j)-w(j))
\end{align}
Since $\{\varphi_A >0\} \subset\subset A$, by (\ref{Coercivitybounds1}), (\ref{smoothapproxbound}) and (\ref{convexcomb})  we have that
\begin{align} \label{smoothapproxbound2}
\sup_{n\in \{1,\ldots,N\}}\sup_{\varepsilon >0}\sum_{j \in Z_\varepsilon(\Omega)} \varepsilon^N |D^{e_n}_\varepsilon u'_\varepsilon(j)|^p <\infty.
\end{align}
We can perform a similar construction for $v'_\varepsilon$ and therefore assume that an analogous bound to (\ref{smoothapproxbound2}) holds also for $v'_\varepsilon$. Moreover, since $u'_\varepsilon$ and $v'_\varepsilon $ converge to $u$ in $L^p(\Omega;\mathbb{R}^n)$ we have that (\ref{convergencebounds1}) and (\ref{convergencebounds2}) hold with $u'_\varepsilon$ and $v'_\varepsilon$.
 Now for $\delta >0 $, by (H4), it holds
\begin{align} \label{Fepsprime}
\phi_i^\varepsilon(\{(u_\varepsilon')_{j+i}\}_{j \in Z_\varepsilon(\Omega_i)}) \leq \phi_i^\varepsilon(\{(u_\varepsilon)_{j+i}\}_{j \in Z_\varepsilon(\Omega_i)}) +\underset{j+\varepsilon\xi \in Z_\varepsilon(\Omega)}{\sum_{j \in Z_\varepsilon(\Omega),\xi \in \mathbb{Z}^N}} C^{j-i,\xi}_{\varepsilon,\delta} (|D^\xi_\varepsilon u'_\varepsilon(j)|^p+1)
\end{align}
as well as a similar estimate for $v'_\varepsilon$ in $B'$.
Set 
\begin{align*}
\mathrm{d}:= \mathrm{dist}_\infty(A',A^c)\qquad\hbox{and}\qquad A_k := (A')_{ \frac{k}{3K}\mathrm{d}}
\end{align*}
for any $k \in \{K, \ldots,2K\}$. Let $\varphi_k$ be a cut-off function between $A_k$ and $A_{k+1}$, with $||\nabla \varphi_k||_\infty \leq CK$ . Then for any $k \in \{K,\ldots, 2K \}$ consider the family of functions $w_\varepsilon^k\in \mathcal{A}_\varepsilon(\Omega;\mathbb{R}^n)$ converging to $u$ in $L^p(\Omega;\mathbb{R}^n) $, defined as
\begin{align*}
w_\varepsilon^k(i) = \varphi_k(i) u'_\varepsilon(i) + (1-\varphi_k(i)) v'_\varepsilon(i).
\end{align*}
Given $i \in Z_\varepsilon(A' \cup B')$, then either
$\mathrm{dist}_\infty(i, A_{k+1} \setminus \overline{A}_k) \geq \frac{\mathrm{d}}{3K}$, in which case either $w^k_\varepsilon(j) = u'_\varepsilon(j)$ for $j \in Z_\varepsilon(Q_{\frac{\mathrm{d}}{2K}}(i))$ and $i \in Z_\varepsilon(A_k)$ or $w^k_\varepsilon(j) = v'_\varepsilon(j)$ $j \in Z_\varepsilon(Q_{\frac{\mathrm{d}}{2K}}(i))$ and $i \in Z_\varepsilon((A' \cup B') \setminus A_{k+1}) \subset Z_\varepsilon(B')$, or $\mathrm{dist}_\infty(i, A_{k+1} \setminus \overline{A}_k) < \frac{\mathrm{d}}{6K}$ . In the first case, using (H4), we estimate
\begin{align} \label{Awayfrom1}
\phi_i^\varepsilon(\{(w^k_\varepsilon)_{j+i}\}_{j \in Z_\varepsilon(\Omega)}) 
\leq \phi_i^\varepsilon(\{(u'_\varepsilon)_{j+i}\}_{j \in Z_\varepsilon(\Omega)}) +\underset{j+\varepsilon\xi \in Z_\varepsilon(\Omega)}{\sum_{j \in Z_\varepsilon(\Omega),\xi \in \mathbb{Z}^N}} C^{j-i,\xi}_{\varepsilon,\frac{\mathrm{d}}{2K}} (|D^\xi_\varepsilon w^k_\varepsilon(j)|^p+1).
\end{align}
In the second case, using (H4), we estimate
\begin{align}\label{Awayfrom2}
\phi_i^\varepsilon(\{(w^k_\varepsilon)_{j+i}\}_{j \in Z_\varepsilon(\Omega)}) 
\leq \phi_i^\varepsilon(\{(v'_\varepsilon)_{j+i}\}_{j \in Z_\varepsilon(\Omega)}) +\underset{j+\varepsilon\xi \in Z_\varepsilon(\Omega)}{\sum_{j \in Z_\varepsilon(\Omega),\xi \in \mathbb{Z}^N}} C^{j-i,\xi}_{\varepsilon,\frac{\mathrm{d}}{2K}} (|D^\xi_\varepsilon w^k_\varepsilon(j)|^p+1).
\end{align}
Using (\ref{convexcomb}) and the convexity of $|\cdot|^p$ we have for $j \in Z_\varepsilon(\Omega)$ and $\xi \in \mathbb{Z}^N$
\begin{align} \label{Boundawayfrom}
 |D^{\xi}_\varepsilon w^k_\varepsilon(j)|^p\leq &   |D^{\xi}_\varepsilon u'_\varepsilon(j)|^p + |D^{\xi}_\varepsilon v'_\varepsilon(j)|^p+CK^p |u'_\varepsilon(j+\varepsilon\xi)-v'_\varepsilon(j+\varepsilon\xi)|^p.
\end{align}
Now if $\mathrm{dist}_\infty(i, A_{k+1} \setminus \overline{A}_k) < \frac{\mathrm{d}}{3K}$ we have that $i \in Z_\varepsilon(A_{k+2}\setminus \overline{A}_{k-1}) =: Z_\varepsilon(S_k)$ where $S_k \subset\subset A \cap B$. By (H5) we have that for such an $i $ it holds
\begin{align} \label{Convexityclose}
\phi_i^\varepsilon(\{(w^k_\varepsilon)_{j+i}\}_{j \in Z_\varepsilon(\Omega)}) \leq &C(\phi_i^\varepsilon(\{(v'_\varepsilon)_{j+i}\}_{j \in Z_\varepsilon(\Omega)}) +\phi_i^\varepsilon(\{(u'_\varepsilon)_{j+i}\}_{j \in Z_\varepsilon(\Omega)}))+ R^\varepsilon_i(u'_\varepsilon,v'_\varepsilon,\varphi_k)
\end{align}
where
\begin{align}\label{Rieps}
R_i^\varepsilon(u'_\varepsilon,v'_\varepsilon,\varphi_k) =(CK^p+1)&\underset{j+\varepsilon\xi \in Z_\varepsilon(\Omega)}{\sum_{j \in Z_\varepsilon(\Omega),\xi \in \mathbb{Z}^N}} C^{j-i,\xi}_\varepsilon |u_\varepsilon(j+\varepsilon\xi)-v_\varepsilon(j+\varepsilon\xi)|^p \\+\nonumber  &\underset{j+\varepsilon\xi \in Z_\varepsilon(\Omega)}{\sum_{j \in Z_\varepsilon(\Omega),\xi \in \mathbb{Z}^N}} C^{j-i,\xi}_\varepsilon \left(|D^\xi_\varepsilon u'_\varepsilon(j)|^p+|D^\xi_\varepsilon v'_\varepsilon(j)|^p +1\right).
\end{align} 
Summing over $i \in Z_\varepsilon(A'\cup B')$ and splitting into the two cases as described above, using (\ref{Awayfrom1})--(\ref{Rieps}), we have
\begin{align*}
F_\varepsilon(w^k_\varepsilon,A'\cup B') &\leq  \underset{\mathrm{dist}_\infty(i, A_{k+1} \setminus \overline{A}_k) \geq \frac{\mathrm{d}}{3K}}{\sum_{i \in Z_\varepsilon(A' \cup B')}} \varepsilon^N \phi_i^\varepsilon(\{(w^k_\varepsilon)_{j+i}\}_{j \in Z_\varepsilon(\Omega_i)})+\sum_{i \in Z_\varepsilon(S_k)} \varepsilon^N \phi_i^\varepsilon(\{(w^k_\varepsilon)_{j+i}\}_{j \in Z_\varepsilon(\Omega_i)})  \\ \leq &F_\varepsilon(u_\varepsilon,A) + F_\varepsilon(v_\varepsilon,B) \\&+CK^p\sum_{i \in Z_\varepsilon(S_k)}\varepsilon^N\underset{j+\varepsilon\xi \in Z_\varepsilon(\Omega)}{\sum_{j \in Z_\varepsilon(\Omega),\xi \in \mathbb{Z}^N}} C^{j-i,\xi}_\varepsilon |u'_\varepsilon(j+\varepsilon\xi)-v'_\varepsilon(j+\varepsilon\xi)|^p  \\&+  CK^p\sum_{i \in Z_\varepsilon(A'\cup B')}\varepsilon^N \underset{j+\varepsilon\xi \in Z_\varepsilon(\Omega)}{\sum_{j \in Z_\varepsilon(\Omega),\xi \in \mathbb{Z}^N}}  C^{j-i,\xi}_{\varepsilon,\frac{\mathrm{d}}{2K}} |u'_\varepsilon(j+\varepsilon\xi)-v'_\varepsilon(j+\varepsilon\xi)|^p\\&+\sum_{i \in Z_\varepsilon(A'\cup B')}\varepsilon^N \underset{j+\varepsilon\xi \in Z_\varepsilon(\Omega)}{\sum_{j \in Z_\varepsilon(\Omega),\xi \in \mathbb{Z}^N}}C^{j-i,\xi}_{\varepsilon,\frac{\mathrm{d}}{2K}}\left(|D^\xi_\varepsilon u'_\varepsilon(j)|^p+ |D^\xi_\varepsilon v'(j)|^p+1\right)\\&+ \sum_{i \in Z_\varepsilon(S_k)}   \varepsilon^N  \underset{j+\varepsilon\xi \in Z_\varepsilon(\Omega)}{\sum_{j \in Z_\varepsilon(\Omega),\xi \in \mathbb{Z}^N}}C^{j-i,\xi}_\varepsilon\left(|D^\xi_\varepsilon u'_\varepsilon(j)|^p+ |D^\xi_\varepsilon v'(j)|^p+1\right)\\& + C\sum_{i \in Z_\varepsilon(S_k)} \varepsilon^N \left(\phi_i^\varepsilon(\{(v'_\varepsilon)_{j+i}\}_{j \in Z_\varepsilon(\Omega_i)}) +\phi_i^\varepsilon(\{(u'_\varepsilon)_{j+i}\}_{j \in Z_\varepsilon(\Omega_i)}) \right).
\end{align*}
Note that $\# \{j \neq k : S_k \cap S_j \neq \emptyset\} \leq 5$. Therefore summing over $k \in \{K,\ldots,2K-1\}$, averaging and taking into account (\ref{Energybound1})--(\ref{convergencebounds1}), (\ref{smoothapproxbound2}) and  Lemma 3.6 in \cite{AC}, we get
\begin{align} \label{Averagebound}
\frac{1}{K} \sum_{k=K}^{2K-1}F_\varepsilon(w^k_\varepsilon,A'\cup B') \leq F_\varepsilon(u_\varepsilon,A) +F_\varepsilon(v_\varepsilon,B) + \frac{C}{K} + (K^p +1) O(\varepsilon).
\end{align}
For any $\varepsilon>0$ there exists $k(\varepsilon) \in \{K,\ldots,2K-1\}$ such that
\begin{align}\label{Goodkeps}
F_\varepsilon(w^{k(\varepsilon)}_\varepsilon,A' \cup B') \leq \frac{1}{K} \sum_{k=K}^{2K-1}F_\varepsilon(w^k_\varepsilon,A'\cup B').
\end{align}
Then, since $w_\varepsilon^{k(\varepsilon)}$ still converges to $u$ in $L^p(\Omega;\mathbb{R}^n)$, by (\ref{Averagebound}) and (\ref{Goodkeps}), letting
$\varepsilon \to 0$ we get
\begin{align*}
F''(u,A' \cup B') \leq F''(u,A) + F(u,B) + \frac{C}{K}.
\end{align*}
Letting $K \to \infty$ we obtain the claim.
\end{proof}
\begin{proposition}\label{InnerregProp} Let $\phi_i^\varepsilon : (\mathbb{R}^n)^{Z_\varepsilon(\Omega)} \to [0,+\infty)$ satisfy {\rm (H2)--(H5)}. Then for any $u\in W^{1,p}(\Omega;\mathbb{R}^n)$ and any $A \in \mathcal{A}(\Omega)$ we have
\begin{align*}
\sup_{A' \subset\subset A} F''(u,A') = F''(u,A).
\end{align*}
\end{proposition}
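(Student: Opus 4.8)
The inequality $\sup_{A'\subset\subset A}F''(u,A')\le F''(u,A)$ is immediate and needs no hypotheses: since every $\phi_i^\varepsilon\ge0$, each set function $A\mapsto F_\varepsilon(u_\varepsilon,A)$ is non-decreasing, hence so is $A\mapsto F''(u,A)$. So the whole content is the reverse inequality $F''(u,A)\le L$, where $L:=\sup_{A'\subset\subset A}F''(u,A')$. I would first record that $F''(u,A)$ is finite — this is exactly the content of the upper bound $F''(u,\cdot)\le C(\|\nabla u\|_{L^p(\cdot;\mathbb R^{n\times N})}^p+|\cdot|)$ of Proposition \ref{CoercivityProp} — so that it suffices to prove $F''(u,A)\le L+\sigma$ for every $\sigma>0$.

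The mechanism is the classical passage from the fundamental estimate to inner regularity in the localization method. Fix $\sigma>0$. For $\rho>0$ set $A^\rho=\{x\in A:\mathrm{dist}_\infty(x,A^c)>\rho\}$; then $\overline{A^\rho}$ is a compact subset of $A$ (erosion separates from $A^c$, hence from $\partial\Omega$ as well), so $A^\rho\subset\subset A$, and $A^\rho\uparrow A$ as $\rho\downarrow0$. Consequently $|A\setminus\overline{A^{3\rho}}|\to0$ and, by absolute continuity of the Lebesgue integral, $\int_{A\setminus\overline{A^{3\rho}}}|\nabla u|^p\,\mathrm{d}x\to0$. I would then write $A$ as the union of a compactly contained core and a thin collar: with $A'':=A^{2\rho}$, $A''':=A^\rho$ and $B':=A\setminus\overline{A^{3\rho}}$ one checks $A''\cup B'=A$ and $A''\subset\subset A'''\subset\subset A$. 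Choosing $B\in\mathcal{A}(\Omega)$ to be a small neighbourhood of $B'$ inside $\Omega$ with $B'\subset\subset B$ and $C\big(\int_B|\nabla u|^p\,\mathrm{d}x+|B|\big)<\sigma$ (possible once $\rho$ is small), I would apply the subadditivity of Proposition \ref{Subad} to the decomposition $A=A''\cup B'$ with $A''\subset\subset A'''$ and $B'\subset\subset B$, obtaining
$$
F''(u,A)=F''(u,A''\cup B')\le F''(u,A''')+F''(u,B)\le L+C\Big(\int_B|\nabla u|^p\,\mathrm{d}x+|B|\Big)<L+\sigma,
$$
where $F''(u,A''')\le L$ because $A'''\subset\subset A$, and $F''(u,B)$ is controlled by the upper bound of Proposition \ref{CoercivityProp}. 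Letting $\sigma\to0$ gives $F''(u,A)\le L$, and together with the trivial inequality this proves the proposition.

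The step I expect to require the most care is the construction of the collar $B$: since $B'$ generically touches $\partial A$ — and, when $A$ reaches $\partial\Omega$, touches $\partial\Omega$ as well — the relation $B'\subset\subset B$ must be read relative to $\Omega$ and the enlargement carried out inside $\Omega$ (always possible since $\Omega$ is open); moreover $B$ has to be taken with measure tending to $0$ as $\rho\to0$, which is transparent when $\partial A$ is Lebesgue-negligible and in general can be arranged by first establishing the statement on a rich family such as $\mathcal{A}^{reg}(\Omega)$, which is all the subsequent De Giorgi–Letta measure-property argument needs. Apart from this bookkeeping near the boundary, everything — the identity $A''\cup B'=A$, the applicability of Propositions \ref{CoercivityProp} and \ref{Subad} to these sets, and the limit in $\sigma$ — is routine.
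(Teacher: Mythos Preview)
Your approach is the standard one and matches the paper's in spirit: split $A$ into a compactly contained core and a thin collar, apply the fundamental estimate (Proposition~\ref{Subad}) together with the upper bound of Proposition~\ref{CoercivityProp}. The decomposition $A=A^{2\rho}\cup(A\setminus\overline{A^{3\rho}})$ with $A^{2\rho}\subset\subset A^\rho\subset\subset A$ is correct, and for $A\subset\subset\Omega$ your argument goes through.

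The gap is exactly the one you flag in your last paragraph but do not actually resolve: when $A$ is not compactly contained in $\Omega$ (in particular $A=\Omega$, which must be covered since the statement is for all $A\in\mathcal A(\Omega)$), the collar $B'=A\setminus\overline{A^{3\rho}}$ has closure meeting $\partial\Omega$, so there is \emph{no} $B\in\mathcal A(\Omega)$ with $B'\subset\subset B$. Your proposed fixes do not work: reading $\subset\subset$ ``relative to $\Omega$'' is not what Proposition~\ref{Subad} assumes (its proof builds cut-offs supported in intermediate sets and uses the coercivity bounds on sets compactly contained in $A$ and $B$), and restricting to $\mathcal A^{reg}(\Omega)$ does not help since that class contains $\Omega$ itself. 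The paper supplies the missing idea: it extends the functional to a bounded smooth open set $\tilde\Omega\supset\supset\Omega$, defining $\tilde\phi_i^\varepsilon$ to coincide with $\phi_i^\varepsilon$ (acting on the restriction to $\Omega$) for $i\in Z_\varepsilon(\Omega)$ and to be a bare nearest-neighbour term $c\sum_n|D^{e_n}_\varepsilon z(i)|^p$ for $i\in Z_\varepsilon(\tilde\Omega\setminus\Omega)$. Then $\tilde F_\varepsilon(\tilde u,A)=F_\varepsilon(u,A)$ for every $A\in\mathcal A(\Omega)$ and every extension $\tilde u$ of $u$, the $\tilde\phi_i^\varepsilon$ still satisfy (H2)--(H5), and now every $A\in\mathcal A(\Omega)$ sits compactly inside $\tilde\Omega$, so the collar can be enlarged outward into $\tilde\Omega$ and Proposition~\ref{Subad} applies to $\tilde F_\varepsilon$. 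This extension step is what your argument is missing.
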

\begin{proof}
Since $F''(u,\cdot)$ is an increasing set function, it suffices to prove
\begin{align*}
\sup_{A' \subset\subset A} F''(u,A') \geq F''(u,A).
\end{align*}
In order to prove this, we define an extension of the functional $F_\varepsilon$ to a functional $\tilde{F}_\varepsilon$ defined on a bounded, smooth, open set $\tilde{\Omega} \supset \supset \Omega$ such that
\begin{align*}
\tilde{F}_\varepsilon(\tilde{u},A) = F_\varepsilon(u,A)
\end{align*}
for all $A \in \mathcal{A}(\Omega)$ and all $\tilde{u} \in \mathcal{A}_\varepsilon(\tilde{\Omega};\mathbb{R}^n)$ such that $\tilde{u}=u$ in $Z_\varepsilon(\Omega)$ and therefore 
\begin{align} \label{FFtilde}
F''(u,A) = \tilde{F}''(\tilde{u},A) 
\end{align}
for all $A \in \mathcal{A}(\Omega)$, $u \in W^{1,p}(\Omega;\mathbb{R}^n)$ and $\tilde{u} \in W^{1,p}(\Omega;\mathbb{R}^n)$ such that $\tilde{u}=u $ a.e. in $\Omega$. To this end we define $F_\varepsilon : \mathcal{A}_\varepsilon(\tilde{\Omega}) \times \mathcal{A}(\tilde{\Omega}) \to [0,+\infty)$ by
\begin{align*}
\tilde{F}_\varepsilon(u,A) = \sum_{i \in Z_\varepsilon(A)} \varepsilon^N \tilde{\phi}_i^\varepsilon(\{u_{j+i}\}_{j \in Z_\varepsilon(\tilde{\Omega}_i)})
\end{align*}
where $\tilde{\phi}_i^\varepsilon : (\mathbb{R}^n)^{Z_\varepsilon(\tilde{\Omega})} \to [0,+\infty)$ is defined by
\begin{align*}
\tilde{\phi}_i^\varepsilon(\{z_{j+i}\}_{j \in Z_\varepsilon(\Omega_i)}):= \begin{cases}\phi_i^\varepsilon(\{(z\lfloor_\Omega)_{j+i}\})_{j \in Z_\varepsilon(\Omega)} &i \in Z_\varepsilon(\Omega)\\ c\sum_{n=1}^N |D^{e_n}_\varepsilon z(i)|^p &i \in \tilde{\Omega} \setminus \Omega
\end{cases}
\end{align*}
with $c >0$ as in (\ref{coercivitybound}).
Note that $\tilde{\phi}_i^\varepsilon$ satisfies (H2)--(H5). Let $u \in W^{1,p}(\Omega;\mathbb{R}^n)$, extended  to $\tilde{u} \in  W^{1,p}(\tilde{\Omega};\mathbb{R}^n)$. Let $ A \in \mathcal{A}(\Omega)$; for $\delta > 0$ find $A^\delta,A_\delta,B_\delta$ such that $A^\delta \supset\supset A \supset\supset A_\delta \supset \supset A'_\delta \supset \supset B^\delta \supset \supset B_\delta$ and
\begin{align*}
|A^\delta \setminus B_\delta| + || \nabla u||_{L^P(A^\delta \setminus B_\delta;\mathbb{R}^{n\times N})} \leq \delta.
\end{align*}
Applying Proposition \ref{Subad} with $ U = A^\delta \setminus \overline{B}_\delta$, $ V = A_\delta $, $U'=A \setminus \overline{B}^\delta$ and $ V'= A'_\delta $ we have $U' \cup V' =A$ and therefore
\begin{align*}
\tilde{F}''(\tilde{u},A)&\leq \tilde{F}''(\tilde{u},U)=\tilde{F}''(\tilde{u},U' \cup V')\leq  \tilde{F}''(u,U) + \tilde{F}''(u,V) \leq \tilde{F}''(\tilde{u},A_\delta) + \tilde{F}''(\tilde{u}, A^\delta \setminus \overline{B}_\delta) \\&\leq \tilde{F}''(\tilde{u},A_\delta) + C\left(|A^\delta \setminus B_\delta| + || \nabla u||_{L^P(A^\delta \setminus B_\delta;\mathbb{R}^{d \times N})}^p\right) \\& \leq \tilde{F}''(u,A_\delta) + C\delta \leq \sup_{A' \subset\subset A} \tilde{F}''(\tilde{u},A') + C\delta
\end{align*}
Applying (\ref{FFtilde}) to $u, \tilde{u}$ and $A, A' $ we obtain 
\begin{align*}
F''(u,A) \leq \sup_{A' \subset\subset A} F''(u,A') + C\delta.
\end{align*}
The claim follows as $\delta \to 0^+$.
\end{proof}

\begin{proposition}\label{LocalityProp} Let $\phi_i^\varepsilon :(\mathbb{R}^n)^{Z_\varepsilon(\Omega)} \to [0,+\infty)$ satisfy {\rm (H2)--(H5)}. Then for any $A \in \mathcal{A}(\Omega)$ and for any $u,v \in W^{1,p}(\Omega;\mathbb{R}^n)$, such that $u= v$ a.e.~in $A$ we have
\begin{align*}
F''(u,A)=F''(v,A)
\end{align*}
\end{proposition}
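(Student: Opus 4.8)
The plan is to run the standard $\Gamma$-convergence scheme: once subadditivity and inner regularity are available (Propositions \ref{Subad} and \ref{InnerregProp}), locality will follow from a gluing construction whose only nontrivial ingredient is the fast decay encoded in (H4). By the symmetry of the statement it is enough to prove $F''(u,A)\le F''(v,A)$, and since $F''(u,\cdot)$ is inner regular by Proposition \ref{InnerregProp}, it suffices to establish
\[
F''(u,A')\le F''(v,A)\qquad\text{for every }A'\subset\subset A ,
\]
where we may assume $F''(v,A)<+\infty$.

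First I would fix a chain $A'\subset\subset A''\subset\subset A'''\subset\subset A$ and a cut-off function $\varphi\in C^\infty(\Omega)$ with $0\le\varphi\le1$, $\varphi\equiv1$ on $A''$, $\mathrm{supp}\,\varphi\subset A'''$ and $||\nabla\varphi||_\infty\le C$. Let $v_\varepsilon\to v$ in $L^p(\Omega;\mathbb{R}^n)$ realise $F''(v,A)$, so that $F_\varepsilon(v_\varepsilon,A)$ is bounded for $\varepsilon$ small and, by (H3) and (\ref{coercivitybound}), $\sum_{i\in Z_\varepsilon(A)}\varepsilon^N|D^{e_n}_\varepsilon v_\varepsilon(i)|^p$ is bounded for every $n$; let moreover $\tilde u_\varepsilon\to u$ in $L^p(\Omega;\mathbb{R}^n)$ satisfy (\ref{smoothapproxbound}), which is possible since $u\in W^{1,p}(\Omega;\mathbb{R}^n)$. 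I would then set $w_\varepsilon(i)=\varphi(i)v_\varepsilon(i)+(1-\varphi(i))\tilde u_\varepsilon(i)$ for $i\in Z_\varepsilon(\Omega)$. Since $v_\varepsilon\to v$, $\tilde u_\varepsilon\to u$ in $L^p$ and $u=v$ a.e.\ on $\mathrm{supp}\,\varphi\subset A$, the sequence $w_\varepsilon$ converges to $u$ in $L^p(\Omega;\mathbb{R}^n)$. Using the decomposition (\ref{convexcomb}), the gradient bounds on $v_\varepsilon$ on $A''\supset\mathrm{supp}\,\varphi$ and on $\tilde u_\varepsilon$, the bound $||D^{e_n}_\varepsilon\varphi||_\infty\le C$, and the fact that $v_\varepsilon-\tilde u_\varepsilon\to0$ in $L^p$ on a neighbourhood of $\mathrm{supp}\,\varphi$ contained in $A$, I would obtain $\sup_n\sup_\varepsilon\sum_{j\in Z_\varepsilon(\Omega)}\varepsilon^N|D^{e_n}_\varepsilon w_\varepsilon(j)|^p<+\infty$; writing a generic $\xi$-difference quotient as a discrete average of nearest-neighbour ones along a lattice path from $j$ to $j+\varepsilon\xi$ and applying Jensen's inequality, this upgrades to
\[
\sup_{\xi\in\mathbb{Z}^N}\ \sup_\varepsilon\ \sum_{j\in Z_\varepsilon(\Omega)}\varepsilon^N\big(|D^\xi_\varepsilon w_\varepsilon(j)|^p+1\big)\le C .
\]

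Next I would estimate $F_\varepsilon(w_\varepsilon,A')$. Since $A'\subset\subset A''$, there is $\delta>0$ with $Q_\delta(i)\subset A''$ for all $i\in Z_\varepsilon(A')$ and $\varepsilon$ small, so that $w_\varepsilon(j)=v_\varepsilon(j)$ for every $j\in Z_\varepsilon(Q_\delta(i))$. Applying (H4) with $z=w_\varepsilon$, $w=v_\varepsilon$ and summing over $i\in Z_\varepsilon(A')$ gives
\[
F_\varepsilon(w_\varepsilon,A')\le F_\varepsilon(v_\varepsilon,A')+\varepsilon^N\!\!\sum_{i\in Z_\varepsilon(A')}\ \underset{j+\varepsilon\xi\in Z_\varepsilon(\Omega)}{\sum_{j\in Z_\varepsilon(\Omega),\xi\in\mathbb{Z}^N}}C^{j-i,\xi}_{\varepsilon,\delta}\big(|D^\xi_\varepsilon w_\varepsilon(j)|^p+1\big).
\]
Exchanging the order of summation exactly as in the proof of Proposition \ref{Subad} (via Lemma 3.6 of \cite{AC}) and using the uniform bound above together with (\ref{Assumptions on Cj}), the error term is controlled by $C\sum_{\eta,\xi}C^{\eta,\xi}_{\varepsilon,\delta}$, which vanishes as $\varepsilon\to0$. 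Hence
\[
F''(u,A')\le\limsup_{\varepsilon\to0}F_\varepsilon(w_\varepsilon,A')\le\limsup_{\varepsilon\to0}F_\varepsilon(v_\varepsilon,A')\le\limsup_{\varepsilon\to0}F_\varepsilon(v_\varepsilon,A)=F''(v,A).
\]
Taking the supremum over $A'\subset\subset A$ and invoking Proposition \ref{InnerregProp} yields $F''(u,A)\le F''(v,A)$, and exchanging the roles of $u$ and $v$ concludes the argument.

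The step I expect to be the main obstacle is the uniform-in-$\xi$ control of the remainder produced by (H4): the discrete gradients $|D^\xi_\varepsilon w_\varepsilon|^p$ may a priori grow with $\xi$, so one has to make sure that the decay of $\sum_{j,\xi}C^{j,\xi}_{\varepsilon,\delta}$ in (\ref{Assumptions on Cj}) really absorbs them. This is precisely what the telescoping-plus-Jensen estimate (equivalently, Lemma 3.6 in \cite{AC}) achieves; the remaining ingredients — the gluing construction, the $L^p$-convergence of $w_\varepsilon$, and the reduction via symmetry and inner regularity — are routine.
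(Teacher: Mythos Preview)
Your argument is correct, and it is genuinely different from --- and more economical than --- the paper's proof. The paper establishes locality by invoking the full cut-off-and-averaging construction of Proposition~\ref{Subad}: it takes a recovery sequence $u_\varepsilon$ for $F''(u,A)$, a recovery sequence $v_\varepsilon$ for $F''(v,A\setminus\overline{A_\delta})$ on a thin annulus of measure~$\le\delta$, and glues them via (H5) with $K$ competing cut-offs, finally sending $K\to\infty$ and $\delta\to0$. You bypass all of this by observing that since the energy is only being evaluated on $A'\subset\subset\{\varphi=1\}$, the transition region sits \emph{outside} $A'$ and never needs to be estimated; a single cut-off suffices, and the comparison on $A'$ is effected directly by (H4) alone, without (H5) or any averaging. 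What your route buys is a cleaner separation of hypotheses: locality is seen to follow from the decaying non-locality (H4) together with (H3) for the gradient bounds (with (H2) and (H5) entering only indirectly through inner regularity). The paper's route is the standard black-box application of the fundamental estimate, which is slightly wasteful here but has the virtue of recycling Proposition~\ref{Subad} verbatim. One cosmetic slip: where you write ``the gradient bounds on $v_\varepsilon$ on $A''\supset\mathrm{supp}\,\varphi$'' you mean a set compactly containing $\mathrm{supp}\,\varphi$, e.g.\ $A'''$; the coercivity bound (\ref{Coercivitybounds1}) is available on any set compactly contained in~$A$, so this is harmless.
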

\begin{proof}
Thanks to Proposition \ref{InnerregProp}, we may assume that $A \subset\subset \Omega$. We first prove
\begin{align*}
F''(u,A) \geq F''(v,A)
\end{align*}
 Given $\delta >0$ there exist $A_\delta \subset \subset A$ such that
\begin{align*}
|A \setminus \overline{A_\delta}| + || \nabla u||_{L^p(\Omega;\mathbb{R}^{n\times N})}^p \leq \delta
\end{align*}
Let $v_\varepsilon : Z_\varepsilon(\Omega) \to \mathbb{R}^n$, $u_\varepsilon : Z_\varepsilon(\Omega) \to \mathbb{R}^n$ be such that $v_\varepsilon \to v$ and $u_\varepsilon \to u$ in $L^p(\Omega;\mathbb{R}^n)$ and
\begin{align*}
\limsup_{\varepsilon \to 0^+} F_\varepsilon(u_\varepsilon,A) &= F''(u,A) \\
\limsup_{\varepsilon \to 0^+} F_\varepsilon(v_\varepsilon,A \setminus \overline{A_\delta}) = F''(v, A \setminus \overline{A_\delta}) &\leq C \left(|A \setminus \overline{A_\delta}| + || \nabla u||_{L^p(\Omega;\mathbb{R}^{n\times N})}^p\right) \leq C\delta
\end{align*}
Performing the same cut-off construction as in Proposition \ref{Subad} we obtain a function $w_\varepsilon$ converging to $v$ in $L^p(\Omega;\mathbb{R}^n)$ such that for $\varepsilon >0$ small enough we obtain
\begin{align*}
F_\varepsilon(w_\varepsilon,A') \leq F_\varepsilon(u_\varepsilon,A) + F_\varepsilon(v_\varepsilon, A \setminus \overline{A_\delta}) + \frac{C_\delta}{K} + K^pO(\varepsilon)
\end{align*}
for some $A' \subset\subset A$. Taking $\varepsilon \to 0^+$ we obtain
\begin{align*}
F''(v,A') \leq F''(u,A) + \frac{C_\delta}{K} +C\delta
\end{align*} 
Letting $K \to +\infty$ and $\delta \to 0$ we obtain the desired inequality. Exchanging the roles of $u$ and $v$ we obtain the other inequality.
\end{proof}
\begin{proof}[Proof of Theorem {\rm \ref{Compactness}}]  By the compactness property of $\Gamma$-convergence there exists a subsequence $\varepsilon_{j_k} $ of $\varepsilon_j$ such that for any $(u,A) \in W^{1,p}(\Omega;\mathbb{R}^n) \times \mathcal{A}(\Omega)$ there exists
\begin{align*}
\Gamma(L^p)\text{-}\lim_{k } F_{\varepsilon_{j_k}}(u,A) =: F(u,A)
\end{align*}
 (see \cite{BDF} Theorem 10.3). Moreover, by Proposition \ref{CoercivityProp} we have that
 \begin{align*}
\Gamma(L^p)\text{-}\lim_{k } F_{\varepsilon_{j_k}}(u) = +\infty
 \end{align*}
 for any $u \in L^p(\Omega;\mathbb{R}^n) \setminus W^{1,p}(\Omega;\mathbb{R}^n)$. So it suffices to check that for every $(u,A) \in W^{1,p}(\Omega;\mathbb{R}^n) \times \mathcal{A}(\Omega)$, $F(u,A)$ satisfies all the hypothesis of Theorem 2.2 in \cite{AC}. In fact the superaditivity property of $F_\varepsilon(u,\cdot)$ is conserved in the limit. Thus, as an consequence of Propositions (\ref{CoercivityProp})--(\ref{LocalityProp}) and thanks to De Giorgi-Letta Criterion (see \cite{BDF}), hypotheses (i), (ii), (iii) hold true. Moreover, since $F_\varepsilon(u,A)$ is translationally invariant, hypothesis (iv) is satisfied and finally, by the lower semicontinuity property of $\Gamma$-limit, also hypothesis (v) is fulfilled.
\end{proof}

\section{Treatment of Dirichlet boundary data }\label{DBT}
In order to recover the limiting energy density we will establish the next lemma which asserts that our energies still converge if we suitably assign affine boundary conditions. From this, one is able to recover the  value of $f$ in Theorem \ref{Compactness} by a blow-up argument. Given $M \in \mathbb{R}^{n\times N}$,$m \in \mathbb{N}$, $\varepsilon > 0$ and $A\in \mathcal{A}^{reg} (\Omega)$ set
\begin{align}
\mathcal{A}_\varepsilon^{M,m}(A;\mathbb{R}^n) = \left\{u \in \mathcal{A}_\varepsilon(\Omega;\mathbb{R}^n) : u(i) =Mi \text{ if } (i + [-m\varepsilon,m\varepsilon)^N) \cap A^c \neq \emptyset  \right\}
\end{align}
For $M \in \mathbb{R}^{d \times N}, m \in \mathbb{N}$ we define $F_\varepsilon^{M,m} : L^p(\Omega;\mathbb{R}^n) \times \mathcal{A}^{reg}(\Omega) \to [0,+\infty]$ by
\begin{align*}
F^{M,m}_\varepsilon(u,A) = \begin{cases}
F(u,A) &\text{if } u \in \mathcal{A}_\varepsilon^M(A;\mathbb{R}^n) \\
+\infty &\text{otherwise.}
\end{cases}
\end{align*}

\begin{proposition}\label{BoundaryGammaconvergencelemma} Let $\phi_i^\varepsilon : (\mathbb{R}^n)^{Z_\varepsilon(\Omega)} \to [0,+\infty)$ satisfy {\rm(H1)--(H5)}. Let $\varepsilon_{j_k}$ and $f$ be as in Theorem {\rm\ref{Compactness}}. For any $M \in \mathbb{R}^{d \times N}$ and $A \in \mathcal{A}^{reg}(\Omega)$ we set $F^{M} : L^p(\Omega;\mathbb{R}^n) \times \mathcal{A}^{reg}(\Omega) \to [0,+\infty]$ by
\begin{align*}
F^M(u,A) = \begin{cases} \displaystyle\int_A f(x,\nabla u)\mathrm{d}x &\text{if } u-Mx \in W^{1,p}_0(A;\mathbb{R}^n)\\
+\infty &\text{otherwise.}
\end{cases}
\end{align*}
Then for any $M \in \mathbb{R}^{d \times N},m \in \mathbb{N}$ and any $A \in \mathcal{A}^{reg}$ we have that $F^{M,m}_{\varepsilon_{j_k}}(\cdot,A)$ $\Gamma$-converges with respect to the strong $L^p(\Omega;\mathbb{R}^n)$-topology to the functional $F^M(\cdot,A)$.

\end{proposition}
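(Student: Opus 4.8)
The plan is to fix $A\in\mathcal{A}^{reg}(\Omega)$ and the subsequence $(\varepsilon_{j_k})$ of Theorem~\ref{Compactness}, which for brevity I denote again by $\varepsilon$, and to prove the two $\Gamma$-convergence inequalities with respect to the strong $L^p(\Omega;\mathbb{R}^n)$-topology; since $F^{M,m}_\varepsilon(\cdot,A)$ coincides with $F_\varepsilon(\cdot,A)$ on its effective domain and a $\Gamma$-limit is automatically lower semicontinuous, this suffices. \emph{Liminf inequality.} Let $u_\varepsilon\in\mathcal{A}^{M,m}_\varepsilon(A;\mathbb{R}^n)$ converge to $u$ in $L^p(\Omega;\mathbb{R}^n)$, and assume $\liminf_\varepsilon F_\varepsilon(u_\varepsilon,A)<+\infty$. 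By the $\Gamma$-convergence of $F_\varepsilon(\cdot,A)$ to $\int_A f(x,\nabla\cdot)\,\mathrm{d}x$ (Theorem~\ref{Compactness}) we get at once $\liminf_\varepsilon F^{M,m}_\varepsilon(u_\varepsilon,A)\ge\int_A f(x,\nabla u)\,\mathrm{d}x$, so it remains to check $u-Mx\in W^{1,p}_0(A;\mathbb{R}^n)$. To this end I would consider $\psi_\varepsilon:=u_\varepsilon-Mx$ as a lattice function and extend it by $0$ to all of $\varepsilon\mathbb{Z}^N$. By definition of $\mathcal{A}^{M,m}_\varepsilon(A;\mathbb{R}^n)$, $\psi_\varepsilon$ vanishes at every node within $\ell^\infty$-distance $m\varepsilon$ of $A^c$, so every nonzero difference $D^{e_n}_\varepsilon\psi_\varepsilon(i)$ is controlled by $C(|D^{e_n}_\varepsilon u_\varepsilon(i)|^p+|M|^p)$ with $i,i+\varepsilon e_n$ both near $Z_\varepsilon(A)$; hence, by (H3),
\begin{align*}
\sum_{i\in\varepsilon\mathbb{Z}^N}\sum_{n=1}^N\varepsilon^N|D^{e_n}_\varepsilon\psi_\varepsilon(i)|^p\le C\sum_{i\in Z_\varepsilon(A)}\sum_{n=1}^N\varepsilon^N|D^{e_n}_\varepsilon u_\varepsilon(i)|^p+C|M|^p|A|\le C.
\end{align*}
Since the interpolations of $\psi_\varepsilon$ converge in $L^p(\mathbb{R}^N;\mathbb{R}^n)$ to the zero-extension of $u-Mx$, the slicing characterization of Sobolev functions (exactly as in the coercivity proof of Proposition~\ref{CoercivityProp}) gives that this extension lies in $W^{1,p}(\mathbb{R}^N;\mathbb{R}^n)$, which, $\partial A$ being Lipschitz, means $u-Mx\in W^{1,p}_0(A;\mathbb{R}^n)$; the $\liminf$ inequality follows.

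\emph{Limsup inequality: reduction.} It suffices to treat $u$ with $F^M(u,A)<+\infty$, i.e.\ $u-Mx\in W^{1,p}_0(A;\mathbb{R}^n)$, understood with the zero-extension, so that $u=Mx$ a.e.\ on $\Omega\setminus A$. Writing $u=Mx+\varphi$ and approximating $\varphi\in W^{1,p}_0(A)$ strongly in $W^{1,p}$ by functions in $C^\infty_c(A)$ — for which $\int_A f(x,\nabla(Mx+\varphi))\,\mathrm{d}x$ is continuous, by the $p$-growth of the Carath\'eodory integrand $f$ — and using the lower semicontinuity of $(F^{M,m})''(\cdot,A)$, we may assume $\varphi\in C^\infty_c(A)$; thus $u=Mx$ on $(\Omega\setminus A)\cup\{x\in\Omega:\mathrm{dist}_\infty(x,\partial A)<3\rho\}$ for some $\rho>0$. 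Let $v_\varepsilon\to u$ in $L^p(\Omega;\mathbb{R}^n)$ be a recovery sequence for $F_\varepsilon(\cdot,A)$ (Theorem~\ref{Compactness}); then $F_\varepsilon(v_\varepsilon,A)\to\int_A f(x,\nabla u)\,\mathrm{d}x$, $\sum_{i\in Z_\varepsilon(A)}\sum_n\varepsilon^N|D^{e_n}_\varepsilon v_\varepsilon(i)|^p\le C$ by (H3), and $v_\varepsilon-Mx\to0$ in $L^p(\{\mathrm{dist}_\infty(\cdot,\partial A)<3\rho\};\mathbb{R}^n)$.

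\emph{Limsup inequality: construction.} I would then run the cut-off construction of Proposition~\ref{Subad}: for $k\in\{K,\dots,2K-1\}$ pick $\varphi_k\in C^\infty(\Omega)$ with $\varphi_k=1$ on $\{x\in A:\mathrm{dist}_\infty(x,A^c)>2\rho\}$, $\varphi_k=0$ on $(\Omega\setminus A)\cup\{x\in\Omega:\mathrm{dist}_\infty(x,A^c)<\rho_0\}$ with $\rho_0:=\rho/(2K)$, transition within an annular layer $L_k\subset A\cap\{\rho_0<\mathrm{dist}_\infty(\cdot,A^c)<2\rho\}$ of width $\asymp\rho/K$ and $\|\nabla\varphi_k\|_\infty\le CK/\rho$, and set $u^k_\varepsilon(i):=\varphi_k(i)v_\varepsilon(i)+(1-\varphi_k(i))Mi$. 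Each $u^k_\varepsilon$ converges to $u$ in $L^p(\Omega;\mathbb{R}^n)$, equals $Mi$ where $\varphi_k=0$, hence belongs to $\mathcal{A}^{M,m}_\varepsilon(A;\mathbb{R}^n)$ once $m\varepsilon<\rho_0$, so $F^{M,m}_\varepsilon(u^k_\varepsilon,A)=F_\varepsilon(u^k_\varepsilon,A)$. Splitting $Z_\varepsilon(A)$ into the nodes near which $\varphi_k\equiv1$ (where (H4) bounds $\phi^\varepsilon_i(\{(u^k_\varepsilon)_{j+i}\})$ by $\phi^\varepsilon_i(\{(v_\varepsilon)_{j+i}\})$ plus a term summing to $O(\varepsilon)$ via (\ref{Assumptions on Cj})), those near which $\varphi_k\equiv0$ (where (H4) and (H2) bound it by $C(|M|^p+1)$ plus $O(\varepsilon)$, and these nodes fill a set of measure $\le C_A\rho$ since $\partial A$ is Lipschitz), and those in the transition zone $T_k\approx(L_k)_\delta$ (where (H5) applies with $z=v_\varepsilon$, $w=Mx$, $\psi=\varphi_k$), then summing and averaging over $k$ exactly as in Proposition~\ref{Subad} — using the bounded overlap of the $T_k$, the bound $\phi^\varepsilon_i(\{(Mx)_{j+i}\})\le C(|M|^p+1)$ from (H2), (H3) and Lemma~3.6 of \cite{AC} to absorb the $|D^\xi_\varepsilon v_\varepsilon|^p$-terms of $R^\varepsilon_i$, and the summability of the $C^{j,\xi}_\varepsilon$, $C^{j,\xi}_{\varepsilon,\delta}$ — one arrives at an estimate of the form
\begin{align*}
\frac1K\sum_{k=K}^{2K-1}F_\varepsilon(u^k_\varepsilon,A)\le F_\varepsilon(v_\varepsilon,A)+C\Bigl(\frac{K}{\rho}\Bigr)^p\|v_\varepsilon-Mx\|_{L^p(\{\mathrm{dist}_\infty(\cdot,\partial A)<3\rho\};\mathbb{R}^n)}^p+C(|M|^p+1)\rho+\frac{C(u)}{K}+(K^p+1)O(\varepsilon).
\end{align*}
Choosing for each $\varepsilon$ an index $k(\varepsilon)$ below the average, then letting $\varepsilon\to0$, $K\to+\infty$ and $\rho\to0$ in this order, yields $(F^{M,m})''(u,A)\le\int_A f(x,\nabla u)\,\mathrm{d}x=F^M(u,A)$, which together with the $\liminf$ inequality completes the proof.

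\emph{Expected main obstacle.} The delicate point is the $\limsup$ inequality: a generic recovery sequence for $F_\varepsilon(\cdot,A)$ may concentrate energy in any fixed collar of $\partial A$, while the cut-off imposing the affine datum necessarily has gradient of order $(\text{collar width})^{-1}$, which feeds into the remainder $R^\varepsilon_i$ of (H5). As in Proposition~\ref{Subad} this is circumvented by slicing the collar into $K$ thin layers and averaging, so that the contribution of the transition layers to the averaged energy is $O(1/K)$; the preliminary reduction to $u\equiv Mx$ near $\partial A$ is what makes the $\|v_\varepsilon-Mx\|^p$-term vanish as $\varepsilon\to0$ for fixed $K,\rho$. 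The trace step in the $\liminf$ inequality, by contrast, is routine once $u_\varepsilon-Mx$ is extended by zero, since the discrete constraint confines the deviation from $Mx$ to an $m\varepsilon$-neighbourhood of $\partial A$.
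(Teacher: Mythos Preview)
Your proof is correct and follows essentially the same route as the paper: the $\liminf$ inequality via the discrete boundary constraint and the slicing argument of Proposition~\ref{CoercivityProp}, and the $\limsup$ inequality via a density reduction to $\mathrm{supp}(u-Mx)\subset\subset A$ followed by the cut--off/averaging construction of Proposition~\ref{Subad} with $Mx$ playing the role of the second function. The paper's own proof is in fact terser, simply citing ``reasoning as in the proof of Proposition~\ref{InnerregProp}'' and absorbing the collar term $F_\varepsilon(Mx,A\setminus A_\delta)\le C(|M|^p+1)\delta$ via (H2), which is exactly your $C(|M|^p+1)\rho$; your explicit layer--averaging merely unpacks what that citation entails.
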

\begin{proof} We only prove the statement for $m=1$, the other cases being done analogously.

We first prove the $\Gamma$-$\liminf$ inequality. Let $\{u_k\}_k \subset \mathcal{A}_{\varepsilon_{j_k}}(\Omega;\mathbb{R}^n) $ converge to $u$ in the $L^p(\Omega;\mathbb{R}^n)$-topology and be such that
\begin{align*}
\liminf_{k \to \infty} F^{M,1}_{\varepsilon_{{j_k}}}(u_k,A) = \lim_{k \to \infty}  F^M_{\varepsilon_{{j_k}}}(u_k,A) < + \infty.
\end{align*}
Since $u_k \in \mathcal{A}_{\varepsilon_{j_k}}^{M,m}(A;\mathbb{R}^n)$ for all $k \in \mathbb{N}$, and by (H3), we have that $u_k \to Mx $ in $L^p(A \setminus \Omega;\mathbb{R}^n)$ and
\begin{align*}
\sup_{\varepsilon > 0} \sum_{n=1}^N \sum_{i \in Z_\varepsilon(\Omega)} \varepsilon^N |D^{e_n}_\varepsilon u_k(i)|^p < +\infty.
\end{align*}
By the same reasoning as in Proposition \ref{CoercivityProp} $u \in W^{1,p}(\Omega;\mathbb{R}^n) $ and $u-Mx \in W^{1,p}_0(A;\mathbb{R}^n)$.
By Theorem \ref{Compactness} we therefore have
\begin{align*}
\liminf_{k \to \infty} F^{M,m}_{\varepsilon_{{j_k}}}(u_k,A)  \geq \liminf_{k \to \infty} F_{\varepsilon_{{j_k}}}(u_k,A) = F^M(u,A).
\end{align*}
To prove the $\Gamma$-$\limsup$ inequality we may first suppose that $\mathrm{supp}(u-Mx) \subset\subset A$. Let $\{u_k\}_k \subset A_{\varepsilon_{j_k}}(\Omega;\mathbb{R}^n)$ converge to $u$ in $L^p(\Omega;\mathbb{R}^n)$ and be such that
\begin{align*}
\limsup_{k \to \infty} F_{\varepsilon_{j_k}}(u_k,A) = F(u,A).
\end{align*} 
Then by reasoning as in the proof of Proposition \ref{InnerregProp} given $\delta > 0$ we can find $A_\delta \subset A$ and suitable cut-off functions $\varphi_k$ with $\mathrm{supp}(u-Mx) \subset\subset \mathrm{supp } \,   \varphi_k \subset\subset A_\delta$ and $|A\setminus A_\delta| < \delta$ such that for 
\begin{align*}
w_k(i) := \varphi_k(i)u_k(i) + (1- \varphi_k(i))Mi
\end{align*}
we have that $w_k $ converges to $u$ in $L^p(\Omega;\mathbb{R}^n)$ and
\begin{align*}
\limsup_{k \to \infty} F_{\varepsilon_{j_k}}(w_k,A) \leq  \limsup_{k \to \infty} F_{\varepsilon_{j_k}}(u_k,A) + \limsup_{k \to \infty}F_{\varepsilon_{j_k}}(Mx,A \setminus A_\delta) + \delta.
\end{align*}
Using (H2) we have that for every $k \in \mathbb{N}$ it holds
\begin{align*}
F_{\varepsilon_{j_k}}(Mx,A \setminus A_\delta) \leq C(|M|^p +1 )|(A\setminus A_\delta)_\varepsilon| \leq C(|M|^p +1 )| \delta.
\end{align*}
By the definition of the $\Gamma$-$\limsup$ we have that
\begin{align*}
\Gamma\text{-}\limsup_{k \to \infty} F^{M,m}_{\varepsilon_{j_k}}(u,A) \leq F^M(u,A) + C\delta.
\end{align*}
Letting $\delta \to 0$ we obtain the desired inequality. The general case follows by a density argument, approximating every function $u \in W^{1,p}(\Omega;\mathbb{R}^n)$ such that $u-Mx \in W^{1,p}_0(A;\mathbb{R}^n)$ strongly in $W^{1,p}(\Omega;\mathbb{R}^n)$ by functions $u_n$ such that $\mathrm{supp}(u_n-Mx) \subset\subset A$ and using the lower semicontinuity of the $\Gamma$-$\limsup$ as well as the continuity of $F(\cdot,A)$ with respect to the strong convergence in $W^{1,p}(\Omega;\mathbb{R}^n)$.
\end{proof}
\begin{remark}\label{Boundaryconvergencelemma} Let $\phi_i^\varepsilon : (\mathbb{R}^n)^{Z_\varepsilon(\Omega)} \to [0,+\infty)$ satisfy (H1)--(H5), and let $\varepsilon_{j_k}$ be as in Theorem \ref{Compactness}. For any $M \in \mathbb{R}^{d \times N}, m\in \mathbb{N}$ and $A \in \mathcal{A}^{reg}(\Omega)$ we have that
\begin{align*}
\lim_{k\to \infty} \inf \left\{F_{\varepsilon_{j_k}}(u,A) : u \in \mathcal{A}_{\varepsilon_{j_k}}^{M,m}(A;\mathbb{R}^n) \right\} =  \inf \left\{F(u,A) : u -Mx \in W^{1,p}_0(A;\mathbb{R}^n)\right\},
\end{align*}
since the functionals $F^M_\varepsilon$ are coercive with respect to the strong $L^p(\Omega;\mathbb{R}^n)$-topology. 

Note first that by extending the functional as in the proof of Proposition \ref{InnerregProp} we can assume that $A\subset\subset \Omega$.  Moreover, by the boundary conditions and by (H3) any sequence $\{u_k\}_k$ satisfying 
\begin{align*}
\sup_{k} F_{\varepsilon_{j_k}}^{M,m}(u_k,A) <+\infty
\end{align*}
satisfies
\begin{align*}
\sup_{k \in \mathbb{N}} \sum_{n=1}^N \sum_{i \in Z_{\varepsilon_{j_k}}(\Omega)} \varepsilon^N |D^{e_n}_{\varepsilon_{j_k}} u_k(i)|^p < +\infty.
\end{align*}
Then by the boundary conditions, Lemma 3.6 in \cite{AC} and the Riesz-Frech\'et-Kolmogorov Theorem there exists a function $u \in L^p(\Omega;\mathbb{R}^n)$ and a subsequence (not relabelled) that converges to $u$. By Proposition \ref{CoercivityProp} we have that $u \in W^{1,p}(\Omega;\mathbb{R}^n)$. Moreover, $u_k \to Mx$ in $L^p(\Omega\setminus A;\mathbb{R}^n)$ and therefore $u-Mx \in W^{1,p}_0(A;\mathbb{R}^n)$.
This implies the coercivity.
\end{remark}

\section{Homogenization}\label{HOM}
We now consider the case where $i \mapsto \phi_i^\varepsilon$ is periodic, though we have to explain what that means in our case, since the interaction energy at every point of the lattice may depend on the whole configuration of the state $\{z_{j+i}\}_{j \in Z_\varepsilon(\Omega_i)}$. This will be done by using a function $\phi_i : (\mathbb{R}^n)^{\mathbb{Z}^N} \to [0,+\infty)$, $ i \in \mathbb{Z}^N$ defined on the entire lattice. 
In order to define the energy density inside $\Omega$ we assume that $\phi_i$ is approximated
by finite-range interaction. More precisely, we suppose that
there exist $\phi_i^k : (\mathbb{R}^n)^{\mathbb{Z}^N} \to [0,+\infty)$, $ i \in \mathbb{Z}^N$ $T$-periodic, satisfying (H1)--(H3) uniformly in $k$ and 

\medskip

($\text{H}_p$4) ({\em locality}) For all $k \in \mathbb{N}$ and for all $z,w \in \mathcal{A}_1(\mathbb{R}^N,\mathbb{R}^n)$ satisfying $z(j) =w(j) $ for all $j \in \mathbb{Z}^N \cap Q_k(i)$ we have 
\begin{align*}
\phi_i^k(\{z_j\}_{j \in \mathbb{Z}^N}) =  \phi_i^k(\{w_{j}\}_{j \in \mathbb{Z}^N}).
\end{align*} 

($\text{H}_p$5) ({\em controlled non-convexity}) There exist $C>0$ and $\{C^{j,\xi}\}_{j \in \mathbb{Z}^N, \xi \in \mathbb{Z}^N}$, $C^{j,\xi} \geq  0$ satisfying 
\begin{align}\label{Assumptions on Cxik}
\sum_{j,\xi \in \mathbb{Z}^N} C^{j,\xi} <+\infty \text{ and we have } \limsup_{k \to \infty}  \sum_{\max\{|\xi|,|j|\} >k} C^{j,\xi} =0
\end{align} 
such that for all $k \in \mathbb{N}$, $z,w \in \mathcal{A}_1(\mathbb{R}^N,\mathbb{R}^n)$ 
 and $\psi$ cut-off functions we have
\begin{align*}
\phi_i^k(\{\psi_{j}z_{j} + (1-\psi_{j})w_{j}\}_{j \in \mathbb{Z}^N}) \leq & C \left( \phi_i^k( \{z_{j}\}_{j \in \mathbb{Z}^N}) + \phi_i^k( \{w_{j}\}_{j \in \mathbb{Z}^N})\right) \\&+R^k_i(z,w,\psi),
\end{align*}
where
\begin{align*}
R^k_i(z,w,\psi) = &\underset{j+\xi \in \mathbb{Z}^N \cap Q_k(0)} {\sum_{j,\xi \in \mathbb{Z}^N}} C^{j,\xi} \Big((\underset{n \in \{1,\ldots,N\}}{\sup_{k \in \mathbb{Z}^N \cap Q_k(0)}} |D^{e_n}_1 \psi(k)|^p+1) |z(j+\xi)-w(j+\xi)|^p\Big) \\+&\underset{j+\xi \in \mathbb{Z}^N \cap Q_k(0)} {\sum_{j,\xi \in \mathbb{Z}^N}} C^{j,\xi}\Big(  |D^\xi_1 z(j)|^p + |D^\xi_1 w(j)|^p +1 \Big).
\end{align*}

\medskip

($\text{H}_p$6) ({\em closeness})  There exist $\{C_{k}^{j,\xi}\}_{k\in \mathbb{N},j \in \mathbb{Z}^N, \xi \in \mathbb{Z}^N}$, $C^{j,\xi}_{k} \geq C^{j,\xi}_{k+1} \geq 0$ satisfying 
\begin{align}\label{Assumptions on Cjk}
 \limsup_{k \to \infty}  \sum_{j,\xi \in \mathbb{Z}^N} C^{j,\xi}_{k} =0
\end{align}
such that For all $z \in \mathcal{A}_1(\mathbb{R}^N;\mathbb{R}^n)$ and $k_1 \leq k_2$ we have that
\begin{align*}
|\phi_i^{k_1}(\{z_{j}\}_{j \in \mathbb{Z}^N}) - \phi_i^{k_2}(\{z_{j}\}_{j \in \mathbb{Z}^N})| \leq \underset{j+\xi \in \mathbb{Z}^N \cap Q_{k_2}(0)}{\sum_{j,\xi \in \mathbb{Z}^N \cap Q_{k_2}(0)}}C^{j,\xi}_{k_1} \left(|D^\xi_1 z(j)|^p +1\right) .
\end{align*}

\medskip

($\text{H}_p$7) ({\em monotonicity}) For every $k \in \mathbb{N}$, for every $i \in \mathbb{Z}^N$ and for every $z \in \mathcal{A}_1(\mathbb{R}^N;\mathbb{R}^n)$ we have
\begin{align}\label{monotonicity}
\phi_i^k(\{z_{j}\}_{j \in \mathbb{Z}^N}) \leq \phi_i^{k+1}(\{z_{j}\}_{j \in \mathbb{Z}^N}), \quad \phi_i^k(\{z_{j}\}_{j \in \mathbb{Z}^N}) \to \phi_i(\{z_{j}\}_{j \in \mathbb{Z}^N}) \text{ as } k \to \infty.
\end{align}

The monotonicity property ($\text{H}_p$7) may seem restrictive at a first sight, but it is not since by the positivity of $\phi^k$ and $\phi$ respectively we may reorder the interactions in a way that we keep only adding positive interactions with increasing $k$. 

For every $i \in Z_\varepsilon(\Omega)$ we define $\phi_i^\varepsilon : (\mathbb{R}^n)^{Z_\varepsilon(\Omega)} \to [0,+\infty)$ by
\begin{align}\label{Periodicitydef}
\phi_i^\varepsilon(\{z_{j}\}_{j \in Z_\varepsilon(\Omega_i)}) = \phi_{\frac{i}{\varepsilon}}^{\lfloor\frac{d_i}{\varepsilon}\rfloor}(\{z^\varepsilon_{j}\}_{j \in \mathbb{Z}^N }),
\end{align}
where $\mathrm{dist}_\infty(\Omega^c,i)=d_i$ and
\begin{align*}
z^\varepsilon(j)= \begin{cases} \frac{z(\varepsilon j)}{\varepsilon} & j \in Q_{\lfloor\frac{d_i}{\varepsilon}\rfloor}(i)\cap \mathbb{Z}^N\\
0 &\text{otherwise.}
\end{cases}
\end{align*}
  Note that (\ref{Periodicitydef}) is well defined due to the  locality property ($\text{H}_p$4) and Moreover, $\phi_i^\varepsilon$ satisfies (H1)--(H5). Those assumptions are made to avoid the dependence of $\phi_i^\varepsilon$ on $ \Omega $ and still include infinite-range interactions.


\begin{theorem}\label{Homogenization} Let $\phi_i^k  : (\mathbb{R}^n)^{\mathbb{Z}^N} \to [0,+\infty)$ satisfy {\rm (H1)--(H3)} and {\rm ($\text{H}_p$4)--($\text{H}_p$7)} and $\phi_i^\varepsilon : (\mathbb{R}^n)^{Z_\varepsilon(\Omega)} \to [0,+\infty)$ be defined by {\rm(\ref{Periodicitydef})}. Then, $F_\varepsilon : L^p(\Omega;\mathbb{R}^n) \to [0,+\infty]$ $\Gamma$-converges with respect to the strong $L^p(\Omega;\mathbb{R}^n)$-topology to the functional $F : L^p(\Omega;\mathbb{R}^n) \to [0,+\infty]$ defined by 
\begin{align*}
F(u) = \begin{cases}\displaystyle\int_\Omega f_{\mathrm{hom}}(\nabla u)\mathrm{d}x &\text{if } u \in W^{1,p}(\Omega;\mathbb{R}^n) \\
+\infty &\text{otherwise,}
\end{cases}
\end{align*}
where $f_{\mathrm{hom}}: \mathbb{R}^{d \times N}\to [0,\infty)$ is given by
\begin{align}\label{fhom}
f_{\mathrm{hom}}(M) = \lim_{L \to \infty} \frac{1}{L^N} \inf \Big\{\sum_{i \in \mathbb{Z}^N \cap Q_L} \phi_{i}(\{z_{j+i}\}_{j \in \mathbb{Z}^N }) : z \in \mathcal{A}^{M,\lfloor\sqrt{L}\rfloor}_1(Q_L;\mathbb{R}^n) \Big\},
\end{align}
where
\begin{align*}
\mathcal{A}_\varepsilon^{M,m}(Q_L;\mathbb{R}^n) = \left\{u \in \mathcal{A}_\varepsilon(\mathbb{R}^N;\mathbb{R}^n) : u(i) =Mi \text{ if } (i + [-m\varepsilon,m\varepsilon)^N) \cap Q_L^c \neq \emptyset  \right\}.
\end{align*}
\end{theorem}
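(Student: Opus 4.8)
The plan is to combine the abstract compactness and integral-representation result (Theorem \ref{Compactness}) with the convergence of Dirichlet boundary-value problems (Proposition \ref{BoundaryGammaconvergencelemma} and Remark \ref{Boundaryconvergencelemma}) to identify the $\Gamma$-limit, and then to exploit periodicity together with a subadditive-ergodic (De Giorgi--Letta / subadditive sequences) argument to show that the limit density is the homogenized density $f_{\mathrm{hom}}$ defined by \eqref{fhom}. First I would verify that the energy densities $\phi_i^\varepsilon$ defined in \eqref{Periodicitydef} satisfy (H1)--(H5): this is essentially asserted in the text after \eqref{Periodicitydef}, using ($\text{H}_p$4) for well-posedness, the uniform-in-$k$ validity of (H1)--(H3), and ($\text{H}_p$5)--($\text{H}_p$6) to produce the coefficients needed for (H4) and (H5). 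Hence Theorem \ref{Compactness} applies: along any sequence $\varepsilon_j\to0$ there is a subsequence $\varepsilon_{j_k}$ and a Carath\'eodory quasiconvex $f$ with $p$-growth so that $F_{\varepsilon_{j_k}}(\cdot,A)\to\int_A f(x,\nabla u)\,\dx$. The first genuine task is to show that $f$ does \emph{not} depend on $x$: this uses part vi) of Theorem \ref{Theorem1}, and follows from the $T$-periodicity of the $\phi_i^k$ together with (H1) --- translating a cube $Q_\rho(y)$ by a lattice period leaves the discrete energy of an affine function $Mx$ unchanged up to boundary-layer terms that vanish as $\varepsilon\to0$ (controlled through (H4)); so $F(Mx,Q_\rho(y))=F(Mx,Q_\rho(z))$ and $f=f(\xi)$.

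Next I would identify $f=f_{\mathrm{hom}}$ and, in the process, show the limit is independent of the subsequence (whence the full family $\Gamma$-converges). By the blow-up formula coming from Theorem \ref{Compactness} and Remark \ref{Boundaryconvergencelemma}, for a.e.\ $x_0$ and every $M$,
\begin{align*}
f(M)=\lim_{\rho\to0}\frac1{\rho^N}\lim_{k\to\infty}\inf\Big\{F_{\varepsilon_{j_k}}(u,Q_\rho(x_0)):u\in\mathcal A^{M,1}_{\varepsilon_{j_k}}(Q_\rho(x_0);\mathbb R^n)\Big\}.
\end{align*}
Rescaling the lattice by $\varepsilon_{j_k}$ and using \eqref{Periodicitydef} turns the inner minimization into a minimization over $\mathcal A^{M,m}_1(Q_{L}; \mathbb R^n)$ with $L=\rho/\varepsilon_{j_k}\to\infty$, but with energy density $\phi_i^{\lfloor d_i/\varepsilon\rfloor}$ rather than $\phi_i$. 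Here I would insert a two-step comparison: first replace $\phi_i^{\lfloor d_i/\varepsilon\rfloor}$ by $\phi_i^{\lfloor\sqrt L\rfloor}$, which is legitimate because for interior points $d_i/\varepsilon$ is of order $\rho/\varepsilon = L \gg \sqrt L$ and ($\text{H}_p$6) (monotone summable coefficients $C^{j,\xi}_k$) makes the difference negligible after summation and averaging by $L^{-N}$; then let $L\to\infty$ and use ($\text{H}_p$7) (monotone convergence $\phi_i^k\uparrow\phi_i$) together with ($\text{H}_p$6) to pass to $\phi_i$ in the definition \eqref{fhom}. Thus the limit equals the right-hand side of \eqref{fhom}, provided that limit exists.

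Existence of the limit defining $f_{\mathrm{hom}}$ is the step I expect to require the most care. Writing $m_M(Q_L)$ for the infimum in \eqref{fhom} (with the $\lfloor\sqrt L\rfloor$-layer boundary datum $Mx$), one shows that $Q\mapsto m_M(Q)$ is \emph{almost subadditive} over disjoint cubes: given a partition of a large cube into subcubes, one glues together near-optimal competitors on each subcube using cut-off functions between the $Mx$ boundary layers, and (H5) (in its periodic form ($\text{H}_p$5)) controls the interaction energy created across the interfaces, while the thickness $\lfloor\sqrt L\rfloor$ of the boundary layer --- growing but sublinearly --- guarantees that the matching is possible and that the error terms, once divided by $L^N$, tend to $0$. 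Superadditivity (hence a lower bound) comes simply from the positivity of $\phi_i$ and restriction of a competitor on the big cube to the subcubes, again modulo a boundary-layer correction absorbed by (H3) and (H2). Combining these with the uniform bounds $c(|M|^p-1)\le L^{-N}m_M(Q_L)\le C(|M|^p+1)$ from (H2)--(H3), a standard subadditivity argument (as in \cite{GCB}, or the almost-subadditive ergodic theorem in the deterministic periodic case) yields the existence of $\lim_{L\to\infty}L^{-N}m_M(Q_L)=f_{\mathrm{hom}}(M)$, independent of the shape of the exhausting cubes and of the sublinear boundary-layer width. Since this value is intrinsic, it does not depend on the chosen subsequence $\varepsilon_{j_k}$; by the Urysohn property of $\Gamma$-convergence the whole family $F_\varepsilon$ $\Gamma$-converges to $F$, and we may drop the $L^p$-part of the domain because $W^{1,p}(\Omega;\mathbb R^n)=W^{1,p}(\Omega;\mathbb R^n)\cap L^p(\Omega;\mathbb R^n)$ on the bounded set $\Omega$. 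This completes the proof.
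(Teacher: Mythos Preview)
Your proposal is correct and follows essentially the same route as the paper: apply Theorem~\ref{Compactness} for compactness, use periodicity and a translation/cut-off argument to show $f$ is independent of $x$, identify $f(M)$ through the convergence of boundary-value minima (Remark~\ref{Boundaryconvergencelemma}) after rescaling to $Q_L$, control the passage from $\phi_i^{\lfloor d_i/\varepsilon\rfloor}$ to $\phi_i$ via ($\text{H}_p$6)--($\text{H}_p$7), and prove existence of the limit in \eqref{fhom} by a tiling/almost-subadditivity argument, concluding via Urysohn. One small remark: your ``superadditivity'' step is unnecessary and not quite right as stated (restricting a competitor on a big cube to subcubes does not produce admissible competitors with the required boundary data), but this is harmless since almost-subadditivity alone already gives $\limsup_S F_S(M)\le\liminf_L F_L(M)$ and hence existence of the limit, exactly as the paper does.
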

\begin{remark}\label{fhomrecovery} Note that in Theorem \ref{Homogenization} we have that the whole sequence $F_\varepsilon$ $\Gamma$-converges to the limit functional $F$. We fix the boundary conditions of the admissible test functions on a boundary layer of width $\lfloor\sqrt{L}\rfloor$ in order to have the boundary effects negligible while still being able to use a subadditivity argument in order to prove the existence of the limit in (\ref{fhom}). Arguing as in the proof of Proposition \ref{fhomprop} to show that the error goes to $0$ when substituting $\phi_i^k$ with $\phi_i$, and using the fact that the limit energy density is quasi-convex,
we also have
\begin{align*}
f_{\mathrm{hom}}(M) = \lim_{L \to \infty} \frac{1}{L^N} \inf \Big\{\sum_{i \in \mathbb{Z}^N \cap Q_L} \phi_{i}(\{z_{j+i}\}_{j \in \mathbb{Z}^N }) : z \in \mathcal{A}^{M,m}_1(Q_L;\mathbb{R}^n) \Big\}
\end{align*}
for all $m \in \mathbb{N}$ and all $M \in \mathbb{R}^{d \times N}$.
\end{remark}

\begin{proof} By Theorem (\ref{Compactness}) for every sequence $\varepsilon_j$ there exists a subsequence $ \varepsilon_{j_k} $ such that $F_{\varepsilon_{j_k}}$ $\Gamma$-converges to a functional $F$ such that for any $u \in W^{1,p}(\Omega;\mathbb{R}^n)$ and every $A \in \mathcal{A}(\Omega)$ we have
\begin{align*}
\Gamma\text{-}\lim_{k\to \infty} F_{\varepsilon_{j_k}}(u,A) =\int_A f(x,\nabla u)\mathrm{d}x.
\end{align*}
By the Urysohn property of $\Gamma$-convergence the theorem is proved if we show that $f$ does not depend on $x$ and $f = f_{\mathrm{hom}}$. To prove the first claim it suffices to show that
\begin{align*}
F(Mx,Q_\rho(z))= F(Mx,Q_\rho(y))
\end{align*}
for all $M \in \mathbb{R}^{d \times N}$, $z,y \in \Omega$ and $\rho >0$ such that $Q_\rho(z) \cup Q_\rho(y) \subset \Omega$. By symmetry it suffices to prove
\begin{align*}
F(Mx,Q_\rho(z)) \leq F(Mx,Q_\rho(y)).
\end{align*}
By the inner-regularity property it suffices to prove for any $\rho' < \rho$
\begin{align*}
F(Mx,Q_{\rho'}(z))\leq  F(Mx,Q_\rho(y)).
\end{align*}
Let $v_k \in \mathcal{A}_{\varepsilon_{j_k}}(\Omega;\mathbb{R}^n)$ be such that $v_k \to Mx$ in $L^p(\Omega;\mathbb{R}^n)$ and such that
\begin{align*}
\lim_{k \to \infty} F_{\varepsilon_{j_k}}(v_k,Q_\rho(y)) = F(Mx,Q_\rho(y)).
\end{align*}
Let $\varphi \in C^\infty(\Omega) $ be a cut-off function such that $0 \leq \varphi \leq 1$
\begin{align*}
\mathrm{supp}(\varphi) \subset\subset Q_\rho(z), \quad  Q_{\rho'}(z) \subset \subset\{\varphi = 1\}  \text{ and } ||\nabla \varphi ||_\infty \leq \frac{C}{\rho-\rho'}.
\end{align*}
For $k \in \mathbb{N}$ define $u_k \in \mathcal{A}_{\varepsilon_{j_k}}(\Omega;\mathbb{R}^n)$ by
\begin{align*}
u_k(i) =  \varphi(i)\left( v_k\Big(i + \varepsilon_{j_k} T \lfloor \frac{y-z}{T\varepsilon_{j_k}} \rfloor \Big) + M(z-y)\right) +(1-\varphi(i)) Mi.
\end{align*}
Thus by the periodicity assumption and the locality property we have that
\begin{align*}
\sum_{i \in Z_{\varepsilon_{j_k}}(Q_{\rho'}(z))}\varepsilon_{j_k}^N\phi_i^{\varepsilon_{j_k}} (\{(u_k)_{j + i}\}_{ j \in Z_{\varepsilon_{j_k}} (\Omega_i)}) \leq  \sum_{i \in Z_{\varepsilon_{j_k}}(Q_{\rho}(y))}\varepsilon_{j_k}^N\phi_{i}^{\varepsilon_{j_k}} (\{(v_k)_{j + i}\}_{ j \in Z_{\varepsilon_{j_k}} (\Omega_{i})}) + O({\varepsilon_{j_k}}).
\end{align*}

Therefore, we obtain
\begin{align*}
F(Mx,Q_{\rho'}(z)) \leq \liminf_{k \to \infty} F_{\varepsilon_{j_k}}(u_k,Q_{\rho'}(z)) \leq  \liminf_{k \to \infty} F_{\varepsilon_{j_k}}(u_k,Q_{\rho}(y)) = F(Mx,Q_{\rho}(y)).
\end{align*}
In order to obtain that $f =f_{\mathrm{hom}}$ we note that by the lower semicontinuity with respect to the strong $L^p(\Omega;\mathbb{R}^n)$-topology and the coercivity of $F$ we obtain that $F$ is lower semicontinuous with respect to the weak $W^{1,p}(\Omega;\mathbb{R}^n)$-topology and hence $f$ is quasiconvex. By the growth properties of $f$ and Remark \ref{Boundaryconvergencelemma} we obtain for $Q=Q_\rho(x_0) \subset \subset \Omega $ 
\begin{align*}
f(M) &= \frac{1}{\rho^N}\inf\Big\{ \int_Q f(\nabla u )\mathrm{d}x : u - Mx \in W^{1,p}_0(Q;\mathbb{R}^n) \Big\} \\&=\frac{1}{\rho^N}\inf\Big\{ F(u,Q) : u - Mx \in W^{1,p}_0(Q;\mathbb{R}^n) \Big\} \\&= \lim_{m \to \infty} \lim_{k \to \infty} \frac{1}{\rho^N}\inf\Big\{ F_{\varepsilon_{j_k}}(u,Q) : u \in \mathcal{A}^{M,m}_{\varepsilon_{j_k}}(Q;\mathbb{R}^n) \Big\}\\&=f_{\mathrm{hom}}(M).
\end{align*}
Where the last inequality follows from the next proposition.
\end{proof}
\begin{proposition}\label{fhomprop}  Let $\phi_i^k  : (\mathbb{R}^n)^{\mathbb{Z}^N} \to [0,+\infty)$ satisfy {\rm(H1)--(H3)} and {\rm($\text{H}_p$4)--($\text{H}_p$7)}, and $\phi_i^\varepsilon : (\mathbb{R}^n)^{Z_\varepsilon(\Omega)} \to [0,+\infty)$ be defined by {\rm(\ref{Periodicitydef})}. Then
\begin{align*}
 f_{\mathrm{hom}}(M)= \lim_{m \to \infty}\lim_{k \to \infty} \frac{1}{\rho^N}\inf\Big\{ F_{\varepsilon_{j_k}}(u,Q) : u \in \mathcal{A}^{M,m}_{\varepsilon_{j_k}}(Q;\mathbb{R}^n) \Big\}
\end{align*}
for all $M \in \mathbb{R}^{n \times N}$.
\end{proposition}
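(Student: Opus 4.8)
\emph{Overall approach.} The plan is to \emph{unfold} the scaling so that the minimum of $F_{\varepsilon_{j_k}}(\cdot,Q)$ over functions with affine boundary data on a layer of width $m$ becomes, up to errors that vanish as $k\to\infty$, the minimum over a large cube $Q_{L}$, $L\sim\rho/\varepsilon_{j_k}$, of $L^{-N}\sum_{i\in\mathbb Z^N\cap Q_L}\phi_i^{k_i}(\{z_{j+i}\})$ with $z\in\mathcal A_1^{M,m}(Q_L)$, and then to remove the dependence on the truncation exponents $k_i$ using the monotone approximation $\phi_i^k\uparrow\phi_i$ together with the fact that, since $Q\subset\subset\Omega$, all $k_i$ are uniformly large. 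Throughout write $\varepsilon=\varepsilon_{j_k}$, fix $Q=Q_\rho(x_0)\subset\subset\Omega$ and set $\eta_0:=\mathrm{dist}_\infty(Q,\Omega^c)>0$. \emph{Step 1} is to establish that the limit defining $f_{\mathrm{hom}}$ exists together with its finite-range analogues: for fixed $k$, $\phi_i^k$ is $T$-periodic, of finite range by ($\text{H}_p$4) and satisfies (H1)--(H3), so inserting the affine competitor $z(i)=Mi$ and using (H2), then (H3), gives for every near-optimal competitor $z\in\mathcal A_1^{M,m}(Q_L)$ the uniform bound $\sum_{n=1}^N\sum_{i\in\mathbb Z^N\cap Q_L}|D^{e_n}_1 z(i)|^p\le C(|M|^p+1)L^N$; since affine data match automatically, near-optimal competitors on adjacent subcubes glue with no cut-off, and a Fekete-type subadditivity argument yields $f^k_{\mathrm{hom}}(M):=\lim_{L\to\infty}g^k_L(M)$, where $g^k_L(M):=L^{-N}\inf\{\sum_{i\in\mathbb Z^N\cap Q_L}\phi^k_i(\{z_{j+i}\}):z\in\mathcal A^{M,m}_1(Q_L)\}$. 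Moreover, by modifying competitors in a boundary annulus of vanishing relative volume (using (H2) there and a discrete Poincar\'e-type control for the straddling interactions), this limit is the same for every admissible layer-width sequence, in particular for the $\lfloor\sqrt L\rfloor$ of (\ref{fhom}) and for any fixed $m$; the same scheme with $\phi_i$ in place of $\phi_i^k$ gives existence of $\lim_L g_L(M)$, which coincides with the limit in (\ref{fhom}) and is independent of the layer width, and we call it $f_{\mathrm{hom}}(M)$. Finally, by ($\text{H}_p$7), $\phi^k_i\le\phi^{k+1}_i\le\phi_i$, so $f^k_{\mathrm{hom}}(M)$ is nondecreasing in $k$ and $\le f_{\mathrm{hom}}(M)$; by ($\text{H}_p$6), $0\le\phi_i-\phi^k_i$ is bounded, on such competitors, by $\sum_{j,\xi}C^{j,\xi}_k(|D^\xi_1 z(j)|^p+1)$, and summing over $\mathbb Z^N\cap Q_L$, dividing by $L^N$, and combining with the gradient bound and the discrete interpolation estimate of Lemma 3.6 in \cite{AC} (to replace $D^\xi$ by the $D^{e_n}$), one obtains $f_{\mathrm{hom}}(M)-f^k_{\mathrm{hom}}(M)\to0$, i.e. $f^k_{\mathrm{hom}}(M)\uparrow f_{\mathrm{hom}}(M)$.

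\emph{Step 2 (the unfolding identity).} For $u\in\mathcal A^{M,m}_\varepsilon(Q)$, definition (\ref{Periodicitydef}) and the locality ($\text{H}_p$4) give
\[
F_\varepsilon(u,Q)=\varepsilon^N\!\!\sum_{\tilde i\in\mathbb Z^N:\,\varepsilon\tilde i\in Q}\!\!\phi^{\,\lfloor d_{\varepsilon\tilde i}/\varepsilon\rfloor}_{\tilde i}\bigl(\{u^\varepsilon_{j+\tilde i}\}_{j\in\mathbb Z^N}\bigr),\qquad u^\varepsilon(l):=\tfrac1\varepsilon u(\varepsilon l),
\]
where $d_i=\mathrm{dist}_\infty(\Omega^c,i)\ge\eta_0$, so every exponent is $\ge K_\varepsilon:=\lfloor\eta_0/\varepsilon\rfloor\to\infty$, and $u^\varepsilon\in\mathcal A^{M,m}_1(\varepsilon^{-1}Q)$. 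By the $T$-periodicity of $i\mapsto\phi_i^k$ we translate $\varepsilon^{-1}Q$ onto $Q_{L_\varepsilon}$ with $L_\varepsilon:=\lfloor\rho/\varepsilon\rfloor$ (the $O(L_\varepsilon^{N-1})$ lattice points changed by this translation contribute $O(L_\varepsilon^{N-1})(|M|^p+1)$ to the energy, the competitor being affine there), obtaining
\[
\tfrac1{\rho^N}F_\varepsilon(u,Q)=(1+o(1))\,\tfrac1{L_\varepsilon^{N}}\!\!\sum_{i\in\mathbb Z^N\cap Q_{L_\varepsilon}}\!\!\phi^{k_i(\varepsilon)}_i(\{z_{j+i}\})+o(1),
\]
with $K_\varepsilon\le k_i(\varepsilon)$ and $z\in\mathcal A^{M,m}_1(Q_{L_\varepsilon})$ the translated $u^\varepsilon$; conversely every $z\in\mathcal A^{M,m}_1(Q_{L_\varepsilon})$ arises, via the inverse rescaling $u(i):=\varepsilon z(i/\varepsilon)$, from an admissible $u\in\mathcal A^{M,m}_\varepsilon(Q)$ satisfying the same identity. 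One checks that all the $o(1)$-errors are uniform over competitors with the gradient bound of Step 1.

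\emph{Step 3 (the two inequalities).} For the upper bound, given $\eta>0$ take $z$ $\eta$-optimal for $g_{L_\varepsilon}(M)$ (the cell minimum with $\phi_i$ and layer width $m$, which by Step 1 tends to $f_{\mathrm{hom}}(M)$) and let $u\in\mathcal A^{M,m}_\varepsilon(Q)$ be its inverse rescaling; since $\phi^{k_i(\varepsilon)}_i\le\phi_i$ by ($\text{H}_p$7), Step 2 gives $\rho^{-N}\inf\{F_\varepsilon(\cdot,Q):\mathcal A^{M,m}_\varepsilon(Q)\}\le\rho^{-N}F_\varepsilon(u,Q)\le(1+o(1))(g_{L_\varepsilon}(M)+\eta)+o(1)$, hence $\limsup_{k}\rho^{-N}\inf\{F_{\varepsilon_{j_k}}(\cdot,Q):\mathcal A^{M,m}_{\varepsilon_{j_k}}(Q)\}\le f_{\mathrm{hom}}(M)+\eta$, and $\eta\to0$. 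For the lower bound, fix $K\in\mathbb N$; for $k$ large $K_\varepsilon\ge K$, so $\phi^{k_i(\varepsilon)}_i\ge\phi^K_i$ by ($\text{H}_p$7), and taking $u$ nearly optimal for the left side, Step 2 gives $\rho^{-N}F_\varepsilon(u,Q)\ge(1-o(1))g^K_{L_\varepsilon}(M)-o(1)\to f^K_{\mathrm{hom}}(M)$, hence $\liminf_{k}\rho^{-N}\inf\{F_{\varepsilon_{j_k}}(\cdot,Q):\mathcal A^{M,m}_{\varepsilon_{j_k}}(Q)\}\ge f^K_{\mathrm{hom}}(M)$; letting $K\to\infty$ and invoking $f^K_{\mathrm{hom}}(M)\uparrow f_{\mathrm{hom}}(M)$ from Step 1 gives the matching bound. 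Thus $\lim_{k}\rho^{-N}\inf\{F_{\varepsilon_{j_k}}(\cdot,Q):\mathcal A^{M,m}_{\varepsilon_{j_k}}(Q)\}=f_{\mathrm{hom}}(M)$ for every $m$, so the iterated limit in the statement equals $f_{\mathrm{hom}}(M)$ as well.

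\emph{Main obstacle.} The technical heart is the boundary-layer bookkeeping in Step 1 --- proving the cell minimum is independent of the layer width, which hinges on the layer having vanishing relative volume while the affine data keep the gluing exact and (H2) keeps the layer energy of order $L^{N-1}$ --- together with the point, crucially using $Q\subset\subset\Omega$, that the truncation exponents $\lfloor d_i/\varepsilon\rfloor$ produced by unfolding are \emph{uniformly} large over $Q$, so that ($\text{H}_p$6)--($\text{H}_p$7) let one sandwich $\phi_i^{k_i(\varepsilon)}$ between a fixed $\phi_i^K$ and $\phi_i$ with a gap that is controlled and closes as $K\to\infty$. A secondary care point is verifying that every $o(1)$ in the unfolding identity is uniform over the (near-optimal, hence gradient-bounded) competitors, which is what legitimizes passing the estimates through the infima.
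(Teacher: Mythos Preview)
Your proposal is correct and follows essentially the same route as the paper: rescale (unfold) so that the constrained minimum on $Q$ becomes a cell problem on $Q_{L_\varepsilon}$, use ($\text{H}_p$7) for the upper bound, and combine ($\text{H}_p$6) with the discrete interpolation estimate (Lemma~3.6 in \cite{AC}) together with the coercivity bound from (H2)--(H3) to close the lower bound, then prove existence of the limit in (\ref{fhom}) and its independence from the layer width via subadditivity. The only organizational difference is that you package the lower bound through the auxiliary finite-range cell limits $f^K_{\mathrm{hom}}$ and the sandwich $\phi_i^K\le\phi_i^{k_i(\varepsilon)}\le\phi_i$, proving $f^K_{\mathrm{hom}}\uparrow f_{\mathrm{hom}}$ separately, whereas the paper applies ($\text{H}_p$6) directly on near-minimizers to compare $\phi_{i'}^{\lfloor d_{i'}^k\rfloor}$ with $\phi_{i'}$; the estimates needed (your ``gradient bound plus Lemma~3.6'' step, the paper's (5.7)--(5.12)) are the same.
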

\begin{proof} Without loss of generality, assume $x_0=0$. We perform a change of variables 
\begin{align*}
i' = \frac{i}{\varepsilon_{j_k}}, \quad \tilde{u}(i') = \frac{1}{\varepsilon_{j_k}} u(\varepsilon_{j_k} i'), \quad L_k = \frac{\rho}{\varepsilon_{j_k}}.
\end{align*}
Set $d_{i'}^k= \mathrm{dist}(\frac{1}{\varepsilon_{j_k}}\Omega^c,i')$. We obtain
\begin{align*}
&\lim_{m \to \infty}\lim_{k \to \infty} \frac{1}{\rho^N}\inf\Big\{ F_{\varepsilon_{j_k}}(u,Q) : u \in \mathcal{A}^{M,m}_{\varepsilon_{j_k}}(Q;\mathbb{R}^n) \Big\}  \\= &\lim_{m \to \infty}\lim_{k \to \infty} \frac{1}{L_k^N} \inf\Big\{ \sum_{i' \in \mathbb{Z}^N \cap Q_L} \phi_{i'}^{\lfloor d_{i'}^k\rfloor}(\{\tilde{u}_{j+i'}\}_{j \in \mathbb{Z}^N}) : \tilde{u} \in \mathcal{A}_1^{M,m}(Q_{L_k};\mathbb{R}^n)\Big\}.
\end{align*} 
By the monotonicity property and (H2) we have that
\begin{align*}
C(|M|^p+1)&\geq \lim_{m \to \infty}\lim_{k \to \infty} \frac{1}{L_k^N} \inf\Big\{ \sum_{i' \in \mathbb{Z}^N \cap Q_L} \phi_{i'}(\{\tilde{u}_{j+i'}\}_{j \in \mathbb{Z}^N})  : \tilde{u} \in \mathcal{A}_1^{M,m}(Q_L;\mathbb{R}^n)\Big\}\\&\geq \lim_{m \to \infty}\lim_{k \to \infty} \frac{1}{L_k^N} \inf\Big\{ \sum_{i' \in \mathbb{Z}^N \cap Q_L} \phi_{i'}^{\lfloor d_{i'}^k\rfloor}(\{\tilde{u}_{j+i'}\}_{j \in \mathbb{Z}^N})  : \tilde{u} \in \mathcal{A}_1^{M,m}(Q_L;\mathbb{R}^n)\Big\}.
\end{align*}
On the other hand, let $u_k \in \mathcal{A}_1^{M,m}(Q_L;\mathbb{R}^n)$ be such that
\begin{align*}
 \sum_{i' \in \mathbb{Z}^N \cap Q_{L_k}} \phi_{i'}^{\lfloor d_{i'}^k\rfloor}(\{(u_k)_{j+i'}\}_{j \in \mathbb{Z}^N})  \leq\inf\Big\{ \sum_{i' \in \mathbb{Z}^N \cap Q_{L_k}} \phi_{i'}^{\lfloor d_{i'}^k\rfloor}(\{\tilde{u}_{j+i'}\}_{j \in \mathbb{Z}^N})  : \tilde{u} \in \mathcal{A}_1^{M,m}(Q_{L_k};\mathbb{R}^n)\Big\} +\frac{1}{k}
\end{align*}
Now by ($\text{H}_p$6) and setting $\displaystyle d_k = \lfloor\frac{\mathrm{dist}(Q,\Omega^c)}{\varepsilon_{j_k}} \rfloor$ we obtain $d_k \to \infty$, since $Q \subset\subset \Omega$, and
\begin{align*}
\sum_{i' \in \mathbb{Z}^N \cap Q_{L_k}} \phi_{i'}(\{(u_k)_{j+i'}\}_{j \in \mathbb{Z}^N})  \leq   \sum_{i' \in \mathbb{Z}^N \cap Q_{L_k}} \Big( \phi_{i'}^{\lfloor d_{i'}^k\rfloor}(\{(u_k)_{j+i'}\}_{j \in \mathbb{Z}^N}) + \sum_{j,\xi \in \mathbb{Z}^N} C^{j-i',\xi}_{d_k}( |D^\xi_1 u_k(j)|^p +1) \Big).
\end{align*}
We have that either $j, j + \xi \in \mathbb{Z}^N \setminus Q_{L_k}(0) $ in which case $|D_1^\xi u_k|^p \leq |M|^p $ or $\{j, j+\xi\} \cap Q_{L_k}(0)\neq \emptyset $. Now if $j,j+\xi \in Q_{L_k}(0)$, by [\cite{AC},Lemma 3.6] and (H2), we have that
\begin{align}\label{Bound1}
\nonumber\underset{j,j+\xi \in Q_{L_k}(0)}{\sum_{j \in \mathbb{Z}^N}} |D^\xi_1 u_k(j)|^p &\leq C \sum_{n=1}^N \sum_{j \in \mathbb{Z}^N \cap Q_{L_k}(0)}  |D^{e_n}_1 u_k(j)|^p \\& \leq C \sum_{j \in \mathbb{Z}^N \cap Q_{L_k}(0)} \phi_{j}^{ d_k}(\{(u_k)_{j'+j}\}_{j' \in \mathbb{Z}^N}) \leq C(|M|^p+1) L_k^N.
\end{align}
Now either $j \in Q_{L_k}(0)$, $j+\xi \notin Q_{L_k}(0) $ or  $j \notin Q_{L_k}(0)$, $j+\xi \in Q_{L_k}(0) $. We only deal with the first case, the second one being done analogously. Now if $|\xi|_\infty \leq L_k$, by (H2) and using the boundary conditions, we have that
\begin{align} \label{Bound2}
\nonumber\sum_{j \in \mathbb{Z}^N} |D^\xi_1 u_k(j)|^p  &\leq  \underset{j,j+\xi \in Q_{2L_k}(0)}{\sum_{j \in \mathbb{Z}^N}} |D^\xi_1 u_k(j)|^p \leq C \sum_{n=1}^N \sum_{j \in \mathbb{Z}^N \cap Q_{2L_k}(0)} |D^{e_n}_1 u_k(j)|^p\\&\nonumber\leq C \sum_{j \in \mathbb{Z}^N \cap Q_{L_k}(0)} \phi_{j}^{ d_k}(\{(u_k)_{j'+j}\}_{j' \in \mathbb{Z}^N}) + \sum_{j \in \mathbb{Z}^N \cap Q_{2L_k}(0) \setminus Q_{L_k}(0)} |D^{e_n}_1 u_k(j)|^p \\&\nonumber\leq C \sum_{j \in \mathbb{Z}^N \cap Q_{L_k}(0)} \phi_{j}^{ d_k}(\{(u_k)_{j'+j}\}_{j' \in \mathbb{Z}^N})  + C L_k^N |M|^p \\&\leq C (|M|^p+1) L_k^N.
\end{align}
If $|\xi|_\infty > L_k$ for every $j$ we choose a path $\gamma_\xi^j = (j_h)_{h=1}^{||\xi||_1+1}\subset \mathbb{Z}^N$ by defining 
\begin{align*}
j_{||\xi||_1+1} = j +\xi, j_1 = j, j_{h+1} =j_{h}+e_{n(h)}, e_{n(h)} = \mathrm{sign}(\xi_k) e_k \text{ if } 1+\sum_{n=1}^{k-1} | \xi_n | \leq h \leq \sum_{n=1}^{k} | \xi_n |.
\end{align*}
For this path it holds
\begin{align*}
|D_1^\xi u(j)|^p \leq  \frac{C(p,N)}{||\xi||_1}\sum_{h=1}^{||\xi||_1} |D^{e_n(h) }_1 u(j_h)|^p.
\end{align*}
Now for every $i \in\mathbb{Z}^N$ and for every $n \in \{1,\ldots,N\}$ we set
\begin{align*}
N_{i,n}^{\xi,k} = \Big\{ j  \in Q_{L_k}(0) : \, &\exists h \in \{1,\ldots,|\xi_1|\}, n \in \{1,\ldots,N\}\\&\text{such that } i = j_h \in \gamma^\xi_j \text{ and } e_{n(h)}= \mathrm{sign}(\xi_n) e_n  \}.
\end{align*}
We have that $ \# N_{i,n}^{\xi,k} \leq L_k $ for $i \in \mathbb{Z}^N \cap Q_{L_k}(0)$, using $|D^{e_n}_1 u_k(i)| \leq |M| $ for every $i \in \mathbb{Z}^N \setminus Q_{L_k}(0)$ and using Fubini's Theorem we obtain
\begin{align} \label{Bound3}
\nonumber\sum_{j \in \mathbb{Z}^N \cap Q_{L_k}(0)}|D^\xi_1 u_k(j)|^p &\leq \frac{C}{||\xi||_1}\sum_{j \in \mathbb{Z}^N \cap Q_{L_k}(0)}\sum_{h=1}^{||\xi||_1} |D^{e_n(h) }_1 u_k(j_h)|^p \\&\nonumber\leq  \frac{C}{||\xi||_1}\sum_{n=1}^N\sum_{i \in \mathbb{Z}^N \cap Q_{L_k}(0) } \# N_{i,n}^{\xi,k} |D^{e_n}_1 u_k(i)|^p + |M|^p L_k^N \\& \nonumber\leq C\sum_{n=1}^N\sum_{i \in \mathbb{Z}^N \cap Q_{L_k}(0) }  |D^{e_n}_1 u_k(i)|^p + |M|^p L_k^N  \\&\nonumber \leq C\sum_{j \in \mathbb{Z}^N \cap Q_{L_k}(0)} \phi_{i}^{ d_k}(\{(u_k)_{j+i}\}_{j \in \mathbb{Z}^N}) + |M|^p L_k^N \\&\leq C(|M|^p+1) L_k^N.
\end{align}
Now if $j,j+\xi \in Q_{L_k}(0)$, using Fubini's Theorem and (\ref{Bound1}), we obtain
\begin{align} \label{Bound11}
 \nonumber\sum_{i' \in \mathbb{Z}^N \cap Q_{L_k}(0)} \underset{j,j+\xi \in Q_{L_k}(0)}{\sum_{ j,\xi \in \mathbb{Z}^N}}C^{j-i',\xi}_{d_k} |D^\xi_1 u_k(j)|^p &\leq   \sum_{i',\xi \in \mathbb{Z}^N } C^{j-i',\xi}_{d_k} \underset{j+\xi \in Q_{L_k}(0)}{\sum_{j \in \mathbb{Z}^N \cap Q_{L_k}(0)}}|D^\xi_1 u_k(j)|^p \\&\leq C L_k^N \sum_{i',\xi \in \mathbb{Z}^N} C^{j-i',\xi}_{d_k} (|M|^p+1).
\end{align}

Now if $j \in Q_{L_k}(0)$ $|\xi|_\infty \leq L_k$, using Fubini's Theorem and (\ref{Bound2}), we obtain
\begin{align} \label{Bound12}
 \nonumber\sum_{i' \in \mathbb{Z}^N \cap Q_{L_k}(0)} \underset{|\xi|_\infty \leq L_k}{\underset{j \in Q_{L_k}(0)}{\sum_{ j,\xi \in \mathbb{Z}^N}}}C^{j-i',\xi}_{d_k} |D^\xi_1 u_k(j)|^p &\leq   \underset{|\xi|_\infty \leq L_k}{\sum_{i',\xi \in \mathbb{Z}^N }} C^{j-i',\xi}_{d_k}\sum_{j \in \mathbb{Z}^N \cap Q_{L_k}(0)}|D^\xi_1 u_k(j)|^p \\&\leq C L_k^N \sum_{i',\xi \in \mathbb{Z}^N} C^{j-i',\xi}_{d_k} (|M|^p+1).
\end{align}

If $j \in Q_{L_k}(0)$ $|\xi|_\infty > L_k$, using Fubini's Theorem and (\ref{Bound3}), we obtain
\begin{align} \label{Bound13}
 \nonumber\sum_{i' \in \mathbb{Z}^N \cap Q_{L_k}(0)} \underset{|\xi|_\infty > L_k}{\underset{j \in Q_{L_k}(0)}{\sum_{ j,\xi \in \mathbb{Z}^N}}}C^{j-i',\xi}_{d_k} |D^\xi_1 u_k(j)|^p &\leq   \underset{|\xi|_\infty > L_k}{\sum_{i',\xi \in \mathbb{Z}^N }} C^{j-i',\xi}_{d_k}\sum_{j \in \mathbb{Z}^N \cap Q_{L_k}(0)}|D^\xi_1 u_k(j)|^p \\&\leq C L_k^N \sum_{i',\xi \in \mathbb{Z}^N} C^{j-i',\xi}_{d_k} (|M|^p+1).
\end{align}
Now, dividing by $L_k^N$, using (\ref{Assumptions on Cjk}),(\ref{Bound11})--(\ref{Bound13}) and taking the limit as $k \to \infty$ , we obtain
\begin{align*}
\lim_{k\to \infty} \frac{1}{L_k^N}\sum_{i' \in \mathbb{Z}^N \cap Q_{L_k}}\sum_{j,\xi \in \mathbb{Z}^N} C^{j-i',\xi}_{d_k}( |D^\xi_1 u_k(j)|^p +1)=0
\end{align*}
It remains to show that the limit (\ref{fhom}) exists and
\begin{align}\label{equality}
f_{\mathrm{hom}}(M) = \lim_{m \to \infty}\lim_{L \to \infty} \frac{1}{L^N} \inf\Big\{\sum_{i \in \mathbb{Z}^N \cap Q_L(0)} \phi_i(\{z_{j+i}\}_{j \in \mathbb{Z}^N}) : z \in \mathcal{A}_1^{M,m}(Q_L;\mathbb{R}^n)\Big\}.
\end{align}
Since $\mathcal{A}_1^{M,\lfloor \sqrt{L}\rfloor}(Q_L;\mathbb{R}^n) \subset  \mathcal{A}_1^{M,m}(Q_L;\mathbb{R}^n)$ we have that
\begin{align*}
f_{\mathrm{hom}}(M) \geq \lim_{m \to \infty} \lim_{L \to \infty} \frac{1}{L^N} \inf\Big\{\sum_{i \in \mathbb{Z}^N \cap Q_L(0)} \phi_i(\{z_{j+i}\}_{j \in \mathbb{Z}^N}) : z \in \mathcal{A}_1^{M,m}(Q_L;\mathbb{R}^n)\Big\}.
\end{align*}
On the other hand, for every $u_L \in \mathcal{A}_1^{M,m}(Q_L;\mathbb{R}^n)$, also $ u_L \in \mathcal{A}_1^{M,\lfloor \sqrt{L+\sqrt{L}}\rfloor}(Q_{L+\lfloor\sqrt{L}\rfloor};\mathbb{R}^n)$, so that for $\tilde{L}= L+\lfloor\sqrt{L}\rfloor$ we have
\begin{eqnarray*}
\sum_{i \in \mathbb{Z}^N \cap Q_{\tilde{L}}(0)} \phi_i(\{(u_L)_{j+i}\}_{j \in \mathbb{Z}^N})& =&\sum_{i \in \mathbb{Z}^N \cap Q_{L}(0)} \phi_i(\{(u_L)_{j+i}\}_{j \in \mathbb{Z}^N})\\
&&\qquad + \sum_{i \in \mathbb{Z}^N \cap (Q_{\tilde{L}}(0) \setminus Q_L(0))} \phi_i(\{(u_L)_{j+i}\}_{j \in \mathbb{Z}^N}).
\end{eqnarray*}
Note that $\lim_{L \to \infty} \frac{\tilde{L}}{L}=1$ and therefore we are done if we can show that 
\begin{align*}
\frac{1}{L^N}\sum_{i \in \mathbb{Z}^N \cap (Q_{\tilde{L}}(0) \setminus Q_L(0))} \phi_i(\{(u_L)_{j+i}\}_{j \in \mathbb{Z}^N}) \to 0 
\end{align*}
as $L \to \infty$ and then $m \to \infty$. By the locality property ($\text{H}_p$4) and the boundary conditions we have for all $i \in \mathbb{Z}^N \cap (Q_{\tilde{L}}(0) \setminus Q_L(0))$
\begin{align*}
\phi_i(\{(u_L)_{j+i}\}_{j \in \mathbb{Z}^N}) &\leq\phi_i(\{Mx_{j+i}\}_{j \in \mathbb{Z}^N})  + \sum_{j,\xi \in \mathbb{Z}^N} C^{j-i,\xi}_m(|D^\xi_1 u_L(j)|^p +1) \\&\leq C(|M|^p+1) + \sum_{j,\xi \in \mathbb{Z}^N} C^{j-i,\xi}_m(|D^\xi_1 u_L(j)|^p +1).
\end{align*}
Using similar arguments as for (\ref{Bound11})--(\ref{Bound13}) we obtain
\begin{align}\label{Boundaryzero}
\frac{1}{L^N}\sum_{i \in \mathbb{Z}^N \cap (Q_{\tilde{L}}(0) \setminus Q_L(0)) }\sum_{j,\xi \in \mathbb{Z}^N} C^{j-i,\xi}_m(|D^\xi_1 u_L(j)|^p +1) \to 0
\end{align}
as $L \to \infty$ and then $m \to \infty$ and hence (\ref{equality}). 
We are done if we show that the limit in the definition of (\ref{fhom}) exists. To this end set
\begin{align*}
 F_L(M)=\frac{1}{L^N} \inf \Big\{\sum_{i \in \mathbb{Z}^N \cap Q_L} \phi_{i}(\{z_{j+i}\}_{j \in \mathbb{Z}^N }) : z \in \mathcal{A}^{M,\sqrt{L}}_1(Q_L;\mathbb{R}^n) \Big\}.
\end{align*}
Let $L \in \mathbb{N}$ and let $k \in \mathbb{N}$ be such that $kT \leq L \leq (k+1)T$. For any  $ u \in \mathcal{A}_1^{M,\lfloor \sqrt{L}\rfloor}(Q_{L};\mathbb{R}^n)$ we have that $ u \in \mathcal{A}_1^{M,\lfloor \sqrt{(k+1)T}\rfloor}(Q_{(k+1)T};\mathbb{R}^n)$ and
\begin{align*}
\frac{1}{L^N}\sum_{i \in \mathbb{Z}^N \cap Q_{(k+1)T}(0)} \phi_i(\{u_{j+i}\}_{j \in \mathbb{Z}^N}) \leq &\frac{1}{L^N}\sum_{i \in \mathbb{Z}^N \cap Q_{L}(0)} \phi_i(\{u_{j+i}\}_{j \in \mathbb{Z}^N}) \\& +\frac{1}{L^N}\sum_{i \in \mathbb{Z}^N \cap (Q_{(k+1)T}(0) \setminus Q_L(0)} \phi_i(\{u_{j+i}\}_{j \in \mathbb{Z}^N}),
\end{align*}
where the last term tends to $0$ as $L \to \infty$, again using similar arguments as to prove (\ref{Boundaryzero}). Noting that for every $k \in \mathbb{N}$ the function $u  \in \mathcal{A}_1^{M,\lfloor \sqrt{kT}\rfloor}(Q_{kT};\mathbb{R}^n)$ can also be used as a test function  $ u \in \mathcal{A}_1^{M,\lfloor \sqrt{L}\rfloor}(Q_{L};\mathbb{R}^n)$ in the minimum on $Q_L$ we obtain that
\begin{align*}
\lim_{k \to \infty} F_{kT}(M) = \lim_{L \to \infty} F_L(M).
\end{align*} 
Hence,  we can assume that $L,S \in T\mathbb{N}$, $1<<L<<S$ and $u_L \in \mathcal{A}_1^{M,\lfloor \sqrt{L}\rfloor}(Q_{L};\mathbb{R}^n)$ be such that
\begin{align*}
\frac{1}{L^N}\sum_{i \in \mathbb{Z}^N \cap Q_{L}(0)} \phi_i(\{(u_L)_{j+i}\}_{j \in \mathbb{Z}^N}) \leq F_L(M) +\frac{1}{L}.
\end{align*}
We define $v_S \in \mathcal{A}_1^{M,\lfloor \sqrt{S}\rfloor}(Q_{S};\mathbb{R}^n)$ by
\begin{align*}
v_S(i) = \begin{cases} u_L(i-L k) + L M k &\text{if } i  \in  L k + Q_{L}(0), k \in \{-\frac{1}{2}\lfloor\frac{S-\sqrt{S}}{L}\rfloor,\ldots, \frac{1}{2}\lfloor\frac{S-\sqrt{S}}{L}\rfloor\}^N \\
Mi &\text{otherwise.}
\end{cases}
\end{align*}
By the periodicity assumption and (H4) we have that
\begin{align*}
F_S(M)&\leq \frac{1}{S^N}\sum_{i \in \mathbb{Z}^N \cap Q_{S}(0)} \phi_i(\{(v_S)_{j+i}\}_{j \in \mathbb{Z}^N}) \\&= \frac{L^N}{S^N} \sum_{k \in \{-\frac{1}{2}\lfloor\frac{S-\sqrt{S}}{L}\rfloor,\ldots, \frac{1}{2}\lfloor\frac{S-\sqrt{S}}{L}\rfloor\}^N }\frac{1}{L^N}\sum_{i \in \mathbb{Z}^N \cap Q_{L}(0)}  \phi_{i+kL}(\{(u_L)_{j+i-kL}\}_{j \in \mathbb{Z}^N}) \\&\leq \frac{L^N}{S^N} \Big\lfloor\frac{S-\sqrt{S}}{L}\Big\rfloor^N \frac{1}{L^N}\sum_{i \in \mathbb{Z}^N \cap Q_{L}(0)}  \phi_{i}(\{(u_L)_{j+i}\}_{j \in \mathbb{Z}^N})\\
&\qquad\qquad+ \frac{1}{S^N}\sum_{i \in Q_S(0)} \sum_{j,\xi \in \mathbb{Z}^N}C^{j-i}_{\sqrt{L}} (|D^\xi_1 v_S(j)|^p+1)\\&\leq  \frac{L^N}{S^N} \Big\lfloor\frac{S-\sqrt{S}}{L}\Big\rfloor^N \frac{1}{L^N} F_L(M)+ \frac{1}{S^N}\sum_{i \in Q_S(0)} \sum_{j,\xi \in \mathbb{Z}^N}C^{j-i}_{\sqrt{L}} (|D^\xi_1 v_S(j)|^p+1).
\end{align*}
Now, again using the same arguments as for (\ref{Bound11})--(\ref{Bound13}), we obtain
\begin{align*}
\limsup_{L \to \infty}\limsup_{S \to \infty}\frac{1}{S^N}\sum_{i \in Q_S(0)} \sum_{j,\xi \in \mathbb{Z}^N}C^{j-i}_{\sqrt{L}} (|D^\xi_1 v_S(j)|^p+1)=0
\end{align*}
and therefore, noting that
$
\lim\limits_{L \to \infty}\lim\limits_{S \to \infty}\frac{L^N}{S^N} \Big\lfloor\frac{S-\sqrt{S}}{L}\Big\rfloor^N =1
$,
we get
$ \displaystyle
\limsup_{S \to \infty} F_S(M) \leq \liminf_{L \to \infty} F_L(M)
$
and the claim follows.
\end{proof}

\section{Examples}\label{EXA}
\subsection{The discrete determinant} An example of interactions that can be taken into account with our type of energies are discrete determinants. For $z \in \mathcal{A}_\varepsilon(\Omega;\mathbb{R}^n)$ we define
\begin{align*}
\phi_i^\varepsilon(\{z_{j}\}_{j \in Z_\varepsilon(\Omega_i)}) =\sum_{\xi_1,\ldots , \xi_n \in \mathbb{Z}^N}g^\varepsilon_{\xi_1,\ldots,\xi_n}(\det(D^{\xi_1}_\varepsilon z(0),\ldots , D^{\xi_n}_\varepsilon z(0))) + \sum_{n=1}^N |D_\varepsilon^{e_n} z(0)|^p,
\end{align*}
where $g_{\xi_1,\ldots,\xi_n}^\varepsilon : \mathbb{R} \to [0,\infty)$ satisfy
\begin{align*}
g^\varepsilon_{\xi_1,\ldots,\xi_n}(z) \leq C_{\xi_1,\ldots,\xi_n}(|z|^{\frac{p}{n}}+1)
\end{align*}
 and $C_{\xi_1,\ldots,\xi_n}>0$ satisfy
\begin{align*}
\sum_{\xi_1,\ldots,\xi_n \in \mathbb{Z}^N} C_{\xi_1,\ldots,\xi_n} < +\infty.
\end{align*}
(H1) follows, since $\phi_i^\varepsilon$ does only depend on its difference quotients. Note that by Hadamard's Inequality, the Geometric-Arithmetic mean Inequality and convexity we have
\begin{align*}
|\det(D^{\xi_1}_\varepsilon z(0),\ldots , D^{\xi_n}_\varepsilon z(0))|^{\frac{p}{n}} \leq \Big|\prod_{j=1}^n |D^{\xi_j}_\varepsilon z(0)|^{\frac{1}{n}} \Big|^p \leq \Big|\frac{1}{n}\sum_{j=1}^n |D^{\xi_j}_\varepsilon z(0)| \Big|^p \leq \frac{1}{n}\sum_{j=1}^n |D^{\xi_j}_\varepsilon z(0)|^p.
\end{align*}
Recall $ \displaystyle
\left|\frac{M(i+\varepsilon\xi)-Mi}{\varepsilon|\xi|}\right| \leq |M|
$
and therefore
\begin{align*}
|\det(D^{\xi_1}_\varepsilon z(0),\ldots , D^{\xi_n}_\varepsilon z(0))|^{\frac{p}{n}} \leq |M|^p
\end{align*}
and by summing over $\xi_1, \ldots, \xi_n \in \mathbb{Z}^N$ (H2) follows. (H3) follows since we have exactly the coercivity term in the definition of $\phi_i^\varepsilon$ and the first term is positive.
For $\delta>0$ and $z(j) =w(j) $ in $Z_\varepsilon(Q_\delta(i))$ we have that
\begin{align*}
\phi_i^\varepsilon(\{z_{j}\}_{j \in Z_\varepsilon(\Omega_i)}) \leq \phi_i^\varepsilon(\{w_{j}\}_{j \in Z_\varepsilon(\Omega_i)}) + \underset{\varepsilon|\xi_i|_\infty>\delta}{\sum_{\xi_1,\ldots,\xi_n\in \mathbb{Z}^N}} C_{\xi_1,\ldots,\xi_n}  \frac{1}{n}\sum_{j=1}^n|D^{\xi_j}_\varepsilon z(0)|^p.
\end{align*}
Hence,  by choosing
\begin{align*}
C^{0,\xi}_{\varepsilon,\delta} =  
\underset{\varepsilon|\xi_i|_\infty>\delta\text{ for some } i}{\sum_{\xi\in \{\xi_1,\ldots,\xi_n\} \subset (\mathbb{Z}^N)^n}} \frac{1}{n}C_{\xi,\ldots,\xi_n} , \quad  C^{j,\xi}_{\varepsilon,\delta} =0, j \neq 0
\end{align*}
(H4) follows. Setting
\begin{align*}
C^{0,\xi}_\varepsilon = \sum_{\xi\in \{\xi_1,\ldots,\xi_n\} \subset (\mathbb{Z}^N)^n} \frac{1}{d}C_{\xi,\ldots,\xi_n} , \quad  C^{j,\xi}_{\varepsilon} =0, j \neq 0
\end{align*}
we have that  $C^{j,\xi}_\varepsilon$ satisfies (\ref{Assumptions on Cxi}) and we have
\begin{align*}
\phi_i^\varepsilon(\{z_{j}\}_{j \in Z_\varepsilon(\Omega_i)}) \leq \sum_{\xi \in \mathbb{Z}^N} C^{0,\xi}_\varepsilon ( |D^{\xi}_\varepsilon z(0)|^p +1).
\end{align*}
Note that for all cut-off functions $\psi$ and for all $z,w \in \mathcal{A}_\varepsilon(\Omega;\mathbb{R}^n)$ we have
\begin{align}\label{finitedifferenz}
D^\xi_\varepsilon (\psi z + (1-\psi)w) = \psi(i) D^\xi_\varepsilon z(i) + (1-\psi(i))D^\xi_\varepsilon w(i) + D^\xi_\varepsilon \psi(i) (z(i+\varepsilon\xi)-w(i+\varepsilon\xi))
\end{align}
 and hence (H5) follows by using the convexity of $|\cdot|^p$, $0\leq\psi \leq 1 $ and noting that
 \begin{align} \label{maxbound}
| D^\xi_\varepsilon \psi(i)| \leq \max_{n \in \{1,\ldots,N\}} \sup_{k \in Z_\varepsilon(\Omega)} |D^{e_n}_\varepsilon \psi(k)|.
 \end{align}
 A particular example could be $g^{\varepsilon}_{e_1,e_2}(z)=|z|$ and $g^{\varepsilon}_{\xi_1,\xi_2}(z)=0$ otherwise. More general our Theorems also apply to the case where we take functions $g$ of minors of $\left(D^{\xi_1}_\varepsilon z(0),\ldots, D^{\xi_n} z(0)\right)$ as long as $g$ satisfies appropriate bounds.
\subsection{The linearization of the Lennard-Jones potential}
We assume $N=d=3$. Our result is applicable to show an integral representation if the potential $\phi_i^\varepsilon$ is the linearization of the Lennard-Jones potential, where the Lennard-Jones potential, pictured in Fig. \ref{LJ Potential}, is defined by (up to renormalization)
\begin{align*}
V(r) = \frac{1}{r^{12}}-\frac{2}{r^6}.
\end{align*}
For $\Omega \subset \mathbb{R}^3$ open and smooth we define $E_\varepsilon : L^2(\Omega;\mathbb{R}^3) \to [0,+\infty] $ by
\begin{align*}
E_\varepsilon(u) =\begin{cases}
\sum_{i,j \in Z_\varepsilon(\Omega)} \varepsilon^3 V''\Big(\left| \frac{i-j}{\varepsilon}\right|\Big) \left| \frac{u_i-u_j}{\varepsilon}\right|^2 &\text{if } u \in \mathcal{A}_\varepsilon(\Omega;\mathbb{R}^3)\\
+\infty &\text{otherwise.}
\end{cases}
\end{align*}
In fact heuristically $E_\varepsilon$ can be obtained by linearizing the Lennard-Jones Energy defined by
\begin{align*}
E^{LJ}_\varepsilon(u) =\begin{cases}
\sum_{i,j \in Z_\varepsilon(\Omega)} \varepsilon^3 V\Big(\left| \frac{u_i-u_j}{\varepsilon}\right|\Big) &\text{if } u \in \mathcal{A}_\varepsilon(\Omega;\mathbb{R}^3)\\
+\infty &\text{otherwise,}
\end{cases}
\end{align*}
where the set of admissible deformations $u$ should be close to the identity (neglecting the linear term in the expansion by the assumption that $u(i)=i$ is an equilibrium point). The term 
\begin{align*}
\tilde{\phi}_i^\varepsilon(\{u_{j+i}\}_{j \in Z_\varepsilon(\Omega_i)})=\sum_{j \in Z_\varepsilon(\Omega)} V''\Big(\left| \frac{i-j}{\varepsilon}\right|\Big) \left| \frac{u_i-u_j}{\varepsilon}\right|^2
\end{align*}
may not be positive in general, due to the long-range part of the potential. 
\begin{figure}[htp]
\centering\includegraphics{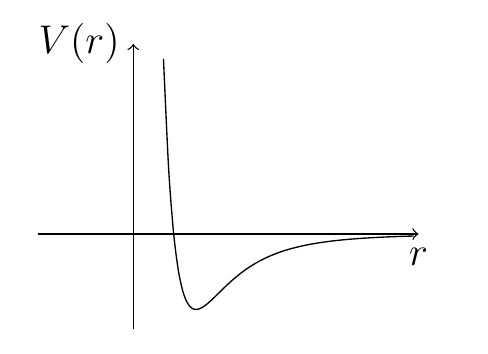}
\caption{The Lennard-Jones potential}
\label{LJ Potential}
\end{figure}
We may regroup the interactions of $\tilde{\phi}_i^\varepsilon$ such that we have a positive potential satisfying the assumptions of our main theorem. For every $\xi \in \mathbb{Z}^3, i \in \mathbb{Z}^N$ we choose a path $\gamma_\xi^i = (i_k)_{h=1}^{||\xi||_1+1}\subset \mathbb{Z}^3$ by defining 
\begin{align*}
i^\xi_{||\xi||_1+1} = i +\xi, i^\xi_1 = i, i_{h+1}^\xi =i_{h}^\xi+e_{n(h)}, e_{n(h)} = \mathrm{sign}(\xi_k) e_k \text{ if } 1+\sum_{n=1}^{k-1} | \xi_n | \leq h \leq \sum_{n=1}^{k} | \xi_n |.
\end{align*}
For this path it holds
\begin{align*}
|D_1^\xi u(i)|^2 \leq  \frac{3}{||\xi||_1}\sum_{h=1}^{||\xi||_1} |D^{e_n(h) }_1 u(i_h)|^2.
\end{align*}
For $\xi \in \mathbb{Z}^3\setminus \{\pm e_1,\pm e_2,\pm e_3\}$ we define $f^\xi_i : \mathcal{A}_1(\mathbb{R}^3;\mathbb{R}^3) \to [0,\infty)$ by
\begin{align*}
f^\xi_i (\{u_j\}_{j \in \mathbb{Z}^3}) = V''(|\xi|)\Big( -|D_1^\xi u(i)|^2 +  \frac{3}{||\xi||_1}\sum_{h=1}^{||\xi||_1} |D^{e_{n(h)} }_1 u(i_h)|^2\Big),
\end{align*}
and for $v \in  \{\pm e_1,\pm e_2,\pm e_3\}$ we define $f^{ v }_i: \mathcal{A}_1(\mathbb{R}^3;\mathbb{R}^3) \to \mathbb{R}$ 
\begin{align*}
f^{ v }_i (\{u_j\}_{j \in \mathbb{Z}^3}) = \Big(V''(1)- \sum_{j \in \mathbb{Z}^3}\underset{i = i_h \in \gamma_j^\xi, e_{n(h)}=v}{\sum_{\xi \in \mathbb{Z}^3}} \frac{3V''(|\xi|)}{||\xi||_1}\Big)|D^{v }_1 u(i)|^2.
\end{align*}
Moreover, we define $\phi_i^k : \mathcal{A}_1(\mathbb{R}^3;\mathbb{R}^3) \to \mathbb{R} $ by
\begin{align*}
\phi^{ k }_i (\{u_j\}_{j \in \mathbb{Z}^3}) = \sum_{ |\xi|_\infty \leq k} f^\xi_i(\{u_j\}_{j \in \mathbb{Z}^3}) 
\end{align*}
and $\phi_i : \mathcal{A}_1(\mathbb{R}^3;\mathbb{R}^3) \to \mathbb{R} $ by 
\begin{align*}
\phi_i (\{u_j\}_{j \in \mathbb{Z}^3}) = \sum_{\xi \in \mathbb{Z}^3} f^\xi_i(\{u_j\}_{j \in \mathbb{Z}^3}).
\end{align*}
We need to check that 
\begin{align}\label{coercivity}
f^{ v }_i (\{u_j\}_{j \in \mathbb{Z}^N}) \geq  c |D^{v }_1 u(i)|^2
\end{align}
for some constant $c>0$, $v \in   \{\pm e_1,\pm e_2,\pm e_3\}$ and that $\phi^k_i, \phi_i$ satisfy (H1)--(H3) and ($\text{H}_p$4)--($\text{H}_p$7). Note that for $u^\varepsilon(j) = \frac{u(\varepsilon j)}{\varepsilon}$ it holds
\begin{align*}
\sum_{i \in Z_\varepsilon(\mathbb{R}^3)}\phi_{\frac{i}{\varepsilon}}(\{u^\varepsilon_j\}_{j \in \mathbb{Z}^3})= \sum_{i \in Z_\varepsilon(\mathbb{R}^3)}\tilde{\phi}_{ i}^\varepsilon(\{u_j\}_{j  \in Z_\varepsilon(\mathbb{R}^3)}) .
\end{align*}
By the definition of $\phi_i^k, \phi_i$ it is clear, that (H1), (H2) holds. To prove (H3) we have that $\phi_i^\xi \geq 0$ for all $\xi \in \mathbb{Z}^3 \setminus \{\pm e_1, \pm e_2, \pm e_3\}$ and by Fubini's Theorem we have that
\begin{align}\label{Cxicomp1}
\sum_{j \in \mathbb{Z}^3}\underset{i = i_h \in \gamma_j^\xi, e_{n(h)}=v}{\sum_{\xi \in \mathbb{Z}^3,|\xi| >1}} \frac{3V''(|\xi|)}{||\xi||_1} = \sum_{\xi \in \mathbb{Z}^3,|\xi|>1} \# N_{i,v}^\xi \frac{3V''(|\xi|)}{||\xi||_1},
\end{align}
where 
$ \displaystyle
N_{i,v}^\xi = \Big\{j \in \mathbb{Z}^3 : \, \exists h \in \{1,\ldots,|\xi_1|\}\text{ such that } i = j_h \in \gamma^\xi_j \text{ and } e_{n(h)}= v  \Big\}.
$ Note that for $\xi \in \mathbb{Z}^3$ such that $\langle \xi, v\rangle >0$ we have  $\# N_{i,v}^\xi \leq ||\xi||_1$ and $\# N_{i,v}^\xi=0$ otherwise. Hence, using the monotonicity of $V''(r)$ for $r \geq \sqrt{2} $ and the fact that $||\xi||_\infty \leq ||\xi||_2$ and using the fact that $\#\{\xi \in \mathbb{Z}^3 : ||\xi||_\infty = k \} = 3k^2-3k+1 $ , we obtain
\begin{align}\label{Cxicomp2}
-\nonumber\underset{|\xi|>1}{\sum_{\xi \in \mathbb{Z}^3}} \# N_{i,v}^\xi \frac{3V''(|\xi|)}{||\xi||_1}  &\leq -\underset{\langle \xi, v\rangle >0}{\sum_{\xi \in \mathbb{Z}^3,|\xi|>1}} 3V''(|\xi|) = -12 V''(\sqrt{2}) - 3\sum_{ k=2}^\infty \sum_{||\xi|_\infty =k} V''(|\xi|)(3k^2-3k+1) \\&\leq  -12 V''(\sqrt{2}) - 3\sum_{ k=2}^\infty V''(k)(3k^2-3k+1) < V''(1).
\end{align}
Hence,  we obtain (\ref{coercivity}) and with that (H3). ($\text{H}_p$4) and ($\text{H}_p$7) follow from the definition of $\phi_i^k$ and $\phi_i$. Setting 
\begin{align}\label{Cj}
C^{j,e_n} = \begin{cases} V''(1) &\text{if }  j=0 \\
\displaystyle\underset{j=i_h,e_{n(h)}=e_n}{\sum_{\xi \in \mathbb{Z}^3,|\xi|>1}} \frac{3 V''(|\xi|)}{||\xi||_1} &\text{otherwise,}
\end{cases}
\end{align}
and $C^{j,\xi}=0$ if $|\xi|>1$. Using (\ref{Cxicomp1}) and (\ref{Cxicomp2}) we obtain (\ref{Assumptions on Cjk}) and
\begin{align*}
\phi_i^k(\{\psi_{j}z_{j} + (1-\psi_{j})w_{j}\}_{j \in \mathbb{Z}^N}) \leq R^k_i(z,w,\psi),
\end{align*}
with $R_i^k$ defined in ($\text{H}_p5$) with $C^{j,\xi}$ defined by (\ref{Cj}). By the non-negativity of $\phi_i^k$ it follows ($\text{H}_p5$). 
Setting 
\begin{align}\label{Cjk}
C^{j,e_n}_k = 2\underset{j=i_h,e_{n(h)}=e_n}{\sum_{\xi \in \mathbb{Z}^3,||\xi||_\infty>k}} \frac{3 V''(|\xi|)}{||\xi||_1}
\end{align}
and $C^{j,\xi}_k=0$ if $|\xi|>1$, using (\ref{Cxicomp1}) and (\ref{Cxicomp2}) we obtain (\ref{Assumptions on Cxik}). We have that
\begin{align*}
|\phi_i^{k_1}(\{z_{j}\}_{j \in \mathbb{Z}^N}) - \phi_i^{k_2}(\{z_{j}\}_{j \in \mathbb{Z}^N})|& = \sum_{\xi \in \mathbb{Z}^3 \cap (Q_{k_2} \setminus Q_{k_1})} f_i^\xi(\{z_j\}_{j \in \mathbb{Z}^3}) \\&\leq \sum_{\xi \in \mathbb{Z}^3 \cap (Q_{k_2} \setminus Q_{k_1})}V''(|\xi|) \frac{3}{||\xi||_1}\sum_{h=1}^{||\xi||_1} |D^{e_{n(h)} }_1 z(i_h^\xi)|^2 \\&\leq \sum_{n=1}^3\sum_{j \in \mathbb{Z}^3 \cap Q_{k_2}} \underset{j=i_h,e_{n(h)} \in \{\pm e_n\}}{\sum_{\xi \in \mathbb{Z}^3,||\xi||_\infty>k_1}} \frac{3 V''(|\xi|)}{||\xi||_1} |D^{e_{n} }_1 z(j)|^2 \\&\leq \underset{j+\xi \in \mathbb{Z}^3 \cap Q_{k_2}}{\sum_{j,\xi \in \mathbb{Z}^3 \cap Q_{k_2} }} C^{j,\xi}_{k_1}  |D^{\xi}_1 z(j)|^2 
\end{align*}
and hence we obtain ($\text{H}_p$6).
Applying Theorem \ref{Homogenization} we obtain the $\Gamma$-convergence of $E_\varepsilon$ to a functional $E : L^p(\Omega;\mathbb{R}^3) \times \mathcal{A}(\Omega) \to [0,+\infty] $ given by
\begin{align*}
E(u,A) = \int_A f_{\mathrm{hom}}(\nabla u) \mathrm{d}x, 
\end{align*}
where $f_{\mathrm{hom}} : \mathbb{R}^{3 \times 3} \to [0,+\infty) $ is given by
\begin{align*}
f_{\mathrm{hom}}(M) = \lim_{L \to \infty} \frac{1}{L^N} \inf \Big\{\sum_{i \in \mathbb{Z}^N \cap Q_L} \phi_{i}(\{z_{j+i}\}_{j \in \mathbb{Z}^N }) : z \in \mathcal{A}^{M,m}_1(Q_L;\mathbb{R}^n) \Big\}.
\end{align*}
\subsection{Pair interactions: the Alicandro-Cicalese theorem}\label{ACt} The compactness theorem can be applied to the special case of pair potentials where $\phi_i^\varepsilon$ takes only into account the pair interactions of that point with every other point $j \in Z_\varepsilon(\Omega)$, that means it is of the form
\begin{align*}
\phi_i^\varepsilon(\{z_{j+i}\}_{j\in Z_\varepsilon(\Omega_i)}) = \underset{i+\varepsilon\xi \in Z_\varepsilon(\Omega)}{\sum_{\xi \in \mathbb{Z}^N}}f^\xi_\varepsilon(i, D^\xi_\varepsilon z(i))
\end{align*}
with $f^\xi_\varepsilon \geq 0$ satisfying
\begin{itemize}
\item[(i)] $f^{e_n}_\varepsilon(i,z) \geq c(|z|^p-1) $ for all $i \in Z_\varepsilon(\Omega)$, $z \in \mathbb{R}^n,\varepsilon >0$ and $n \in \{1,\ldots,N\}$.
\item[(ii)] $f^\xi_\varepsilon(i,z) \leq c^\xi_\varepsilon(|z|^p +1)$ for all $i \in Z_\varepsilon(\Omega)$, $z \in \mathbb{R}^n,\varepsilon >0$ and $\xi \in \mathbb{R}^N$, where
\begin{align} \label{properties Cxi1}
&\limsup_{\varepsilon \to 0}\sum_{\xi \in \mathbb{Z}^N} c^\xi_\varepsilon < +\infty \\ \label{properties Cxi2}\forall \, \delta> 0 \, \exists M_\delta >0 \text{ such that } &\limsup_{\varepsilon \to 0} \sum_{|\xi| > M_\delta} c^\xi_\varepsilon <\delta.
\end{align}
\end{itemize}
(H1) follows since for each $\xi \in \mathbb{R}^N, i \in Z_\varepsilon(\Omega)$ the interaction depend only on $D^\xi_\varepsilon z$. (H2) follows from (\ref{properties Cxi1}) and (ii). (H3) follows from (i). (H4) follows if we choose 
\begin{align*}
C_{\varepsilon,\delta}^{i,\xi} = \begin{cases}  c^\xi_\varepsilon &\varepsilon|\xi|_\infty \geq \delta, i =0 \\   0 &i \neq 0.
\end{cases}
\end{align*}
Let $z,w \in \mathcal{A}_\varepsilon(\Omega;\mathbb{R}^n)$ such that $z(j)=w(j) $ in $Z_\varepsilon(Q_\delta(i))$. 
Then, using the positivity of $f^\xi_\varepsilon$ and (ii), we obtain
\begin{align*}
\phi_i^\varepsilon(\{z_{j}\}_{j\in Z_\varepsilon(\Omega_i)}) &= \underset{i+\varepsilon\xi \in Z_\varepsilon(\Omega_i)}{\sum_{\xi \in \mathbb{Z}^N}}f^\xi_\varepsilon(0, D^\xi_\varepsilon z(0)) =  \underset{\varepsilon\xi \in Z_\varepsilon(\Omega_i)}{\sum_{|\xi|_\infty\varepsilon \leq \delta}}f^\xi_\varepsilon(0, D^\xi_\varepsilon z(0)) + \underset{\varepsilon\xi \in Z_\varepsilon(\Omega_i)}{\sum_{|\xi|_\infty\varepsilon > \delta}}f^\xi_\varepsilon(0, D^\xi_\varepsilon z(0)) \\&\leq \underset{\varepsilon\xi \in Z_\varepsilon(\Omega_i)}{\sum_{|\xi|_\infty\varepsilon \leq \delta}}f^\xi_\varepsilon(0, D^\xi_\varepsilon w(0)) + \underset{\varepsilon\xi \in Z_\varepsilon(\Omega_i)}{\sum_{|\xi|_\infty\varepsilon > \delta}}c^\xi_\varepsilon( |D^\xi_\varepsilon z(0)|^p +1) \\&\leq \phi_i^\varepsilon(\{w_{j}\}_{j\in Z_\varepsilon(\Omega_i)}) +\sum_{j\in Z_\varepsilon(\Omega_i),\xi \in \mathbb{Z}^N}C^{j,\xi}_{\varepsilon,\delta}(|D^\xi_\varepsilon z(j)|^p +1)
\end{align*} 
and therefore (H4) follows. Setting  
\begin{align*}
C^{i,\xi}_{\varepsilon} = \begin{cases} c^\xi_\varepsilon &\text{if }i=0 \\
0 &\text{otherwise}.
\end{cases}
\end{align*}
we have that
\begin{align*}
\phi_i^\varepsilon(\{z_{j}\}_{j \in Z_\varepsilon(\Omega_i)}) \leq \underset{j+\varepsilon\xi \in Z_\varepsilon(\Omega_i)}{\sum_{j \in Z_\varepsilon(\Omega_i),\xi \in \mathbb{Z}^N}} C^{j,\xi}_\varepsilon |D^\xi_\varepsilon z(j)|^p.
\end{align*}
and again for a cut-off function $\psi$ and $z,w \in \mathcal{A}_\varepsilon(\Omega;\mathbb{R}^n)$ (H5) follows by using (\ref{finitedifferenz}), the convexity of $|\cdot|^p$ and (\ref{maxbound}).

\end{document}